\newcommand{\bbR}{{\mathbb{R}}}
\newcommand{\bdone}{{\boldsymbol{1}}}
\newcommand{\lb}{\label}
\newcommand{\f}{\frac}
\newcommand{\ti}{\tilde  }
\newcommand{\tr}{\text{\rm{tr}}}
\newcommand{\s}{\text{\rm{s}}}
\newcommand{\supp}{\text{\rm{supp}}}
\newcommand{\bi}{\bibitem}
\newcommand{\beq}{\begin{equation}}
\newcommand{\eeq}{\end{equation}}
\newcommand{\ba}{\begin{align}}
\newcommand{\ea}{\end{align}}
\newcommand{\veps}{\varepsilon}
\let\det=\undefined\DeclareMathOperator{\det}{det}
\newcounter{smalllist}
\newenvironment{SL}{\begin{list}{{\rm\roman{smalllist})}}{%
\setlength{\topsep}{0mm}\setlength{\parsep}{0mm}\setlength{\itemsep}{0mm}%
\setlength{\labelwidth}{2em}\setlength{\leftmargin}{2em}\usecounter{smalllist}%
}}{\end{list}}
\DeclareMathOperator{\Ima}{Im}
\numberwithin{equation}{section}
\newtheorem{theorem}{Theorem}[section]
\newtheorem*{p2.1}{Proposition 2.1}
\newtheorem{proposition}[theorem]{Proposition}
\newtheorem{lemma}[theorem]{Lemma}
\newtheorem{corollary}[theorem]{Corollary}
\theoremstyle{definition}
\newtheorem{example}[theorem]{Example}
\theoremstyle{remark}
\newtheorem*{remark}{Remark}
\newcommand{\abs}[1]{\lvert#1\rvert}
\begin{document}
\title{Monotone Jacobi Parameters and Non-Szeg\H{o} Weights}
\author[Y.~Kreimer, Y.~Last, and B.~Simon]
{Yury Kreimer$^1$, Yoram Last$^{1,3}$, and Barry Simon$^{2,3}$}

\thanks{$^1$ Institute of Mathematics, The Hebrew University,
91904 Jerusalem, Israel. E-mail: yuryk@math.huji.ac.il;
ylast@math.huji.ac.il. Supported in part by The Israel Science
Foundation (grant no. 1169/06)}

\thanks{$^2$ Mathematics 253-37, California Institute of Technology,
Pasadena, CA 91125, USA.
E-mail: bsimon@caltech.edu. Supported in part by NSF grant DMS-0140592}

\thanks{$^3$ Research supported in part
by Grants No.\ 2002068 and No.\ 2006483 from the United States-Israel Binational Science
Foundation (BSF), Jerusalem, Israel}

\date{November 7, 2007}
\keywords{Orthogonal polynomials, Schr\"odinger operators,
spectral weights, Szeg\H{o} condition}
\subjclass[2000]{33C45,34L05,47B15}

\begin{abstract} We relate asymptotics of Jacobi parameters to asymptotics of the
spectral weights near the edges.
Typical of our results is that for $a_n\equiv 1$, $b_n=-C n^{-\beta}$
($0<\beta< \f23)$, one has $d\mu(x)= w(x)\, dx$ on $(-2,2)$, and near $x=2$, $w(x)=e^{-2Q(x)}$
where
\[
Q(x)=\beta^{-1} C^{\f{1}{\beta}} \, \f{\Gamma(\f32)\Gamma(\f{1}{\beta}-\f12)(2-x)^{\f12 -\f{1}{\beta}}}
{\Gamma(\f{1}{\beta}+1)}\, (1+O((2-x)))
\]
\end{abstract}

\maketitle

%%%%%%%%%%%%%%%%%%%%%%%%%%%%%%%
\section{Introduction} \lb{s1}
%%%%%%%%%%%%%%%%%%%%%%%%%%%%%%%

Since the earliest days of the general theory of orthogonal polynomials on the real line (OPRL),
it has been known that a key role is played by the Szeg\H{o} condition \cite{Sz22a} that if
\begin{equation} \lb{1.1}
d\mu(x)=w(x)\, dx + d\mu_\s
\end{equation}
where $w$ is supported on $[-2,2]$ (we follow the spectral theorists' convention related to
$a_n \to 1$, $b_n\to 0$ rather than the $[-1,1]$ tradition in the OP literature), then
\begin{equation} \lb{1.2}
\int \log(w(x))(4-x^2)^{-\f12}\, dx > -\infty
\end{equation}
In this paper, we will examine asymptotics of $\log(w(x))$ for typical cases where \eqref{1.2}
fails. Recall \cite{Szb,FrB,Chi,OPUC1,Rice} that, given $\mu$, one can define monic orthogonal
and orthonormal polynomials $P_n(x,d\mu)$, $p_n(x,d\mu)$ and Jacobi parameters
$\{a_n,b_n\}_{n=1}^\infty$ by ($b_n$ real, $a_n >0$)
\begin{equation} \lb{1.3}
xp_n(x) =a_{n+1} p_{n+1}(x) + b_{n+1} p_n(x) + a_n p_{n-1}(x)
\end{equation}
and
\begin{equation} \lb{1.3a}
\|P_n\|=a_1 \cdots a_n
\end{equation}
Favard's theorem (see, e.g., \cite{OPUC1,Rice}) asserts a one-one correspondence between $\mu$'s
of compact but infinite support and bounded sets of $a_n$'s and $b_n$'s. Moreover, by Weyl's theorem,
if $a_n\to 1$, $b_n\to 0$, then the essential support of $d\mu$ is $[-2,2]$.

Roughly speaking, the boundary for \eqref{1.2} to hold is $a_n-1$, $b_n$ decaying faster than $O(n^{-1})$.
Explicitly, Killip and Simon \cite{KS} proved a conjecture of Nevai \cite{Nev92} that $\sum_{n=1}^\infty
(\abs{a_n-1} + \abs{b_n})<\infty \Rightarrow$ \eqref{1.2}, and there are examples of Pollaczek
\cite{Po49a,Po50a,Po56} where \eqref{1.2} fails because $\log(w(x))\sim (4-x^2)^{-\f12}$ near $x=\pm 2$
and $b_n=0$, $a_n=1 - Cn^{-1} + O(n^{-2})$.

Killip--Simon \cite{KS} discovered a relevant weaker condition than \eqref{1.2} they called the
quasi-Szeg\H{o} condition:
\begin{equation} \lb{1.4}
\int \log(w(x)) (4-x^2)^{\f12}\, dx > -\infty
\end{equation}
and they proved that
\begin{equation} \lb{1.5}
\text{\eqref{1.4}} + \sum_{x\in\supp(\mu)\setminus [-2,2]}
(\abs{x}-2)^{\f32} <\infty \Leftrightarrow \sum_{n=1}^\infty \, \abs{a_n-1}^2 +
\abs{b_n}^2 < \infty
\end{equation}
Our cases will include situations where \eqref{1.4} and \eqref{1.5} fail.

It is known (see \cite{Kh2001,Lub88,Lub91,MagVan,Sim157,Tom63}) that when $\sum_{n=1}^\infty
\abs{a_n-1}^2 + \abs{b_n}^2 =\infty$, $d\mu$ can stop having an a.c.\ component, so we
will need an additional condition. What we will use is

\begin{theorem}\lb{T1.1} If $a_n\to 1$, $b_n\to 0$, and
\begin{equation} \lb{1.6}
\sum_{n=1}^\infty\, \abs{a_{n+1}-a_n} + \abs{b_{n+1} -b_n} <\infty
\end{equation}
then \eqref{1.1} holds where $w(x)$ is continuous on $(-2,2)$ and strictly positive there.
Moreover, $d\mu_\s$ is supported on $\bbR\setminus (-2,2)$.
\end{theorem}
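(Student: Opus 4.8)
The plan is to realize the condition \eqref{1.6} as a statement that the Jacobi matrix $J$ with parameters $\{a_n,b_n\}$ has bounded variation coefficients, and to exploit the well-developed theory of such perturbations. The key device is a discrete analogue of the WKB/Levinson machinery: write the recursion \eqref{1.3} in transfer-matrix form $\binom{p_n}{p_{n-1}} = T_n(x) \binom{p_{n-1}}{p_{n-2}}$, and observe that for $x\in(-2,2)$ the limiting matrix $T_\infty(x)$ (corresponding to $a_n\equiv 1$, $b_n\equiv 0$) is elliptic with eigenvalues $e^{\pm i\theta}$ on the unit circle, where $x=2\cos\theta$. Since $\sum \abs{a_{n+1}-a_n}+\abs{b_{n+1}-b_n}<\infty$, the sequence $T_n(x)$ has bounded variation and converges to $T_\infty(x)$. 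One then diagonalizes $T_n(x)$ via a slowly varying (bounded variation) similarity and applies a discrete Levinson-type theorem (as in Stolz, or in the bounded-variation analysis going back to Weidmann and to the work surveyed in \cite{Rice}) to conclude that for every $x\in(-2,2)$ there is a basis of solutions $u_n^\pm(x)$ with $u_n^\pm(x) = (\text{bounded, nonvanishing})\cdot \prod_{j} \lambda_j^\pm(x)$ where $\abs{\lambda_j^\pm(x)}=1$; in particular all solutions are bounded and non-$\ell^2$, so $J$ has purely absolutely continuous spectrum on $(-2,2)$ with a spectral density that is continuous and strictly positive there. This gives \eqref{1.1} with $w$ continuous and positive on $(-2,2)$ and $d\mu_\s$ supported off $(-2,2)$ simultaneously.

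Concretely, the cleanest route to the a.c.\ part is through the $m$-function: show that the Weyl solution behaves like the ``decaying'' branch and that $\Ima m(x+i0)$ exists, is continuous, and is strictly positive for $x\in(-2,2)$. The uniformity of the bounded-variation estimates in $x$ on compact subsets of $(-2,2)$ is what upgrades pointwise existence of boundary values to continuity of $w$. Strict positivity follows because the two Levinson solutions remain genuinely linearly independent in the limit (the Wronskian, which equals $\prod a_n^{-1}$ up to constants and hence is bounded away from $0$ and $\infty$ since $a_n\to 1$ with bounded variation implies $\prod a_n$ converges to a finite nonzero limit), so the Weyl circle does not degenerate to a point. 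Finally, that $d\mu_\s$ lives on $\bbR\setminus(-2,2)$ is immediate once we know $w>0$ a.e.\ on $(-2,2)$ and the essential support is $[-2,2]$ (Weyl's theorem, already quoted): boundedness of all transfer-matrix solutions on $(-2,2)$ rules out eigenvalues and singular continuous spectrum inside the band, by the standard subordinacy/Gilbert--Pearson criterion.

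The main obstacle is making the discrete Levinson asymptotics \emph{uniform} in the spectral parameter on compact subintervals of $(-2,2)$, and controlling the behavior as $x\to\pm 2$ where $\theta\to 0$ or $\pi$ and the diagonalizing transformation becomes singular (the elliptic fixed point degenerates to a parabolic one). Uniformity is needed both for continuity of $w$ and to avoid any spurious accumulation of singular spectrum; near the edges one simply excludes a neighborhood of $\pm 2$, which is harmless since the theorem only claims continuity and positivity on the open interval $(-2,2)$ and only claims $d\mu_\s$ is supported in the complement of $(-2,2)$. I would handle the uniformity by carrying the bounded-variation sums with an explicit $x$-dependent but locally bounded constant through the Levinson iteration, using that $\abs{\det T_n} = a_{n-1}/a_n$ and that the eigenvector matrix of $T_n(x)$ depends real-analytically on $(x, a_n, b_n)$ away from the edges, hence is itself of bounded variation with locally uniform bounds. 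Everything else — the passage from bounded solutions to an a.c.\ spectral measure, and from there to the stated structure of $d\mu$ — is then a routine application of subordinacy theory and the theory of the $m$-function.
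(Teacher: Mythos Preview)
The paper does not actually prove Theorem~\ref{T1.1}; it attributes the result to Dombrowski--Nevai \cite{DN} (with antecedents in Weidmann \cite{We67} and related work in \cite{Koo,GN,OPUC2}) and adds only the remark that continuity of $w$ follows from Theorem~1 of \cite{DN} because $w$ arises as a uniform limit of continuous functions on compact subintervals of $(-2,2)$. So there is no detailed in-house proof to compare against.

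That said, your sketch is a reasonable and essentially correct outline, and it is very much in the spirit of the Kooman diagonalization \cite{Koo} that the paper itself deploys in Section~\ref{s2} (Lemmas~\ref{L2.6}--\ref{L2.7}) to control the oscillatory region: write $A_n(x)=Y(\kappa_n)V(\kappa_n)Y(\kappa_n)^{-1}$, bound $\|Y(\kappa_{j+1})^{-1}Y(\kappa_j)-\bdone\|$ by a bounded-variation increment, and telescope. Your emphasis on uniformity in $x$ on compacta of $(-2,2)$ is exactly the right point, and the paper's own use of Carmona's formula (Theorem~\ref{T1.2} and Corollary~\ref{C1.3}) is the natural substitute for your $m$-function route to pass from transfer-matrix bounds to continuity and positivity of $w$.

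One concrete error: your parenthetical that ``$a_n\to 1$ with bounded variation implies $\prod a_n$ converges to a finite nonzero limit'' is false (take $a_n=1-1/\log(n+2)$: then $\sum\abs{a_{n+1}-a_n}<\infty$ but $\prod a_n=0$), and in any case the Wronskian $a_n(u_n v_{n-1}-u_{n-1}v_n)$ for two solutions of \eqref{1.3} is \emph{constant} in $n$, not $\prod a_j^{-1}$. Fortunately this constancy is all you need for the linear-independence argument, so the slip is harmless once corrected.
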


The continuum Schr\"odinger analog of this is a theorem of Weidmann \cite{We67}; for OPRL,
it is due to Dombrowski--Nevai \cite{DN} (see also \cite{Koo,GN,OPUC2}). Most references do
not discuss continuity of $w$ but it holds;
for example, it follows immediately from Theorem~1 of \cite{DN}, since $w$ can be obtained
as a uniform limit of continuous functions on any closed subinterval of $(-2,2)$.

In fact, we will focus on cases where $\{a_n\}$ and $\{b_n\}$ are monotone, so \eqref{1.6} is automatic.
Typical is
\begin{equation} \lb{1.7}
a_n\equiv 1 \qquad b_n = -C n^{-\beta}
\end{equation}
where, roughly speaking, we will prove $w(x)$ is singular at $x=2$ (i.e., the integral in
\eqref{1.4} diverges there) with
\begin{gather}
w(x) = e^{-2Q(x)} \lb{1.8} \\
Q(x) \sim C_1 (2-x)^{\f12 - \f1{\beta}} \lb{1.9}
\end{gather}
Indeed, in Section~\ref{s6}, we will obtain for \eqref{1.7} an asymptotic series for $Q(x)$
near $x=2$ up to terms of $O(\log(2-x))$; see \eqref{6.32}.

Our interest in these problems was stimulated by a recent paper of Levin--Lubinsky \cite{LL07}
and their related earlier works on non-Szeg\H{o} weights \cite{LL94,LL01}. They study the problem
inverse to ours, namely, going from $w$ (or $Q$) to $a_n,b_n$ (which they call $A_n,B_n$).
Unfortunately, they do not obtain even leading order asymptotics for $a_n,b_n$ if $Q(x)$ has
the form \eqref{1.9} but instead require
\begin{equation} \lb{1.10}
Q(x)\sim \exp_k (1-x^2)^{-\alpha}
\end{equation}
with $\exp_k(x)=\exp(\exp_{k-1}(x))$ and $\exp_1(x) = e^x$. We will obtain inverse
results to theirs in Section~\ref{s6}. We note that \cite{LL94} does have asymptotics on
the Rakhmanov--Mhaskar--Saff numbers when \eqref{1.9} holds and that their asymptotics
should be connected to asymptotics of $a_n, b_n$.

It is hard to imagine strict if and only if results on $Q(x)$ to $a_n,b_n$ since there will
typically be side conditions $(a_n,b_n$ monotone and/or convex in $n$ or $Q(x)$ convex) that may
not strictly carry over, but it is comforting (even with side conditions) to get results in
both directions. It would be interesting to show that \eqref{1.8} and \eqref{1.9} (with
extra conditions) lead to estimates on $a_n, b_n$ with $\abs{a_n-1} + \abs{b_n}=O(n^{-\beta})$.
We suspect, with analyticity assumptions on $Q$, that this might be accessible with
Riemann--Hilbert techniques.

Our key to going from $(a_n,b_n)$ to $(w,Q)$ is Carmona's formula that relates $d\mu$ to
the growth of $p_n(x)$, namely,

\begin{theorem}\lb{T1.2} If $p_n$ are the orthonormal polynomials for a measure $d\mu$, then
$d\nu^{(n)}\overset{w}{\longrightarrow}d\mu$ where
\begin{equation} \lb{1.11}
d\nu^{(n)}(x) = \f{dx}{\pi(a_n^2 p_n(x)^2 + p_{n-1}^2 (x))}
\end{equation}
\end{theorem}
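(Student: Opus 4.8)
The plan is to prove Theorem~\ref{T1.2} by identifying $d\nu^{(n)}$ with the average, over the circle of all self-adjoint boundary conditions placed at the $n$-th site, of the spectral measures of the corresponding finite Jacobi matrices, and then observing that such an average must converge to $d\mu$ because the boundary condition recedes to infinity. Assume (without loss) $d\mu(\bbR)=1$ and that the Jacobi matrix $J$ is essentially self-adjoint on the finitely supported sequences $\ell^2_{\mathrm{fin}}$ — automatic when $d\mu$ has compact support, the case of interest here. In the normalization of the transfer matrix that sends $\binom{a_kp_k(x)}{p_{k-1}(x)}$ to $\binom{a_{k+1}p_{k+1}(x)}{p_k(x)}$ — each step having determinant $1$ by \eqref{1.3} — the quantity in \eqref{1.11} is $a_n^2p_n(x)^2+p_{n-1}(x)^2=\bigl\|\,T_n(x)\binom10\,\bigr\|^2$. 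For $\theta\in[0,\pi)$ let $J_n^{(\theta)}$ be the self-adjoint operator obtained from $J$ by cutting the bond $a_{n+1}$ and imposing on the block on $\{1,\dots,n\}$ the boundary condition that $\binom{a_nf_n}{f_{n-1}}$ be parallel to $\binom{-\sin\theta}{\cos\theta}$; its eigenvalues are the zeros of $a_np_n(E)\cos\theta+p_{n-1}(E)\sin\theta$ (for $\theta=0$, the zeros of $p_n$, i.e.\ the plain truncation). With $d\mu_n^{(\theta)}$ its spectral measure for $\delta_1$ and $m_n^{(\theta)}(z)=\jap{\delta_1,(J_n^{(\theta)}-z)^{-1}\delta_1}=\int(x-z)^{-1}\,d\mu_n^{(\theta)}(x)$, Weyl-disk theory gives
\[
m_n^{(\theta)}(z)=-\,\f{a_nq_n(z)\cos\theta+q_{n-1}(z)\sin\theta}{a_np_n(z)\cos\theta+p_{n-1}(z)\sin\theta},
\]
where $q_n$ are the second-kind polynomials, so that the constant Wronskian is $a_n\bigl(p_n(z)q_{n-1}(z)-p_{n-1}(z)q_n(z)\bigr)\equiv -1$.

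\emph{The averaging identity.} The claim is $\f1\pi\int_0^\pi m_n^{(\theta)}(z)\,d\theta=\int(x-z)^{-1}\,d\nu^{(n)}(x)$ for $z\in\bbC_+$. Writing the integrand as $-(A\cos\theta+B\sin\theta)/(C\cos\theta+D\sin\theta)$ with $A=a_nq_n(z)$, $B=q_{n-1}(z)$, $C=a_np_n(z)$, $D=p_{n-1}(z)$, decompose the numerator as $\lambda\,(C\cos\theta+D\sin\theta)+\nu\,(-C\sin\theta+D\cos\theta)$. The first field is the denominator; the second is its $\theta$-derivative, so the $\nu$-part integrates to $\nu\bigl[\log(C\cos\theta+D\sin\theta)\bigr]_0^\pi=\pm i\pi\nu$ — a legitimate branch since $C\cos\theta+D\sin\theta$ avoids $0$ on $[0,\pi]$ when $z\in\bbC_+$. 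One finds $\lambda=(AC+BD)/(C^2+D^2)$ and, by the Wronskian relation, $\nu=(AD-BC)/(C^2+D^2)=\mp1/(a_n^2p_n(z)^2+p_{n-1}(z)^2)$, while $C^2+D^2=a_n^2p_n(z)^2+p_{n-1}(z)^2$. Hence $\f1\pi\int_0^\pi m_n^{(\theta)}(z)\,d\theta$ is a rational function of $z$, Herglotz (an average of Herglotz functions), asymptotic to $-1/z$ at $\infty$, with no real poles, since $a_n^2p_n(x)^2+p_{n-1}(x)^2>0$ for every real $x$ (consecutive orthonormal polynomials have no common zero). Its representing measure is therefore purely absolutely continuous with density $\f1\pi\Ima(\,\cdot\,)(x+i0)=\f1{\pi(a_n^2p_n(x)^2+p_{n-1}(x)^2)}$ — exactly $d\nu^{(n)}$; in particular $d\nu^{(n)}$ is a probability measure.

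\emph{Convergence.} The operator $J_n^{(\theta)}$ differs from $J$ only by a perturbation confined to a window of bounded size around the $n$-th site, uniformly in $\theta$; hence for each fixed $\psi\in\ell^2_{\mathrm{fin}}$ one has $J_n^{(\theta)}\psi=J\psi$ for all large $n$, and since $\ell^2_{\mathrm{fin}}$ is a core for $J$, this gives strong resolvent convergence $J_n^{(\theta)}\to J$. Thus $m_n^{(\theta)}(z)\to m(z):=\int(x-z)^{-1}\,d\mu(x)$ for every $z\in\bbC_+$ and every $\theta$, and since $\abs{m_n^{(\theta)}(z)}\le1/\abs{\Ima z}$ uniformly, dominated convergence gives $\int(x-z)^{-1}\,d\nu^{(n)}(x)=\f1\pi\int_0^\pi m_n^{(\theta)}(z)\,d\theta\to m(z)$ on $\bbC_+$. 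Finally, any vague limit point $\rho$ of the probability measures $d\nu^{(n)}$ satisfies $\int(x-z)^{-1}\,d\rho=m(z)$ (as $(x-z)^{-1}\in C_0(\bbR)$), so $\rho=d\mu$ by Stieltjes inversion; since $\rho(\bbR)=1$ no mass escapes, and therefore $d\nu^{(n)}\to d\mu$ weakly.

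The only real content is the averaging identity; within it the point needing care is purely a bookkeeping matter — choosing the boundary-condition parametrization compatible with the determinant-one transfer matrix above, so that the Wronskian constant equals $\pm 1$ and the combination $a_n^2p_n^2+p_{n-1}^2$ emerges with the precise constant $1/\pi$ in \eqref{1.11}. Steps using Weyl-disk theory, essential self-adjointness, and Herglotz representations are standard and present no obstacle.
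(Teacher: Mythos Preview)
The paper does not actually prove Theorem~\ref{T1.2}; it is quoted as a known result, with citations to Carmona~\cite{Carm} for the continuum analog and to Krutikov--Remling~\cite{KR} and Simon~\cite{S-r47} for the discrete version. Your argument is correct and is essentially the standard proof found in those references: one identifies $d\nu^{(n)}$ as the average, over the circle of self-adjoint boundary conditions at site $n$ (equivalently, rank-one perturbations of the truncated matrix), of the associated spectral measures; computes the averaged Stieltjes transform via the constant Wronskian of $p$ and $q$; and passes to the limit by strong resolvent convergence together with dominated convergence in $\theta$. The sign ambiguity you flag in evaluating $[\log(C\cos\theta+D\sin\theta)]_0^\pi$ is indeed resolved, as you say, by the requirement that the averaged $m$-function be Herglotz, and the remaining steps (Stieltjes inversion, tightness from $\rho(\bbR)=d\mu(\bbR)=1$) are routine.
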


The continuum analog of this result is due to Carmona \cite{Carm}. This theorem when $a_n=1$
is stated without proof in Last--Simon \cite{S263} and later (with proof) in Krutikov--Remling
\cite{KR} and Simon \cite{S-r47}. It implies:

\begin{corollary}\lb{C1.3} Suppose uniformly on some interval $[\alpha,\beta]$, we have for
strictly positive continuous functions $f_\pm(x)$ that
\begin{equation}\lb{1.12}
\begin{aligned}
\pi^{-1} f_-(x) &\leq \liminf (a_n^2 p_n(x)^2 + p_{n-1}(x)^2) \\
&\leq \limsup(a_n^2 p_n(x)^2 + p_{n-1}(x)^2) \leq \pi^{-1} f_+(x)
\end{aligned}
\end{equation}
Then $d\mu$ is purely absolutely continuous on $(\alpha,\beta)$ and
\begin{equation} \lb{1.13}
\f{1}{f_+(x)} \leq w(x) \leq \f{1}{f_-(x)}
\end{equation}
there. In particular, if \eqref{1.12} holds for each compact interval $[\alpha,\beta]$ in $(x_0,2)$,
\begin{equation} \lb{1.14}
f_\pm(x) =\exp(2(g(x)\pm h(x)))
\end{equation}
then \eqref{1.8} holds with
\begin{equation} \lb{1.15}
\abs{Q(x)-g(x)}\leq h(x)
\end{equation}
\end{corollary}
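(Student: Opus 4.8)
The plan is to run everything through Carmona's formula (Theorem~\ref{T1.2}) and transfer the pointwise control of $a_n^2 p_n^2 + p_{n-1}^2$ into control of the limiting measure. Write $S_n(x) = a_n^2 p_n(x)^2 + p_{n-1}(x)^2$, so that $d\nu^{(n)} = (\pi S_n)^{-1}\, dx$ and $d\nu^{(n)}$ converges weakly to $d\mu$. Since $f_\pm$ are continuous and strictly positive on the compact interval $[\alpha,\beta]$, they are bounded above and below by positive constants there; combined with the uniform $\limsup$/$\liminf$ bounds in \eqref{1.12}, for every $\veps>0$ there is an $N$ so that for all $n\ge N$ and all $x\in[\alpha,\beta]$,
\[
\f{1}{f_+(x)} - \veps \;\le\; \f{1}{\pi S_n(x)} \;\le\; \f{1}{f_-(x)} + \veps .
\]
Here the upper $\limsup$ bound on $S_n$ yields the left inequality and the lower $\liminf$ bound yields the right one, boundedness of $f_\pm$ turning additive errors in $S_n$ into additive errors of the same order in the reciprocal.

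Next I would test weak convergence against nonnegative $\phi\in C_c((\alpha,\beta))$; the compact support inside the open interval is exactly what lets me invoke the density bounds above on all of $\supp\phi$. For such $\phi$ and $n\ge N$,
\[
\int \phi(x)\Bigl(\f{1}{f_+(x)}-\veps\Bigr)dx \;\le\; \int \phi\, d\nu^{(n)} \;\le\; \int \phi(x)\Bigl(\f{1}{f_-(x)}+\veps\Bigr)dx ,
\]
and letting $n\to\infty$ (using $\int\phi\,d\nu^{(n)}\to\int\phi\,d\mu$) and then $\veps\downarrow 0$ gives
\[
\int \f{\phi(x)}{f_+(x)}\,dx \;\le\; \int \phi\, d\mu \;\le\; \int \f{\phi(x)}{f_-(x)}\,dx .
\]
The right-hand inequality, applied to $\phi$'s that approximate indicators of compact subsets of $(\alpha,\beta)$, shows that every Lebesgue-null Borel subset of $(\alpha,\beta)$ is $\mu$-null; by outer regularity, $\mu\restriction(\alpha,\beta)$ is then purely absolutely continuous, so $d\mu = w(x)\,dx$ there, and both displayed inequalities now read $\int \phi/f_+ \le \int \phi\, w \le \int \phi/f_-$ for all $\phi\ge0$, whence $1/f_+(x)\le w(x)\le 1/f_-(x)$ a.e.\ on $(\alpha,\beta)$ (and everywhere once $w$ is taken to be the continuous representative supplied by Theorem~\ref{T1.1}).

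Finally, the statement surrounding \eqref{1.14}--\eqref{1.15} is immediate: running the above with $[\alpha,\beta]$ ranging over compact subintervals of $(x_0,2)$ and $f_\pm = \exp(2(g\pm h))$ gives $e^{-2(g(x)+h(x))}\le w(x)\le e^{-2(g(x)-h(x))}$ on $(x_0,2)$, so with $w=e^{-2Q}$ one has $g(x)-h(x)\le Q(x)\le g(x)+h(x)$, i.e.\ $\abs{Q(x)-g(x)}\le h(x)$. The only points that call for any care are the interchange of limit and integral, which is handled by the uniform two-sided density bounds, and the measure-theoretic step that domination of $\mu$ by an absolutely continuous measure on an open interval forces pure absolute continuity there; I do not expect a genuinely hard step, as the content is carried entirely by Carmona's formula.
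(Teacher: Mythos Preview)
Your proof is correct and follows essentially the same route as the paper: pass the uniform two-sided bounds on $S_n$ through Carmona's formula (Theorem~\ref{T1.2}) by testing against nonnegative continuous functions supported in $(\alpha,\beta)$, obtain $\int \phi/f_+\le\int\phi\,d\mu\le\int\phi/f_-$, and read off absolute continuity and \eqref{1.13}; the paper does exactly this in one line, while you spell out the $\veps$-argument and the measure-theoretic passage to a.e.\ density bounds. One cosmetic remark: your parenthetical appeal to Theorem~\ref{T1.1} for a continuous representative of $w$ is not part of the hypotheses of Corollary~\ref{C1.3} itself (which only yields the bounds a.e.), though it is available in all the applications in the paper.
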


\begin{proof} By Theorem~\ref{T1.1}, for any positive continuous function, $\eta(x)$, on $[\alpha,\beta]$
supported on $(\alpha,\beta)$, we have
\begin{equation} \lb{1.15x}
\int \f{\eta(x)}{\pi f_+ (x)}\, dx \leq \int \eta(x)\, d\mu(x) \leq \int \f{\eta(x)}{\pi f_-(x)}\, dx
\end{equation}
from which absolute continuity of $\mu\restriction (\alpha,\beta)$ and \eqref{1.13} are immediate.
This in turn implies \eqref{1.14} and \eqref{1.15}.
\end{proof}

Thus, we need to show $a_n^2 p_n^2 + p_{n-1}^2$ is bounded as $n\to\infty$, but with bounds that diverge
as $x\uparrow 2$. The difference equation is
\begin{align}
\binom{p_{n+1}}{a_{n+1} p_n}
&= \f{1}{a_{n+1}} \begin{pmatrix} x-b_{n+1} & -1 \\
a_{n+1}^2 & 0 \end{pmatrix}
\binom{p_n}{a_n p_{n-1}} \notag \\
&\equiv A_{n+1}(x) \binom{p_n}{a_n p_{n-1}} \lb{1.16}
\end{align}
Here
\begin{equation} \lb{1.17}
\det (A_n)=1 \qquad \tr(A_n) = x-b_n
\end{equation}

In a case like \eqref{1.7} where $b_n$ is negative and monotone increasing, a fundamental object is the
turning point, the integer, $N(x)$, with
\begin{alignat}{2}
x-b_n &\geq 2 \qquad && \text{if } n\leq N(x) \lb{1.18} \\
x-b_n &<2 \qquad && \text{if } n > N(x) \lb{1.19}
\end{alignat}
If $\gamma_n(x)$ is defined by $\gamma_n\geq 0$ and
\begin{equation} \lb{1.20}
x-b_n =2\cosh (\gamma_n(x)) \qquad (n\leq N(x))
\end{equation}
then one expects some kind of exponential growth as $\exp (\sum_{j=1}^n \gamma_j(x))$, and we will prove
that
\begin{equation} \lb{1.21}
\exp\biggl(\, \sum_{j=1}^N \gamma_j(x)\biggr) \leq p_N(x) \leq (N+1)
\exp\biggl(\, \sum_{j=1}^N \gamma_j(x)\biggr)
\end{equation}

As one expects, there is an intermediate region $N(x)\leq n\leq N_1(x)$ and an oscillatory region
$n\geq N_1(x)$. We will see that so long as one is willing to accept $O((b_{N+2}-b_{N+1})^{-1})$
errors (and they will typically be very small compared to $\exp(\sum_{j=1}^N \gamma_j(x))$), one can
actually take $N_1 =N+2$ (!) and use the method of proof for Theorem~\ref{T1.1} to control the region
$n\geq N_1$. Thus, the key will be \eqref{1.21} and we will get \eqref{1.15} where
\begin{equation} \lb{1.22}
g(x) =\sum_{j=1}^N \gamma_j(x)
\end{equation}
and
\begin{equation} \lb{1.23}
h(x) =O(\max(\log(N), \log ((b_{N+2}-b_{N+1})^{-1})))
\end{equation}

The discussion of turning points sounds like WKB---and the reader might wonder if one can't obtain our
result via standard WKB techniques. There is some literature on discrete WKB \cite{GS92,SV92a,SV92b,SV94},
but we have not seen how to apply them to this situation (for a different application to OPRL, see
\cite{GSVA}) or, because of a double $n\to\infty$, $x\to 2$ limit, how to use the continuum WKB theory
(on which there is much more extensive literature) to the continuum analog of our problem here. That said,
the current paper should be regarded as a WKB-like analysis.

In Section~\ref{s2}, we discuss the case $a_n\equiv 1$, $b_n < b_{n+1} <0$. In Section~\ref{s3}, we
discuss $b_n\equiv 0$, $a_n < a_{n+1} <1$. It is likely one could handle mixed $a_n,b_n$ cases with
more effort. In Section~\ref{s4}, we discuss some Schr\"odinger operators. Finally, in
Section~\ref{s6}, we discuss examples including \eqref{1.7} and \eqref{1.10}.

\medskip
It is a pleasure to thank Fritz Gesztesy, Uri Kaluzhny, and Doron Lubinsky for useful discussions.
B.~S. would like to thank Ehud de Shalit for the hospitality of the Einstein
Institute of Mathematics at the Hebrew University where some of this work was done.
Y.~L. would like to thank Matthias Flach for the hospitality of 
the Department of Mathematics at Caltech where some of this work was done.

%%%%%%%%%%%%%%%%%%%%%%%%%%%%%%%
\section{Monotone $b_n$} \lb{s2}
%%%%%%%%%%%%%%%%%%%%%%%%%%%%%%%

In this section, we will prove:

\begin{theorem}\lb{T2.1} Let $d\mu$ be the spectral measure associated with a Jacobi matrix
having
$a_n\equiv 1$ and
\[
b_n\leq b_{n+1} < 0 \qquad
b_n\to 0 \text{ as } n\to\infty
\]
Define $N(x)$ for $x$ in $(0,2)$ and near $2$ by \eqref{1.18}/\eqref{1.19} and $\gamma_n(x)$
by \eqref{1.20}. Then $d\mu$ is purely absolutely continuous on $(-2,2)$,
where $w = \frac{d\mu}{dx}$ is continuous
and nonvanishing on $(-2,2)$,
\begin{equation} \lb{2.1}
C_1 (x+2) \leq w(x)\leq C_2 (x+2)^{-1} \quad\text{for } x\in (-2,0]
\end{equation}
and on $(0,2)$,
\begin{equation} \lb{2.2}
w(x)=e^{-2Q(x)}
\end{equation}
where
\begin{equation} \lb{2.3}
\abs{Q(x)-g(x)}\leq h(x)
\end{equation}
where
\begin{equation} \lb{2.4}
g(x) =\sum_{j=1}^{N(x)} \gamma_j(x)
\end{equation}
and $h(x)$ is given by
\begin{equation} \lb{2.5}
e^{h(x)} =CN(x) (b_{N(x)+2}-b_{N(x)+1})^{-1} (2-x)^{\f12}
\end{equation}
for an explicit constant $C$ {\rm{(}}dependent on $\sup\abs{b_n}$ but not on $x${\rm{)}}.
\end{theorem}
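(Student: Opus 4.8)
The whole statement will be deduced from Corollary~\ref{C1.3}: since $a_n\equiv1$, everything reduces to two-sided bounds on $p_n(x)^2+p_{n-1}(x)^2$, uniform on compact subintervals of $(0,2)$ and of $(-2,0]$, with the edge behaviour at $\pm2$ encoded in the constants. Following the road map of the introduction, I would iterate \eqref{1.16} through three regions: the forbidden region $n\le N(x)$, the two-step intermediate region $n=N(x)+1,N(x)+2$, and the oscillatory region $n\ge N(x)+2$.

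\emph{Forbidden region.} Here the goal is \eqref{1.21}. Since $b_n$ is nondecreasing, $\gamma_n(x)$ is nonincreasing in $n$, and because $p_0=1$, $p_1=x-b_1=2\cosh\gamma_1$, one checks by induction that $(p_n,p_{n-1})$ stays in the positive quadrant with ratio $r_n:=p_{n-1}/p_n\le e^{-\gamma_n}$: indeed $r_1=(2\cosh\gamma_1)^{-1}\le e^{-\gamma_1}$, and if $r_n\le e^{-\gamma_n}\le e^{-\gamma_{n+1}}$ then $p_{n+1}=2\cosh\gamma_{n+1}\,p_n-p_{n-1}\ge(2\cosh\gamma_{n+1}-e^{-\gamma_{n+1}})p_n=e^{\gamma_{n+1}}p_n>0$, so $r_{n+1}\le e^{-\gamma_{n+1}}$. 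Telescoping $p_j\ge e^{\gamma_j}p_{j-1}$ gives the lower bound $p_N\ge\exp(\sum_{j=1}^N\gamma_j)$. For the upper bound I would pass to the renormalised quantity $q_n:=p_n\exp(-\sum_{j\le n}\gamma_j)$, which satisfies $q_0=1$ and $q_{n+1}-q_n=e^{-\gamma_{n+1}}(e^{-\gamma_{n+1}}q_n-e^{-\gamma_n}q_{n-1})$; from this one reads off that $q_n$ is nondecreasing, and -- this is where monotonicity of $b_n$ is used most heavily -- that the increments $q_{n+1}-q_n$ remain bounded (by $1$ in the constant-$\gamma$ case, where $q_n=\sum_{k=0}^n e^{-2k\gamma}$), whence $q_N=O(N)$, i.e.\ the upper half of \eqref{1.21}. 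A convenient bookkeeping device here is $E_n:=p_{n+1}^2-(x-b_{n+1})p_{n+1}p_n+p_n^2$, the quadratic form frozen by $A_{n+1}$, which satisfies $E_n-E_{n-1}=(b_{n+1}-b_n)p_np_{n-1}\ge0$ and $E_0=1$.

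\emph{Intermediate and oscillatory regions.} For $n=N+1,N+2$ the values $x-b_n$ lie in a fixed bounded set, so $A_{N+1}(x),A_{N+2}(x)$ and (as $\det=1$) their inverses have norms bounded uniformly in $x$; hence $p_{N+2}^2+p_{N+1}^2$ and $p_N^2+p_{N-1}^2$ agree up to an absolute factor, and \eqref{1.21} transfers to $n=N+2$. For $n\ge N+2$ write $x-b_n=2\cos\theta_n$ with $\theta_n\downarrow\theta_\infty=\arccos(x/2)$ and run the Pr\"ufer/EFGP analysis underlying Theorem~\ref{T1.1}: since $\{b_n\}$ has total variation $\le\sup|b_n|<\infty$, the modulus $R_n$ that is frozen in the free case (with $R_n^2\sin^2\theta_n\asymp p_n^2+p_{n-1}^2$) converges, and $|\log R_\infty-\log R_{N+2}|$ is controlled by $C\sum_{n\ge N+2}|b_{n+1}-b_n|\sin^{-2}\theta_n$, whose dominant term sits at $n=N+2$ where $\sin\theta_{N+2}\gtrsim(b_{N+2}-b_{N+1})^{1/2}$. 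Carmona's formula (Theorem~\ref{T1.2}), through the phase-averaging that turns $d\nu^{(n)}$ into $d\mu$, gives $w(x)\asymp\sin\theta_\infty\,R_\infty^{-2}$; combining with $R_\infty^2\asymp R_{N+2}^2\asymp(p_{N+2}^2+p_{N+1}^2)/\sin^2\theta_{N+2}$ and $\sin\theta_\infty=\tfrac12\sqrt{4-x^2}\asymp(2-x)^{1/2}$ near $x=2$, one gets $w(x)=e^{-2Q(x)}$ with $|Q(x)-g(x)|\le h(x)$, where $e^{h(x)}$ collects exactly three factors -- the $O(N(x))$ from \eqref{1.21}, the $(b_{N+2}-b_{N+1})^{-1}$ from the $p_n^2+p_{n-1}^2\leftrightarrow R_n^2$ conversion at the turning angle, and $(2-x)^{1/2}$ from $\sin\theta_\infty$ -- which is \eqref{2.2}--\eqref{2.5} via \eqref{1.14}--\eqref{1.15}. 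Finally, for $x\in(-2,0]$ one needs only the crude bounds \eqref{2.1}: there may still be a turning point (if $b_1$ is very negative), but the identical oscillatory analysis at the left edge, where $\sin\theta_\infty\asymp(x+2)^{1/2}$ and $R_\infty$ lies between absolute constants times bounded powers of $(x+2)$, yields \eqref{2.1}.

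I expect the main obstacle to be the uniform-in-$x$ bookkeeping through the turning point: proving the \emph{polynomial} (not exponential) upper bound in \eqref{1.21}, and then matching the various $(b_{N+2}-b_{N+1})^{-1}$ and $(2-x)^{1/2}$ factors that are generated exactly where $A_n(x)$ degenerates toward a parabolic matrix, so that nothing worse than \eqref{2.5} survives -- equivalently, carrying the Pr\"ufer estimate of Theorem~\ref{T1.1} with an \emph{explicit} constant down to $x$ arbitrarily close to $2$, rather than merely on a fixed compact subinterval.
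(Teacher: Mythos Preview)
Your architecture matches the paper's --- split at $N(x)$, renormalize to $q_n=p_n e^{-\sum\gamma_j}$ in the forbidden region, and control $n\ge N(x)+2$ by a variation-of-constants/Pr\"ufer argument (the paper's primary tool is Kooman's matrix diagonalization $A_n=Y(\kappa_n)V(\kappa_n)Y(\kappa_n)^{-1}$, but it offers your $\Phi_n=p_n-e^{-i\kappa_n}p_{n-1}$ as an explicit alternative). There are, however, two genuine gaps. First, the upper bound $q_N\le N+1$: the recursion $q_{n+1}-q_n=e^{-2\gamma_{n+1}}(q_n-q_{n-1})+e^{-\gamma_{n+1}}(e^{-\gamma_{n+1}}-e^{-\gamma_n})q_{n-1}$ has a nonnegative second term proportional to $q_{n-1}$, so ``increments remain bounded'' does not follow outside the constant-$\gamma$ case you cite. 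The paper introduces the auxiliary quantity $W_n=e^{\gamma_{n+1}}q_n-e^{-\gamma_n}q_{n-1}$, shows $W_n\le e^{\gamma_{n+1}}$ inductively (this is where monotonicity of $\gamma_n$ bites), and then reads off $q_{n+1}=e^{-\gamma_{n+2}}W_{n+1}+e^{-(\gamma_{n+1}+\gamma_{n+2})}q_n\le 1+q_n$. Your $E_n$ is monotone the wrong way for this step.

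Second, and more seriously, your claim that $\sum_{n\ge N+2}|b_{n+1}-b_n|\sin^{-2}\theta_n$ has its ``dominant term at $n=N+2$'' is false, and a crude bound based on it is fatal: replacing every $\sin^{-2}\theta_n$ by $\sin^{-2}\theta_{N+2}$ gives $|b_{N+2}|/(b_{N+2}-b_{N+1})$, which for $b_n=-Cn^{-\beta}$ is of order $N(x)$ and would make $h$ swamp $g$. What actually controls the oscillatory product is a \emph{telescoping}: in the $\kappa$-variable one has $1+(\kappa_{j+1}-\kappa_j)/\kappa_j=\kappa_{j+1}/\kappa_j$, so $\prod_{j\ge N+2}\bigl(1+|\kappa_{j+1}-\kappa_j|/\sin\kappa_j\bigr)$ collapses to $\kappa_\infty/\kappa_{N+2}$ times a bounded correction coming from $\tfrac{1}{\sin\kappa}-\tfrac{1}{\kappa}$. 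The paper explicitly remarks that the coefficient $1$ (not a generic $C$) in front of $|\kappa_{j+1}-\kappa_j|/\sin\kappa_j$ is critical for this to work. Then $\kappa_\infty\sim(2-x)^{1/2}$, the inequality $2(1-\cos\kappa_{N+2})>b_{N+2}-b_{N+1}$ gives $\kappa_{N+2}^2\gtrsim b_{N+2}-b_{N+1}$, and together with the endpoint factor $\|Y(\kappa_{N+2})^{-1}\|\sim\sin^{-1}\kappa_{N+2}$ one obtains exactly \eqref{2.5}. At the left edge the same telescoping runs with $\theta_j$ \emph{decreasing}; after the symmetry $x\mapsto-x$ one has $x+b_n<x<2$ for every $n$, so no turning point appears there.
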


\begin{remark} Typically, $h$ is much smaller than $g$. For example, if $b_n$ is given by \eqref{1.7},
$g(x) = O ((2-x)^{\f12 - \f{1}{\beta}})$ and $e^{h(x)} =O(N(x)^{2+\beta}(2-x)^{\f12})=
O((2-x)^{-(\f12 +\f{2}{\beta})})$, so $h(x)=O(\log(2-x)^{-1})$.
\end{remark}

As we explained in the introduction, we need to study the asymptotics of $p_n(x)$ as $x\uparrow 2$ with
some uniformity in $n$. Given that $a_n\equiv 1$,
\begin{gather}
p_{n+1}(x)=(e^{\gamma_{n+1}} + e^{-\gamma_{n+1}}) p_n(x) -p_{n-1}(x) \lb{2.6} \\
p_{-1}(x)=0 \qquad p_0(x) =1 \lb{2.7}
\end{gather}
which suggests we define for $n\leq N(x)$,
\begin{equation} \lb{2.8}
\psi_n(x)=e^{-\sum_{j=1}^n \gamma_j} p_n(x)
\end{equation}
so $\psi_n$ obeys
\begin{gather}
\psi_{n+1}(x) = (1+e^{-2\gamma_{n+1}})\psi_n -e^{-(\gamma_n + \gamma_{n+1})} \psi_{n-1} \lb{2.8a} \\
\psi_{-1}(x) =0 \qquad \psi_0(x)=1 \lb{2.9}
\end{gather}

\begin{lemma}\lb{L2.2}
For $0\leq n < N(x)$,
\begin{equation} \lb{2.10}
\psi_{n+1} \geq \psi_n
\end{equation}
In particular,
\begin{equation} \lb{2.11}
\psi_n(x) \geq 1
\end{equation}
\end{lemma}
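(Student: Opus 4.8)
The plan is to prove \eqref{2.10} by induction on $n$, and to read off \eqref{2.11} as an immediate consequence of \eqref{2.10} together with the initial condition $\psi_0 = 1$ (and $\psi_{-1} = 0$). For the base case $n = 0$: from \eqref{2.8a} with $n = 0$ and \eqref{2.9} we get $\psi_1 = (1 + e^{-2\gamma_1})\psi_0 = 1 + e^{-2\gamma_1} \geq 1 = \psi_0$, since $\gamma_1 \geq 0$. So the inequality holds at $n = 0$, and in fact strictly.

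For the inductive step, assume $\psi_n \geq \psi_{n-1} \geq 0$ for some $n$ with $1 \leq n < N(x)$ (the nonnegativity comes along for free once we know $\psi_0 = 1 \geq 0$ and the chain of inequalities back to it). I want to show $\psi_{n+1} \geq \psi_n$. Rewrite the recursion \eqref{2.8a} as
\[
\psi_{n+1} - \psi_n = e^{-2\gamma_{n+1}}\psi_n - e^{-(\gamma_n + \gamma_{n+1})}\psi_{n-1}
= e^{-\gamma_{n+1}}\bigl( e^{-\gamma_{n+1}}\psi_n - e^{-\gamma_n}\psi_{n-1}\bigr).
\]
So it suffices to show $e^{-\gamma_{n+1}}\psi_n \geq e^{-\gamma_n}\psi_{n-1}$. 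Here is where monotonicity of $b_n$ enters: since $b_n \leq b_{n+1}$, we have $x - b_{n+1} \leq x - b_n$, hence $\cosh\gamma_{n+1} \leq \cosh\gamma_n$, and therefore $\gamma_{n+1} \leq \gamma_n$ (both are $\geq 0$), giving $e^{-\gamma_{n+1}} \geq e^{-\gamma_n}$. Combined with $\psi_n \geq \psi_{n-1} \geq 0$ from the inductive hypothesis, this yields $e^{-\gamma_{n+1}}\psi_n \geq e^{-\gamma_n}\psi_{n-1}$, which is exactly what was needed. This closes the induction and proves \eqref{2.10} for all $0 \leq n < N(x)$; then \eqref{2.11} follows since $\psi_n \geq \psi_{n-1} \geq \cdots \geq \psi_0 = 1$.

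I do not expect a genuine obstacle here — this is a clean two-term-recursion positivity argument of the kind familiar from discrete Sturm–Liouville theory. The one point requiring care is making sure the inductive hypothesis is the right one: carrying \emph{both} $\psi_n \geq \psi_{n-1}$ \emph{and} the sign information $\psi_{n-1} \geq 0$ through the induction, since the step multiplies $\psi_{n-1}$ by a positive factor and subtracts, so without nonnegativity of $\psi_{n-1}$ the comparison $e^{-\gamma_{n+1}}\psi_n \ge e^{-\gamma_n}\psi_{n-1}$ could fail. One should also note that $N(x)$ is defined by \eqref{1.18}/\eqref{1.19} so that $\gamma_j(x)$ is well-defined and $\geq 0$ for all $j \leq N(x)$, which is exactly the range in which the recursion \eqref{2.8a} and the inequalities are being used.
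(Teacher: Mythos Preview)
Your proof is correct and follows essentially the same approach as the paper: both argue by induction, both use that $b_n\leq b_{n+1}$ forces $\gamma_{n+1}\leq\gamma_n$, and both carry the nonnegativity of $\psi_{n-1}$ through the induction. The only cosmetic difference is in how the expression $\psi_{n+1}-\psi_n = e^{-2\gamma_{n+1}}\psi_n - e^{-(\gamma_n+\gamma_{n+1})}\psi_{n-1}$ is regrouped: the paper writes it as $e^{-2\gamma_{n+1}}(\psi_n-\psi_{n-1}) + e^{-\gamma_{n+1}}(e^{-\gamma_{n+1}}-e^{-\gamma_n})\psi_{n-1}$ (two manifestly nonnegative summands), while you factor it as $e^{-\gamma_{n+1}}(e^{-\gamma_{n+1}}\psi_n - e^{-\gamma_n}\psi_{n-1})$ and compare the two products directly --- same content.
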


\begin{proof} As a preliminary, we note that $b_n\leq b_{n+1}$ implies $x-b_n \geq x-b_{n+1}$, so
\begin{equation} \lb{2.12}
0\leq \gamma_{n+1}\leq \gamma_n
\end{equation}

By \eqref{2.8a},
\begin{align}
(\psi_{n+1}-\psi_n) &= e^{-2\gamma_{n+1}} \psi_n -e^{-(\gamma_n +\gamma_{n+1})}\psi_{n-1} \notag \\
&= e^{-2\gamma_{n+1}} (\psi_n-\psi_{n-1}) + e^{-\gamma_{n+1}}(e^{-\gamma_{n+1}}-e^{-\gamma_n})
\psi_{n-1} \lb{2.13}
\end{align}
For $n=0$, $\psi_n-\psi_{n-1} =1\geq 0$ and $\psi_{n-1}=0\geq 0$. By \eqref{2.13} and \eqref{2.12}
(which implies $e^{-\gamma_{n+1}}-e^{-\gamma_n}\geq 0$), we see inductively that $\psi_{n+1} -\psi_n
\geq 0$, and so, $\psi_{n+1}\geq \psi_n\geq 0$, proving \eqref{2.10}.
\end{proof}

\begin{lemma}\lb{L2.3} Define for $n=0,1,2,\dots,N(x) - 1$,
\begin{equation} \lb{2.14}
W_n =e^{\gamma_{n+1}} \psi_n - e^{-\gamma_n} \psi_{n-1}
\end{equation}
Then
\begin{equation} \lb{2.15}
W_n\leq e^{\gamma_{n+1}}
\end{equation}
\end{lemma}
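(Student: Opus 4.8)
The plan is to prove \eqref{2.15} by induction on $n$, after rewriting the three-term recursion \eqref{2.8a} as a \emph{first-order} recursion for the quantity $W_n$. The base case $n=0$ is immediate from \eqref{2.14} and \eqref{2.9}: $W_0 = e^{\gamma_1}\psi_0 - e^{-\gamma_0}\psi_{-1} = e^{\gamma_1}$, so \eqref{2.15} holds with equality.

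For the inductive step I would first use the definition \eqref{2.14} to solve for $\psi_{n-1}$ in terms of $\psi_n$ and $W_n$ and substitute into \eqref{2.8a}; the $\psi_n$ terms should combine so that this collapses to
\[
\psi_{n+1} = e^{-2\gamma_{n+1}}\psi_n + e^{-\gamma_{n+1}}W_n .
\]
Plugging this into $W_{n+1} = e^{\gamma_{n+2}}\psi_{n+1} - e^{-\gamma_{n+1}}\psi_n$ then gives the first-order recursion
\[
W_{n+1} = \bigl(e^{\gamma_{n+2}-2\gamma_{n+1}} - e^{-\gamma_{n+1}}\bigr)\psi_n + e^{\gamma_{n+2}-\gamma_{n+1}}W_n .
\]
Now the coefficient of $\psi_n$ is $\le 0$: by \eqref{2.12} applied with $n$ replaced by $n+1$ one has $\gamma_{n+2}\le\gamma_{n+1}$, so $\gamma_{n+2}-2\gamma_{n+1}\le -\gamma_{n+1}$ and hence $e^{\gamma_{n+2}-2\gamma_{n+1}}\le e^{-\gamma_{n+1}}$; since $\psi_n\ge 0$ by Lemma~\ref{L2.2}, the first term drops out. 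The factor $e^{\gamma_{n+2}-\gamma_{n+1}}$ is positive, so multiplying the inductive hypothesis $W_n\le e^{\gamma_{n+1}}$ by it gives $e^{\gamma_{n+2}-\gamma_{n+1}}W_n\le e^{\gamma_{n+2}}$. Adding, $W_{n+1}\le e^{\gamma_{n+2}}$, which closes the induction.

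The essential step---and the only one with any content---is finding that reduction of order, i.e.\ recognizing that $W_n$ is the natural ``integrating-factor'' variable turning \eqref{2.8a} into a first-order equation; everything after that is sign-bookkeeping, powered by the monotonicity \eqref{2.12} of the $\gamma_j$ and the positivity of $\psi_n$ from Lemma~\ref{L2.2} (and one does not even need $W_n\ge 0$, though that also follows from \eqref{2.8a}). The one thing to be careful about is the index range: the induction runs over $n = 0,1,\dots,N(x)-2$, which is precisely where $\gamma_{n+2}$ is defined and \eqref{2.12} applies, and this matches the range of $n$ for which \eqref{2.15} is asserted.
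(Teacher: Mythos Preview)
Your proof is correct and follows essentially the same route as the paper: both establish the base case $W_0=e^{\gamma_1}$, rewrite \eqref{2.8a} as $\psi_{n+1}=e^{-\gamma_{n+1}}W_n+e^{-2\gamma_{n+1}}\psi_n$, derive the same first-order recursion for $W_{n+1}$ (your coefficient $e^{\gamma_{n+2}-2\gamma_{n+1}}-e^{-\gamma_{n+1}}$ equals the paper's $e^{-\gamma_{n+1}}(e^{\gamma_{n+2}-\gamma_{n+1}}-1)$), and then use $\gamma_{n+2}\le\gamma_{n+1}$ together with $\psi_n\ge0$ to drop that term and close the induction. Your observation about the index range is also on point.
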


\begin{proof} $W_0=e^{\gamma_1} \leq e^{\gamma_1}$, starting an inductive proof of \eqref{2.15}.
By \eqref{2.8a},
\[
\psi_{n+1} =e^{-\gamma_{n+1}} W_n + e^{-2\gamma_{n+1}}\psi_n
\]
so
\begin{align}
W_{n+1} &= e^{(\gamma_{n+2} -\gamma_{n+1})} (W_n + e^{-\gamma_{n+1}}\psi_n) -e^{-\gamma_{n+1}} \psi_n \notag \\
&= e^{(\gamma_{n+2} -\gamma_{n+1})} W_n + e^{-\gamma_{n+1}} (e^{(\gamma_{n+2}-\gamma_{n+1})}-1) \psi_n \lb{2.16a} \\
&\leq e^{(\gamma_{n+2}-\gamma_{n+1})} W_n
\end{align}
since \eqref{2.12} implies $e^{\gamma_{n+2}}\leq e^{\gamma_{n+1}}$ and $\psi_n\geq 0$,
 $(e^{\gamma_{n+2}-\gamma_{n-1}}-1)\psi_n \leq 0$.
Thus, $W_n\leq e^{\gamma_{n+1}}$ implies $W_{n+1}\leq e^{\gamma_{n+2}}$ and
\eqref{2.15} holds inductively.
\end{proof}

\begin{lemma}\lb{L2.4}
For $n=0,1,2,\dots,N(x) - 2$,
\begin{equation} \lb{2.16}
\psi_{n+1}\leq 1 + \psi_n
\end{equation}
So, in particular, for $0\leq n < N(x)$,
\begin{equation} \lb{2.17}
\psi_n\leq n+1
\end{equation}
\end{lemma}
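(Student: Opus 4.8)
The plan is to mimic the structure of Lemma~\ref{L2.2}, proving \eqref{2.16} by induction on $n$ using the same telescoping trick, but now working with the \emph{increments} $\psi_{n+1}-\psi_n$ from above rather than from below. The natural quantity to track is $W_n$ from Lemma~\ref{L2.3}, because \eqref{2.8a} rewritten as $\psi_{n+1}=e^{-\gamma_{n+1}}W_n + e^{-2\gamma_{n+1}}\psi_n$ gives
\[
\psi_{n+1}-\psi_n = e^{-\gamma_{n+1}}W_n - (1-e^{-2\gamma_{n+1}})\psi_n,
\]
so an upper bound on $W_n$ together with the already-established lower bound $\psi_n\geq 1$ (and hence $(1-e^{-2\gamma_{n+1}})\psi_n\geq 1-e^{-2\gamma_{n+1}}$) should control $\psi_{n+1}-\psi_n$ from above.

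First I would substitute the bound $W_n\leq e^{\gamma_{n+1}}$ from Lemma~\ref{L2.3} into the identity above, yielding
\[
\psi_{n+1}-\psi_n \leq 1 - (1-e^{-2\gamma_{n+1}})\psi_n \leq 1 - (1-e^{-2\gamma_{n+1}}) \cdot 1 = e^{-2\gamma_{n+1}} \leq 1,
\]
where the middle inequality uses $\psi_n\geq 1$ from \eqref{2.11} and $1-e^{-2\gamma_{n+1}}\geq 0$ (valid since $\gamma_{n+1}\geq 0$). This is precisely \eqref{2.16}. Note this argument needs $n\le N(x)-2$ so that $\gamma_{n+1}$ and $W_n$ (defined for indices up to $N(x)-1$) are available; this matches the stated range.

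For \eqref{2.17} I would then iterate: $\psi_0=1$, and \eqref{2.16} gives $\psi_{n+1}\le 1+\psi_n$, so by induction $\psi_n\le n+1$ for $0\le n<N(x)$. One small bookkeeping point: \eqref{2.16} is only asserted for $n\le N(x)-2$, which propagates the bound up to index $N(x)-1$, exactly the range claimed in \eqref{2.17}; the endpoint $n=N(x)-1$ is reached as the conclusion $\psi_{N(x)-1}\le N(x)$ rather than as a hypothesis, so there is no gap.

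I do not expect a genuine obstacle here: the work has all been front-loaded into Lemmas~\ref{L2.2} and~\ref{L2.3}, and this lemma is essentially their corollary. The only thing to be careful about is making sure the inequality $(1-e^{-2\gamma_{n+1}})\psi_n \ge 1-e^{-2\gamma_{n+1}}$ is applied in the correct direction (it is fine since both factors are nonnegative), and keeping the index ranges consistent so that every $W_n$ and $\gamma_n$ invoked has actually been defined. If one wanted to avoid even invoking $\psi_n\ge1$, one could instead just use $\psi_{n+1}-\psi_n \le e^{-\gamma_{n+1}}W_n \le 1$ directly from $W_n\le e^{\gamma_{n+1}}$, discarding the nonnegative term $(1-e^{-2\gamma_{n+1}})\psi_n$; this is the cleanest route and is what I would actually write.
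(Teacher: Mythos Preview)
Your proposal is correct and follows essentially the same approach as the paper: both express $\psi_{n+1}$ via the quantity $W$ from Lemma~\ref{L2.3} and then use $W\le e^{\gamma}$ together with $e^{-\gamma}\le 1$ to get $\psi_{n+1}\le 1+\psi_n$. The only cosmetic difference is that the paper uses the definition of $W_{n+1}$ to write $\psi_{n+1}=e^{-\gamma_{n+2}}W_{n+1}+e^{-(\gamma_{n+1}+\gamma_{n+2})}\psi_n$, whereas you use the recursion \eqref{2.8a} together with $W_n$ to write $\psi_{n+1}=e^{-\gamma_{n+1}}W_n+e^{-2\gamma_{n+1}}\psi_n$; your ``cleanest route'' at the end is exactly the paper's one-line argument, and your index bookkeeping is fine (in fact your version needs only $n\le N(x)-1$, one index more than required).
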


\begin{proof} By \eqref{2.14},
\begin{align*}
\psi_{n+1} &= e^{-\gamma_{n+2}} W_{n+1} + e^{-(\gamma_{n+1}+\gamma_{n+2})} \psi_n \\
&\leq 1+\psi_n
\end{align*}
since $e^{-\gamma_{n+2}} W_{n+1}\leq 1$ by \eqref{2.15} and $\gamma_j \geq 0$ implies
$e^{-(\gamma_{n+1}+\gamma_{n+2})}\leq 1$. This proves \eqref{2.16}, which inductively implies
\eqref{2.17}.
\end{proof}

We summarize with:

\begin{proposition}\lb{P2.5} For any $n$ with $1\leq n < N(x)$,
\begin{equation} \lb{2.18}
e^{\sum_{j=1}^n \gamma_j(x)} \leq p_n(x) \leq (n+1) e^{\sum_{j=1}^n \gamma_j(x)}
\end{equation}
In particular, if
\begin{equation} \lb{2.19}
\eta_n(x) = p_{n-1}(x)^2 + p_n(x)^2
\end{equation}
then
\begin{equation} \lb{2.20}
e^{2\sum_{j=1}^n \gamma_j(x)} \leq \eta_n(x) \leq 2(n+1)^2 e^{2\sum_{j=1}^n \gamma_j(x)}
\end{equation}
\end{proposition}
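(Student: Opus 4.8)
The plan is to assemble \eqref{2.18} directly from the three lemmas and the definition \eqref{2.8}, then deduce \eqref{2.20} by elementary algebra. Recall that for $n \le N(x)$ we set $\psi_n(x) = e^{-\sum_{j=1}^n \gamma_j(x)} p_n(x)$, equivalently $p_n(x) = e^{\sum_{j=1}^n \gamma_j(x)} \psi_n(x)$. Thus \eqref{2.18} is literally the statement $1 \le \psi_n(x) \le n+1$. The lower bound $\psi_n(x) \ge 1$ is exactly \eqref{2.11} from Lemma~\ref{L2.2}, valid for $0 \le n < N(x)$. For the upper bound, Lemma~\ref{L2.4} gives $\psi_n \le n+1$ for $0 \le n < N(x)$ via \eqref{2.17} (which is obtained by iterating \eqref{2.16} starting from $\psi_0 = 1$). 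So multiplying through by $e^{\sum_{j=1}^n \gamma_j(x)} > 0$ yields \eqref{2.18} for all $n$ with $1 \le n < N(x)$.

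For \eqref{2.20}, I would argue as follows. The lower bound: since $\gamma_j \ge 0$, the partial sums $\sum_{j=1}^m \gamma_j$ are nondecreasing in $m$, so $\sum_{j=1}^{n-1}\gamma_j \le \sum_{j=1}^n \gamma_j$; combined with \eqref{2.18} applied at both $n-1$ and $n$ (for $n-1 = 0$ one just uses $p_0 = 1 \ge 1$ directly, and $p_{-1}=0$ only helps), we get
\[
\eta_n(x) = p_{n-1}(x)^2 + p_n(x)^2 \ge p_n(x)^2 \ge e^{2\sum_{j=1}^n \gamma_j(x)}.
\]
The upper bound: again using monotonicity of the partial sums, $e^{\sum_{j=1}^{n-1}\gamma_j} \le e^{\sum_{j=1}^n \gamma_j}$, so by \eqref{2.18},
\[
p_{n-1}(x)^2 \le n^2 e^{2\sum_{j=1}^{n-1}\gamma_j(x)} \le (n+1)^2 e^{2\sum_{j=1}^n \gamma_j(x)},
\qquad
p_n(x)^2 \le (n+1)^2 e^{2\sum_{j=1}^n \gamma_j(x)},
\]
and adding gives $\eta_n(x) \le 2(n+1)^2 e^{2\sum_{j=1}^n \gamma_j(x)}$, which is \eqref{2.20}.

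There is really no main obstacle here: the Proposition is a packaging statement, and all the genuine work was done in Lemmas~\ref{L2.2}--\ref{L2.4}. The only points requiring a little care are (i) the edge case $n=1$, where $p_{n-1} = p_0 = 1$ and one should check the bounds hold without invoking \eqref{2.18} at index $0$ (they do, trivially); and (ii) making sure that $\sum_{j=1}^{n-1}\gamma_j \le \sum_{j=1}^n\gamma_j$ is used rather than the reverse — this is where the nonnegativity of $\gamma_j$ enters, and it is what lets us bound the $p_{n-1}$ term by the same exponential $e^{2\sum_{j=1}^n\gamma_j}$ that controls $p_n$. Everything else is multiplying inequalities by positive quantities and adding.
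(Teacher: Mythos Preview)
Your proof is correct and follows exactly the paper's approach: \eqref{2.18} is immediate from the definition \eqref{2.8} together with the bounds $1\le\psi_n\le n+1$ from Lemmas~\ref{L2.2} and \ref{L2.4}, and \eqref{2.20} then follows by the elementary algebra you give. The paper's proof is simply the one-line version of what you wrote.
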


\begin{proof}
\eqref{2.18} is an immediate consequence of \eqref{2.8}, \eqref{2.11} and \eqref{2.17}.
\end{proof}

Suppose $x\in (0,2)$. For $n >N(x)$, define $\kappa_n(x)$ by $0\leq \kappa_n< \f{\pi}{2}$ and
\begin{equation} \lb{2.21}
x-b_n =2\cos\kappa_n(x)
\end{equation}
so $0>b_{n+1}\geq b_n$ implies
\[
0\leq \kappa_n\leq \kappa_{n+1}
\]
and $b_n\to 0$ implies
\begin{equation} \lb{2.22}
\kappa_n\to\kappa_\infty =\cos^{-1} (\tfrac{x}{2})
\end{equation}

For later reference, we note
\begin{equation} \lb{2.23}
\sin(\kappa_\infty) = (1-(\tfrac{x}{2})^2)^{\f12} = \tfrac12\, (4-x^2)^{\f12}
\end{equation}
So as $x\uparrow 2$,
\begin{equation} \lb{2.23a}
\kappa_\infty =(2-x)^{\f12} + O((2-x)^{\f32})
\end{equation}

We first present a matrix method following Kooman \cite{Koo} to control the region $[N(x)+2,\infty)$.
At the end, we will discuss an alternate method using scalar Pr\"ufer-like variables.

By \eqref{1.16}, for $n>N$\!, $A_n$ has eigenvalues $e^{\pm i\kappa_n}$. In fact,
\begin{equation} \lb{2.24}
\begin{pmatrix}
2\cos\kappa & -1 \\ 1 & 0
\end{pmatrix}
\begin{pmatrix} 1 \\ e^{\mp i\kappa}
\end{pmatrix}
= e^{\pm i\kappa}
\begin{pmatrix}
1 \\ e^{\mp i\kappa}
\end{pmatrix}
\end{equation}
so if
\begin{equation} \lb{2.25}
Y(\kappa) = \begin{pmatrix}
1 & 1 \\ e^{-i\kappa} & e^{i\kappa} \end{pmatrix}
\end{equation}
and
\begin{equation} \lb{2.25a}
V(\kappa) = \begin{pmatrix}
e^{i\kappa} & 0 \\
0 & e^{-i\kappa} \end{pmatrix}
\end{equation}
then
\begin{equation} \lb{2.26}
A_n(x) = Y(\kappa_n) V(\kappa_n) Y(\kappa_n)^{-1}
\end{equation}

Next, notice that
\begin{equation} \lb{2.27}
Y(\kappa)^{-1} = \f{1}{2i\sin \kappa}\begin{pmatrix}
e^{i\kappa} & -1 \\
-e^{-i\kappa} & -1 \end{pmatrix}
\end{equation}
Following Kooman \cite{Koo}, we write for $n > \ell > N(x)$,
\begin{align}
T_n(x) &\equiv A_n \cdots A_{\ell+1} \lb{2.28} \\
&= Y(\kappa_n) V_n Y(\kappa_n)^{-1} Y(\kappa_{n-1}) V_{n-1} \cdots Y(\kappa_{\ell+1})^{-1} \notag
\end{align}
and since $\|V_n(\kappa)\|=1$,
\begin{equation} \lb{2.29}
\|T_n\|\leq \|Y(\kappa_n)\|\, \|Y(\kappa_{\ell+1})^{-1}\| \prod_{j=\ell+1}^{n-1}
\|Y(\kappa_{j+1})^{-1} Y(\kappa_j)\|
\end{equation}

This prepares us for two critical estimates:

\begin{lemma}\lb{L2.6} We have
\begin{equation} \lb{2.30}
\|Y(\kappa_{j+1})^{-1} Y(\kappa_j)\| \leq 1 + \f{\abs{e^{i\kappa_{j+1}}-e^{i\kappa_j}}}
{\sin(\kappa_{j+1})}
\end{equation}
so, in particular,
\begin{equation} \lb{2.31}
\|Y(\kappa_{j+1})^{-1} Y(\kappa_j)\| \leq 1 + \f{\abs{\kappa_{j+1} -\kappa_j}}{\sin (\kappa_j)}
\end{equation}
\end{lemma}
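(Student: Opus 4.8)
The plan is to use the fact that $Y(\kappa_{j+1})^{-1}Y(\kappa_j)$ differs from the identity by a \emph{rank-one} matrix, whose operator norm can be read off exactly. Write
\[
Y(\kappa_{j+1})^{-1}Y(\kappa_j) = I + Y(\kappa_{j+1})^{-1}\bigl(Y(\kappa_j)-Y(\kappa_{j+1})\bigr),
\]
and note from \eqref{2.25} that $Y(\kappa_j)-Y(\kappa_{j+1})$ has vanishing first row, its second row being $\bigl(e^{-i\kappa_j}-e^{-i\kappa_{j+1}},\ e^{i\kappa_j}-e^{i\kappa_{j+1}}\bigr)$. Hence, using the explicit inverse \eqref{2.27}, only the second column $\binom{-1}{-1}$ of $Y(\kappa_{j+1})^{-1}$ contributes, and
\[
Y(\kappa_{j+1})^{-1}\bigl(Y(\kappa_j)-Y(\kappa_{j+1})\bigr)
= \f{-1}{2i\sin\kappa_{j+1}}\binom{1}{1}\bigl(e^{-i\kappa_j}-e^{-i\kappa_{j+1}},\ e^{i\kappa_j}-e^{i\kappa_{j+1}}\bigr).
\]
(Since $x\in(0,2)$ and $j+1>N(x)$ we have $\kappa_{j+1}\in(0,\tfrac\pi2)$, so $\sin\kappa_{j+1}>0$ and this makes sense.)

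For a rank-one matrix $uv^T$ on $\bbC^2$ one has $\|uv^T\| = \|u\|_2\,\|v\|_2$. Here $\|\binom{1}{1}\|_2 = \sqrt2$, so the left factor has norm $\sqrt2/(2\sin\kappa_{j+1})$, while the right vector has squared norm $|e^{-i\kappa_j}-e^{-i\kappa_{j+1}}|^2 + |e^{i\kappa_j}-e^{i\kappa_{j+1}}|^2 = 2\,|e^{i\kappa_{j+1}}-e^{i\kappa_j}|^2$ (the two summands being complex conjugates of each other), hence norm $\sqrt2\,|e^{i\kappa_{j+1}}-e^{i\kappa_j}|$. Multiplying, the $\sqrt2$'s combine to give
\[
\bigl\|Y(\kappa_{j+1})^{-1}\bigl(Y(\kappa_j)-Y(\kappa_{j+1})\bigr)\bigr\| = \f{\abs{e^{i\kappa_{j+1}}-e^{i\kappa_j}}}{\sin(\kappa_{j+1})},
\]
and the triangle inequality applied to $I + (\,\cdot\,)$ yields \eqref{2.30}.

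To pass to \eqref{2.31}, bound the chord by the arc, $\abs{e^{i\kappa_{j+1}}-e^{i\kappa_j}} = \bigl|\int_{\kappa_j}^{\kappa_{j+1}} i e^{it}\,dt\bigr| \le \abs{\kappa_{j+1}-\kappa_j}$, and then replace $\sin(\kappa_{j+1})$ by the smaller $\sin(\kappa_j)$ in the denominator, which is legitimate because $0\le\kappa_j\le\kappa_{j+1}<\tfrac\pi2$ and $\sin$ is increasing on $[0,\tfrac\pi2]$; this gives \eqref{2.31}. There is no real obstacle here: the whole content is the elementary identity $\|uv^T\|=\|u\|_2\|v\|_2$, and the only point to be careful about is to recognize the rank-one structure so that this exact value is used rather than a cruder entrywise estimate (which would cost a spurious constant factor and spoil the later telescoping of $\prod(1+\cdots)$ in \eqref{2.29}).
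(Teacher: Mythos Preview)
Your proof is correct and follows the same overall strategy as the paper: write $Y(\kappa_{j+1})^{-1}Y(\kappa_j)-I$ explicitly, bound its operator norm, apply the triangle inequality, and then pass from \eqref{2.30} to \eqref{2.31} via the chord--arc inequality together with $\sin(\kappa_{j+1})\ge\sin(\kappa_j)$ on $[0,\tfrac\pi2)$. The only difference is in how the norm of the perturbation is estimated: the paper simply uses the crude bound $\|A\|\le 2\max_{i,j}|a_{ij}|$ on the matrix \eqref{2.32}, which happens to be sharp here because all four entries have the same modulus, whereas you exploit the rank-one structure and the identity $\|uv^T\|=\|u\|_2\|v\|_2$ to compute the norm exactly. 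Both routes give the same value, so your remark that a ``cruder entrywise estimate\dots would cost a spurious constant factor'' is in fact not a danger in this particular case---the entrywise bound is already tight---but your argument is a clean way to see \emph{why} it is tight.
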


\begin{proof} By \eqref{2.25} and \eqref{2.27},
\begin{equation} \lb{2.32}
Y(\kappa_{j+1})^{-1} Y(\kappa_j)-\bdone = \f{1}{2\sin(\kappa_{j+1})}
\begin{pmatrix}
e^{-i\kappa_{j+1}}-e^{-\kappa_j} & e^{i\kappa_{j+1}} -e^{i\kappa_j} \\
e^{-i\kappa_j} -e^{-i\kappa_{j+1}} & e^{i\kappa_j} - e^{i\kappa_{j+1}}
\end{pmatrix}
\end{equation}

If $A=(a_{ij})$ is a $2\times 2$ matrix,
\begin{align*}
\abs{\langle \varphi,A\psi\rangle} &\leq \max (\abs{a_{ij}}) (\abs{\varphi_1}+\abs{\varphi_2})
(\abs{\psi_1} + \abs{\psi_2}) \\
&\leq 2\max (\abs{a_{ij}}) (\abs{\varphi_1}^2)+\abs{\varphi_2}^2)^{\f12}
(\abs{\psi_1}^2 + \abs{\psi_2}^2)^{\f12}
\end{align*}
since $(\abs{x}+\abs{y})\leq\sqrt{2} (\abs{x}^2+\abs{y}^2)^{\f12}$, so
\[
\|Y(\kappa_{j+1})^{-1} Y(\kappa_j)-\bdone\| \leq \f{1}{\sin(\kappa_{j+1})}\,
\abs{e^{i\kappa_{j+1}} -e^{i\kappa_j}}
\]
which implies \eqref{2.30}.

\eqref{2.30} implies \eqref{2.31} since $\f{\pi}{2} > \kappa_{j+1} \geq \kappa_j$ implies
$\sin(\kappa_{j+1})\geq \sin(\kappa_j)$.
\end{proof}

\begin{remark} That \eqref{2.31} holds with a $1$ in front of $\abs{\kappa_{j+1}-\kappa_j}/
\sin(\kappa_j)$ is critical. Lest it seem a miracle of Kooman's method, we give an
alternate calculation at the end of this section.
\end{remark}

\begin{lemma}\lb{L2.7} We have that
\begin{equation} \lb{2.33}
\prod_{j=\ell+1}^\infty \biggl(1+\f{\abs{\kappa_{j+1}-\kappa_j}}{\sin(\kappa_j)}\biggr)
\leq \f{\kappa_\infty}{\kappa_{\ell+1}}\, \exp(\kappa_\infty e(\kappa_\infty))
\end{equation}
where
\begin{equation} \lb{2.34}
e(y) = \sup_{0 < x\leq y}\, \biggl(\f{1}{\sin(x)} - \f{1}{x}\biggr)
\end{equation}
\end{lemma}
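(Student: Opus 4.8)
The plan is to reduce the product to a telescoping estimate. First I would take logarithms and use $\log(1+t)\le t$ for $t\ge 0$ to get
\[
\log\prod_{j=\ell+1}^\infty\Bigl(1+\frac{\abs{\kappa_{j+1}-\kappa_j}}{\sin(\kappa_j)}\Bigr)\le\sum_{j=\ell+1}^\infty\frac{\abs{\kappa_{j+1}-\kappa_j}}{\sin(\kappa_j)}.
\]
Since the $\kappa_j$ are nondecreasing in $j$ and bounded above by $\kappa_\infty<\frac{\pi}{2}$, we have $\abs{\kappa_{j+1}-\kappa_j}=\kappa_{j+1}-\kappa_j$, and the sum is a one-sided Riemann sum for $\int_{\kappa_{\ell+1}}^{\kappa_\infty}\frac{d\theta}{\sin\theta}$; more precisely, because $1/\sin$ is monotone on $(0,\pi/2)$ one gets $\sum_{j}\frac{\kappa_{j+1}-\kappa_j}{\sin(\kappa_j)}$ compared to that integral. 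Actually, to avoid even this comparison, I would split $\frac{1}{\sin(\kappa_j)}=\frac{1}{\kappa_j}+\bigl(\frac{1}{\sin(\kappa_j)}-\frac{1}{\kappa_j}\bigr)$ and handle the two pieces separately.

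For the second piece, bound $\frac{1}{\sin(\kappa_j)}-\frac{1}{\kappa_j}\le e(\kappa_\infty)$ uniformly in $j\ge\ell+1$ (this is exactly the definition \eqref{2.34} of $e(y)$, using $\kappa_j\le\kappa_\infty$), so that piece contributes at most $e(\kappa_\infty)\sum_{j=\ell+1}^\infty(\kappa_{j+1}-\kappa_j)=e(\kappa_\infty)(\kappa_\infty-\kappa_{\ell+1})\le\kappa_\infty\,e(\kappa_\infty)$, which telescopes. For the first piece, $\sum_{j=\ell+1}^\infty\frac{\kappa_{j+1}-\kappa_j}{\kappa_j}$: since $\theta\mapsto 1/\theta$ is decreasing, $\frac{\kappa_{j+1}-\kappa_j}{\kappa_j}\le\int_{\kappa_j}^{\kappa_{j+1}}\frac{d\theta}{\theta}$, so the sum is at most $\int_{\kappa_{\ell+1}}^{\kappa_\infty}\frac{d\theta}{\theta}=\log\frac{\kappa_\infty}{\kappa_{\ell+1}}$. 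Exponentiating the sum of the two bounds gives
\[
\prod_{j=\ell+1}^\infty\Bigl(1+\frac{\abs{\kappa_{j+1}-\kappa_j}}{\sin(\kappa_j)}\Bigr)\le\exp\Bigl(\log\tfrac{\kappa_\infty}{\kappa_{\ell+1}}+\kappa_\infty e(\kappa_\infty)\Bigr)=\frac{\kappa_\infty}{\kappa_{\ell+1}}\exp(\kappa_\infty e(\kappa_\infty)),
\]
which is \eqref{2.33}.

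The only genuine points to check are that $e(\kappa_\infty)$ is finite — which holds because $\frac{1}{\sin x}-\frac1x\to 0$ as $x\downarrow 0$ and the function is continuous on $(0,\kappa_\infty]$ with $\kappa_\infty<\pi/2$ — and that the monotonicity $\kappa_j\le\kappa_{j+1}$, already noted right after \eqref{2.21}, justifies dropping the absolute values and comparing sums to integrals. I do not expect any real obstacle here; the lemma is essentially the observation that the "bad" factor $\frac{1}{\kappa_{\ell+1}}$ in \eqref{2.33} comes from the $1/\theta$ singularity of $1/\sin\theta$ at the origin, while the rest is a bounded (indeed telescoping) correction. The subsequent use will be that $\kappa_{\ell+1}$ is comparable to $(2-x)^{1/2}$ near the turning point via \eqref{2.23a}, so this $1/\kappa_{\ell+1}$ is the source of the $(2-x)^{1/2}$ in \eqref{2.5}.
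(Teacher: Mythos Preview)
There is a genuine error in your handling of the first piece. You claim that since $\theta\mapsto 1/\theta$ is decreasing,
\[
\frac{\kappa_{j+1}-\kappa_j}{\kappa_j}\le\int_{\kappa_j}^{\kappa_{j+1}}\frac{d\theta}{\theta},
\]
but the inequality goes the other way: for a decreasing integrand the left-endpoint Riemann sum is an \emph{upper} bound for the integral, i.e.\ $\int_{\kappa_j}^{\kappa_{j+1}}\frac{d\theta}{\theta}\le (\kappa_{j+1}-\kappa_j)\cdot\frac{1}{\kappa_j}$. Equivalently, $\log(\kappa_{j+1}/\kappa_j)\le \kappa_{j+1}/\kappa_j-1$ for all $j$, so $\sum_j (\kappa_{j+1}-\kappa_j)/\kappa_j$ is always \emph{at least} $\log(\kappa_\infty/\kappa_{\ell+1})$, never at most. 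This is not a minor slip: once you apply $\log(1+t)\le t$ at the outset, you have thrown away precisely the slack needed to get back to $\log(\kappa_{j+1}/\kappa_j)$, and there is no general monotonicity argument that recovers it.

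The repair is to avoid taking logarithms first. The paper bounds each factor directly: using $\frac{1}{\sin\kappa_j}\le\frac{1}{\kappa_j}+e(\kappa_\infty)$ one has
\[
1+\frac{\kappa_{j+1}-\kappa_j}{\sin\kappa_j}
\le \frac{\kappa_{j+1}}{\kappa_j}+(\kappa_{j+1}-\kappa_j)e(\kappa_\infty)
\le \frac{\kappa_{j+1}}{\kappa_j}\exp\bigl((\kappa_{j+1}-\kappa_j)e(\kappa_\infty)\bigr),
\]
where the first step uses the \emph{exact} identity $1+(\kappa_{j+1}-\kappa_j)/\kappa_j=\kappa_{j+1}/\kappa_j$ rather than an inequality. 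Now both pieces telescope in the product. Your decomposition $\frac{1}{\sin}=\frac{1}{\theta}+(\frac{1}{\sin}-\frac{1}{\theta})$ and the treatment of the $e(\kappa_\infty)$ term are the same as the paper's; the only change needed is to keep the $1/\kappa_j$ piece inside the factor so that it telescopes multiplicatively, instead of passing to a sum where the comparison fails.
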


\begin{remark} Since $\sin(x)=x-\f{x^3}{6}+O(x^5)$, $\f{1}{\sin(x)} =\f{1}{x} + \f{x}{6}
+ O(x^3)$ and since $\sin(x) <x$, we see $e(y)$ is finite and
\begin{equation} \lb{2.35}
e(y) =O(\tfrac{y}{6}) \qquad\text{as } y\downarrow 0
\end{equation}
\end{remark}

\begin{proof} We have
\begin{equation} \lb{2.36}
\f{1}{\sin(\kappa_j)} \leq \f{1}{\kappa_j} + e(\kappa_\infty)
\end{equation}
so, since $\kappa_{j+1}\geq\kappa_j$,
\begin{align}
1 + \f{\abs{\kappa_{j+1}-\kappa_j}}{\sin(\kappa_j)}
&\leq \f{\kappa_{j+1}}{\kappa_j} + (\kappa_{j+1} -\kappa_j) e(\kappa_\infty) \lb{2.37} \\
&\leq \f{\kappa_{j+1}}{\kappa_j} \, (1+(\kappa_{j+1}-\kappa_j)e(\kappa_\infty)) \lb{2.38} \\
&\leq \f{\kappa_{j+1}}{\kappa_j}\, \exp((\kappa_{j+1}-\kappa_j)e(\kappa_\infty)) \lb{2.39}
\end{align}
from which \eqref{2.33} is immediate if we note that $\kappa_\infty -\kappa_\ell\leq\kappa_\infty$.
\end{proof}

\begin{proof}[Proof of Theorem~\ref{T2.1}] By \eqref{2.29} and Lemmas~\ref{L2.6} and \ref{L2.7},
if $T_k(x)$ is the transfer matrix from $N(x)+2$ to $k>N(x)+2$, then uniformly in $k$,
\begin{equation} \lb{2.40}
\|T_n\| \leq 2 (\sin(\kappa_{N(x)+2}))^{-1} \, \f{\kappa_\infty}{\kappa_{N(x)+2}}\,
\exp (\kappa_\infty e(\kappa_\infty))
\end{equation}
where we also used $\|Y(\kappa_k)\|\leq 2$ and $\|Y(\kappa_{N(x)+2})^{-1}\|\leq 2/
2\sin(\kappa_{N(x)+2})$.

As $x\uparrow 2$, $\kappa_\infty\to 0$. Indeed, by \eqref{2.23a}, $\kappa_\infty = (2-x)^{\f12}
+O((2-x)^{\f32})$. Moreover, by the definition of $N(x)$,
\begin{equation} \lb{2.41}
x-b_{N+1} <2
\end{equation}
while
\begin{equation} \lb{2.42}
x-b_{N+2}=2\cos (\kappa_{N+2})
\end{equation}
so
\begin{equation} \lb{2.43}
2(1-\cos(\kappa_{N+2})) > b_{N+2}-b_{N+1}
\end{equation}

Since $N(x)\to\infty$, $b_{N(x)+2}\to 0$ so $\kappa_{N+2}(x)\to 0$ and \eqref{2.43} implies
\begin{equation} \lb{2.44}
\kappa_{N+2}(x)^2 > (1+o(1))(b_{N+2}-b_{N+1})
\end{equation}
Thus, in \eqref{2.40}, $[\kappa_{N(x)+2}\sin(\kappa_{N+2})]^{-1} \leq (1+o(1))
(b_{N+2}-b_{N+1})$ and \eqref{2.40} becomes
\begin{equation} \lb{2.45}
\sup_{n\geq N(x)+2}\, \|\ti T_n\| \leq C(2-x)^{\f12} (b_{N+2}-b_{N+1})^{-1}\equiv A(x)
\end{equation}
where now $\ti T_n$ transfers from $N-1$ to $n$
and we use the boundedness from $N-1$ to $N+2$.
Using
\begin{equation} \lb{2.46}
\|\ti T_n\|^{-2} (\abs{p_{n+1}}^2 + \abs{p_n}^2) \leq \abs{p_N}^2 + \abs{p_{N-1}}^2 \leq
\|\ti T_n^{-1}\|^2 (\abs{p_{n+1}}^2 + \abs{p_n}^2)
\end{equation}
and \eqref{2.20}, we obtain for all $n>N$\!,
\begin{equation} \lb{2.47}
C_1 A(x)^{-2} e^{2\sum_1^N \gamma_j(x)} \leq (\abs{p_n}^2 + \abs{p_{n+1}}^2) \leq
CA(x)^2 N(x)^2 e^{2\sum_1^N \gamma_j(x)}
\end{equation}
which, given Corollary~\ref{C1.3}, implies \eqref{2.2}--\eqref{2.4}.

In going from \eqref{2.46} to \eqref{2.47}, we used
\[
\det(\ti T_n)=1\Rightarrow \|\ti T_n^{-1}\| = \|\ti T_n\|
\]

We also need to control the region $x>-2$ with $2-x$ small. By replacing $x$ by $-x$ (and $p_n(x)$ by
$(-1)^n p_n(-x)$), this is the same as looking at $x+b_n$ with still $b_n<b_{n+1} <0$. We define
$\theta_n(x)$ by
\begin{equation} \lb{2.47a}
2\cos(\theta_n(x)) =x+b_n
\end{equation}
so
\begin{equation} \lb{2.47b}
\theta_1\geq\theta_2\geq\cdots\geq\theta_\infty = \kappa_\infty = (2-x)^{\f12} + O((2-x)^{\f32})
\end{equation}

As above, we have \eqref{2.30}, so
\begin{equation} \lb{2.47c}
\|Y(\theta_{j+1})^{-1} Y(\theta_j)\|\leq 1 + \f{\abs{\theta_{j+1}-\theta_j}}{\sin(\theta_{j+1})}
\end{equation}
but since $\theta_{j+1} <\theta_j$, we have
\begin{equation} \lb{2.47d}
1+\f{\abs{\theta_{j+1}-\theta_j}}{\abs{\theta_{j+1}}} = \f{\theta_{j+1}+(\theta_j-\theta_{j+1})}
{\theta_{j+1}} = \f{\theta_j}{\theta_{j+1}}
\end{equation}
and we find that, with $T_n$ being the transfer matrix from $1$ to $n$,
\begin{equation} \lb{2.47e}
\|T_n\|\leq \f{\theta_1}{\theta_\infty}\, 2\, \f{2}{2\sin(\theta_1)} \leq
\f{C}{\theta_\infty} \leq C(2-x)^{\f12} (1+o(1))
\end{equation}
This bound on the transfer matrix and Corollary~\ref{C1.3} yield \eqref{2.1}.
\end{proof}

\begin{remark} It might be surprising that \eqref{2.1} has $(x+2), (x+2)^{-1}$ rather than
$(x+2)^{\f12},  (x+2)^{-\f12}$ (because Carmona's bound relates $w(x)$ to $\|T_n\|^2$ and
$\sup\|T_n\|$ goes like $(2-x)^{\f12}$). Even in the free case, bounds from Carmona's formula
give the wrong behavior: $\sin(n\theta)+\sin^2((n+1)\theta)$ have oscillations that cause the
actual square root behavior in the free case, and bounds based only on $\|T_n\|$ lose that.
\end{remark}

That completes the proof of Theorem~\ref{T2.1}, the main result of this paper.
Here is an alternate approach to controlling $p_n$ for $n>N$,
using the complex quantities:
\begin{equation} \lb{2.48}
\Phi_n =p_n -e^{-i\kappa_n} p_{n-1}
\end{equation}
so, since $p_j$ is real,
\begin{align}
\sin (\kappa_n)\abs{p_{n-1}} &= \abs{\Ima (-\Phi_n)} \notag \\
&\leq \abs{\Phi_n} \lb{2.49}
\end{align}

By \eqref{2.21}, we have
\begin{equation} \lb{2.50}
p_{n+1} =(e^{i\kappa_{n+1}} + e^{-i\kappa_{n+1}}) p_n-p_{n-1}
\end{equation}
so
\begin{align}
\Phi_{n+1} &= e^{i\kappa_{n+1}}[p_n - e^{-i\kappa_{n+1}} p_{n-1}] \notag \\
&= e^{i\kappa_{n+1}} \Phi_n + e^{i\kappa_{n+1}}(e^{-i\kappa_n} -e^{-i\kappa_{n+1}}) p_{n-1} \lb{2.51}
\end{align}
Using \eqref{2.49},
\begin{equation} \lb{2.52}
\abs{\Phi_{n+1}} \leq \abs{\Phi_n} + \f{\abs{\kappa_n -\kappa_{n+1}}}{\sin(\kappa_n)}\, \abs{\Phi_n}
\end{equation}
and similarly, 
\begin{equation} \lb{2.53}
\abs{\Phi_{n+1}} \geq \abs{\Phi_n} - \f{\abs{\kappa_n-\kappa_{n+1}}}{\sin(\kappa_n)}\, \abs{\Phi_n}
\end{equation}

These replace \eqref{2.31} and imply, via Lemma~\ref{L2.7} and the analysis in \eqref{2.41},
that
\[
C_1 (2-x)^{-\f12} (b_{N+2}-b_{N+1}) \leq \f{\abs{\Phi_n}}{\abs{\Phi_{N+2}}} \leq
C(2-x)^{\f12} (b_{N+2}-b_{N+1})^{-1}
\]

Since
\[
\abs{\Phi_n}^2\leq \abs{p_n}^2 + \abs{p_{n-1}}^2
\]
and
\[
2\abs{\Phi_n}^2 \geq \sin^2 (\kappa_{n+1}) (\abs{p_n}^2 + \abs{p_{n-1}}^2)
\]
we can go from this to Theorem~\ref{T2.1}.

%%%%%%%%%%%%%%%%%%%%%%%%%%%%%%%%%
\section{Monotone $a_n$} \lb{s3}
%%%%%%%%%%%%%%%%%%%%%%%%%%%%%%%%%

In this section, we will consider
\begin{equation} \lb{3.1}
b_n\equiv 0 \qquad a_{n+1}\leq a_n\leq 1 \qquad a_n\to 1
\end{equation}
The weight will be symmetric, the measure purely absolutely continuous (i.e., no eigenvalues outside
$[-2,2]$), and so for non-Szeg\H{o} weights, the integral will diverge at both ends. Here is the main
result:

\begin{theorem}\lb{T3.1} Let $d\mu(x) = w(x)\,dx$ be the measure associated with
Jacobi parameters obeying \eqref{3.1}. For any
$x\in (-2,2)$, define $N(x)$ by
\begin{equation} \lb{3.2}
2a_n \leq \abs{x}\quad\text{for } n\leq N(x) \qquad
2a_n > \abs{x}\quad\text{for } n> N(x)
\end{equation}
and $\gamma_n(x)$ for $n\leq N(x)$ by
\begin{equation} \lb{3.3}
\f{\abs{x}}{a_n} =2\cosh (\gamma_n (x))
\end{equation}
Then
\begin{equation} \lb{3.4}
w(x)=e^{-2Q(x)}
\end{equation}
where
\begin{gather*}
\abs{Q(x)-g(x)} \leq h(x) \\ 
g(x) =\sum_{j=1}^{N(x)} \gamma_j(x)
\end{gather*}
and $h(x)$ is given by
\begin{equation} \lb{3.5}
e^{h(x)}=CN(x) (a_{N(x)+2} -a_{N(x)+1})^{-1}
\end{equation}
\end{theorem}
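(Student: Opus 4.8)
My plan is to run the proof of Theorem~\ref{T2.1} essentially verbatim, with $x-b_n$ replaced by $|x|/a_n$ and with careful bookkeeping of the extra factors $a_n/a_{n+1}$ that now enter the recursion. Since $b_n\equiv 0$, the whole problem is invariant under $x\mapsto -x$ (together with $p_n(x)\mapsto(-1)^np_n(-x)$), so I may assume $x\in(0,2)$ (so $|x|=x$); also $\|J\|\leq 2\sup_na_n\leq 2$ by \eqref{3.1}, so there are no eigenvalues outside $[-2,2]$. From $xp_n=a_{n+1}p_{n+1}+a_np_{n-1}$ and $x/a_{n+1}=2\cosh\gamma_{n+1}$ (valid for $n+1\leq N(x)$), the substitution $\psi_n=e^{-\sum_{j=1}^n\gamma_j}p_n$ gives
\[
\psi_{n+1}=(1+e^{-2\gamma_{n+1}})\psi_n-\f{a_n}{a_{n+1}}\,e^{-(\gamma_n+\gamma_{n+1})}\psi_{n-1},
\]
which is \eqref{2.8a} up to the factor $a_n/a_{n+1}\leq 1$. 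As in \eqref{2.12}, monotonicity of $a_n$ gives $0\leq\gamma_{n+1}\leq\gamma_n$, and from \eqref{3.3} one has the identity $a_{n+1}/a_n=\cosh\gamma_n/\cosh\gamma_{n+1}$.

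First I would check that the three inductive lemmas of Section~\ref{s2} survive. In Lemma~\ref{L2.2} the new term in $\psi_{n+1}-\psi_n$ is $e^{-\gamma_{n+1}}\psi_{n-1}\bigl(e^{-\gamma_{n+1}}-\f{a_n}{a_{n+1}}e^{-\gamma_n}\bigr)$, which is $\geq 0$ since $\f{a_n}{a_{n+1}}e^{-\gamma_n}\leq e^{-\gamma_n}\leq e^{-\gamma_{n+1}}$; hence $\psi_n\geq 1$. In Lemma~\ref{L2.3} I would set $W_n=e^{\gamma_{n+1}}\psi_n-\f{a_n}{a_{n+1}}e^{-\gamma_n}\psi_{n-1}$, which still obeys $\psi_{n+1}=e^{-\gamma_{n+1}}W_n+e^{-2\gamma_{n+1}}\psi_n$; the induction $W_{n+1}\leq e^{\gamma_{n+2}}$ now requires $\f{a_{n+2}}{a_{n+1}}e^{\gamma_{n+2}}\leq e^{\gamma_{n+1}}$, equivalently $\tfrac12(1+e^{-2\gamma_{n+1}})\leq\tfrac12(1+e^{-2\gamma_{n+2}})$, which holds because $\gamma_{n+1}\geq\gamma_{n+2}$. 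Lemma~\ref{L2.4} then goes through unchanged (the relevant coefficient is $\f{a_{n+1}}{a_{n+2}}e^{-\gamma_{n+1}-\gamma_{n+2}}\leq 1$), giving $\psi_n\leq n+1$ and therefore, as in Proposition~\ref{P2.5}, $e^{\sum_{j=1}^n\gamma_j}\leq p_n\leq(n+1)e^{\sum_{j=1}^n\gamma_j}$ for $1\leq n<N(x)$.

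Next, for $n>N(x)$ I would set $x/a_n=2\cos\kappa_n$ with $\kappa_n\in[0,\pi/2)$, so $\kappa_n\uparrow\kappa_\infty=\cos^{-1}(x/2)$. Since $b_n\equiv 0$, the matrix $A_n$ of \eqref{1.16} equals $\f{1}{a_n}\left(\begin{smallmatrix}x&-1\\a_n^2&0\end{smallmatrix}\right)$, with eigenvalues $e^{\pm i\kappa_n}$ and eigenvectors $\binom{1}{a_ne^{\mp i\kappa_n}}$, hence $A_n=Y_n(\kappa_n)V(\kappa_n)Y_n(\kappa_n)^{-1}$ with $V$ as in \eqref{2.25a} but now $Y_n(\kappa)=\left(\begin{smallmatrix}1&1\\a_ne^{-i\kappa}&a_ne^{i\kappa}\end{smallmatrix}\right)$. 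Telescoping as in \eqref{2.28}--\eqref{2.29}, the only new feature is that $Y_{j+1}^{-1}Y_j$ depends on $a_j$ as well as $\kappa_j$; a short calculation gives
\[
\|Y_{j+1}(\kappa_{j+1})^{-1}Y_j(\kappa_j)-\bdone\|\leq\f{C\,(|a_{j+1}-a_j|+|\kappa_{j+1}-\kappa_j|)}{\sin\kappa_{j+1}}.
\]
But $a_j=x/(2\cos\kappa_j)$ and $\cos\kappa_j\geq\cos\kappa_\infty=x/2$, so $|a_{j+1}-a_j|\leq\tfrac{2}{x}\sin(\kappa_{j+1})\,|\kappa_{j+1}-\kappa_j|$, and the right-hand side above is at most $C|\kappa_{j+1}-\kappa_j|/\sin\kappa_{j+1}+\tfrac{2C}{x}|\kappa_{j+1}-\kappa_j|$; the first piece is handled exactly as in Lemma~\ref{L2.7} (the product telescoping to $\kappa_\infty/\kappa_{N(x)+2}$ times an $O(1)$ factor) and the second contributes only $\exp(\tfrac{2C}{x}\kappa_\infty)=O(1)$. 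With $\|Y_k\|\leq 2$ and $\|Y_{N(x)+2}^{-1}\|\leq C/\sin\kappa_{N(x)+2}$, this bounds $\|T_n\|$, for $T_n$ the transfer matrix from $N(x)+2$ to $n$, by $C\kappa_\infty/(\kappa_{N+2}\sin\kappa_{N+2})$ uniformly in $n>N(x)+2$. Finally $x<2a_{N+1}$ (by the definition of $N(x)$), so $1-\cos\kappa_{N+2}=\tfrac{2a_{N+2}-x}{2a_{N+2}}>\tfrac{a_{N+2}-a_{N+1}}{a_{N+2}}\geq a_{N+2}-a_{N+1}$, and with $1-\cos t\leq t^2/2$ this yields $\kappa_{N+2}^2>2(a_{N+2}-a_{N+1})$, hence $\sup_n\|T_n\|\leq C(a_{N+2}-a_{N+1})^{-1}\equiv A(x)$ (I bound the remaining factor $\kappa_\infty=O((2-x)^{1/2})$ by a constant, which only enlarges $h$). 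Composing with the transfer from $N(x)-1$ to $N(x)+2$, a product of three matrices of norm bounded uniformly in $x$, using $\det T=1$ (so $\|T^{-1}\|=\|T\|$), and invoking the Proposition~\ref{P2.5} bound at $n=N(x)-1$ (where $e^{\sum_{j=1}^{N-1}\gamma_j}$ differs from $e^{g(x)}$ by a bounded factor since $\gamma_{N(x)}$ is small), I obtain, as in \eqref{2.46}--\eqref{2.47}, for all $n>N(x)$,
\[
C_1A(x)^{-2}e^{2g(x)}\leq p_n^2+p_{n+1}^2\leq C\,A(x)^2N(x)^2e^{2g(x)}.
\]
Feeding this into Corollary~\ref{C1.3} with $f_\pm=\exp(2(g\pm h))$ and $e^{h(x)}=CN(x)(a_{N(x)+2}-a_{N(x)+1})^{-1}$ yields \eqref{3.4}--\eqref{3.5}, while pure absolute continuity on $(-2,2)$ follows from Corollary~\ref{C1.3} applied on each compact subinterval (near $\pm 2$ by the above, and elsewhere directly, there being no turning point and $\|T_n\|$ simply bounded).

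The step I expect to be the main obstacle is the $a_n$-dependence of the diagonalizing matrices $Y_n(\kappa_n)$ in the oscillatory region. Kooman's telescoping in Section~\ref{s2} relied on $Y$ depending on $\kappa$ alone, and here one must verify that the extra $|a_{j+1}-a_j|$ term in the bound for $\|Y_{j+1}^{-1}Y_j-\bdone\|$ neither destroys the convergence of $\prod_j\|Y_{j+1}^{-1}Y_j\|$ nor spoils the sharp $\kappa_\infty/\kappa_{N(x)+2}$ decay rate; this is exactly what the relation $a_j=x/(2\cos\kappa_j)$, with $\cos\kappa_j$ bounded below by $x/2$ throughout $\{n>N(x)\}$, provides, by turning $|a_{j+1}-a_j|$ into a harmless multiple of $\sin(\kappa_{j+1})|\kappa_{j+1}-\kappa_j|$. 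By contrast, the bookkeeping in the hyperbolic region is benign: the extra factors $a_n/a_{n+1}$ are $\leq 1$, and the inequalities among the $\gamma_j$ that one needs follow from monotonicity of $a_n$ together with the identity $a_{n+1}/a_n=\cosh\gamma_n/\cosh\gamma_{n+1}$.
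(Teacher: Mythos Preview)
Your hyperbolic-region argument is exactly the paper's: the recursion for $\psi_n$, the modified $W_n=e^{\gamma_{n+1}}\psi_n-\tfrac{a_n}{a_{n+1}}e^{-\gamma_n}\psi_{n-1}$, and the key inequality $\tfrac{a_{n+2}}{a_{n+1}}e^{\gamma_{n+2}}\leq e^{\gamma_{n+1}}$ are precisely Lemmas~3.2--3.5 (your reformulation via $\tfrac12(1+e^{-2\gamma})$ is the content of Lemma~3.3).

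In the oscillatory region you diverge from the paper. The paper does \emph{not} adapt Kooman's matrix diagonalization here; it switches to the scalar Pr\"ufer variable $\Phi_n=p_n-e^{-i\kappa_n}p_{n-1}$ and, using $\tfrac{a_n}{a_{n+1}}=\tfrac{\cos\kappa_{n+1}}{\cos\kappa_n}$ together with the identity $e^{i\kappa}\cos\kappa=\tfrac12(1+e^{2i\kappa})$, gets
\[
\frac{|\Phi_{n+1}|}{|\Phi_n|}\leq 1+\frac{|2\kappa_{n+1}-2\kappa_n|}{\sin(2\kappa_n)},
\]
to which Lemma~2.7 applies verbatim (with $\kappa$ replaced by $2\kappa$), yielding the factor $\kappa_\infty/\kappa_{N+2}$ and hence the exact power $(a_{N+2}-a_{N+1})^{-1}$ in \eqref{3.5}. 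Your matrix route is also viable, but the step ``the first piece is handled exactly as in Lemma~2.7, the product telescoping to $\kappa_\infty/\kappa_{N+2}$'' is not justified as written: the paper's own remark after Lemma~2.6 stresses that the constant in front of $|\kappa_{j+1}-\kappa_j|/\sin\kappa_j$ must be $1$, and your triangle-inequality bound produces $C=1/a_1>1$, which telescopes only to $(\kappa_\infty/\kappa_{N+2})^{C}$ and would give $(a_{N+2}-a_{N+1})^{-(C+1)/2}$ rather than \eqref{3.5}. The fix within your framework is to compute the entries of $Y_{j+1}^{-1}Y_j-\bdone$ exactly: since $a_je^{i\kappa_j}=\tfrac{x}{2}(1+i\tan\kappa_j)$, each entry has modulus $\tfrac{\tan\kappa_{j+1}-\tan\kappa_j}{2\tan\kappa_{j+1}}$, so $\|Y_{j+1}^{-1}Y_j\|\leq \tan\kappa_{j+1}/\tan\kappa_j$, which telescopes cleanly to $\tan\kappa_\infty/\tan\kappa_{N+2}\sim\kappa_\infty/\kappa_{N+2}$. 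With that sharpening your argument goes through and yields \eqref{3.5}; the paper's scalar choice simply sidesteps the $a_n$-dependence of the diagonalizing matrices and reaches the clean constant more directly.
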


The proof will closely mimic the proof of Theorem~\ref{T2.1}, so we will only indicate the changes.
By symmetry, without loss, we can suppose $x>0$. The recursion relation becomes
\begin{equation} \lb{3.6}
p_{n+1}(x) = (e^{\gamma_{n+1}(x)} + e^{-\gamma_{n+1}(x)}) p_n(x) - \f{a_n}{a_{n+1}}\,
p_{n-1}(x)
\end{equation}
where we note, by \eqref{3.3}, that
\begin{equation} \lb{3.7}
\f{a_n}{a_{n+1}} = \f{\cosh(\gamma_{n+1}(x))}{\cosh(\gamma_n(x))}
\end{equation}

Define $\psi_n(x)$ by \eqref{2.8}, so \eqref{2.8a} becomes
\begin{equation} \lb{3.8}
\psi_{n+1}(x) = (1+e^{-2\gamma_{n+1}(x)}) \psi_n(x) - \f{a_n}{a_{n+1}}\,
e^{-(\gamma_n(x) + \gamma_{n+1}(x))} \psi_{n-1}(x)
\end{equation}
\eqref{2.9} still holds.

\begin{lemma}\lb{L3.2} $\psi_{n+1}\geq\psi_n$, so $\psi_n(x) \geq 1$ for $n\geq 0$.
\end{lemma}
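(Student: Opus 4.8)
The plan is to follow the proof of Lemma~\ref{L2.2} almost verbatim; the single genuinely new ingredient is the factor $a_n/a_{n+1}$ that now appears in the three-term recursion \eqref{3.8}, and the point will be that it is harmless because it is $\leq 1$. As a preliminary, since the Jacobi parameters satisfy $a_n\leq a_{n+1}$, for $x>0$ the quantity $\abs{x}/a_n = 2\cosh(\gamma_n(x))$ is nonincreasing in $n$, so $0\leq\gamma_{n+1}\leq\gamma_n$; equivalently, by \eqref{3.7}, $a_n/a_{n+1} = \cosh(\gamma_{n+1})/\cosh(\gamma_n)\leq 1$.

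Next I would rewrite the difference $\psi_{n+1}-\psi_n$ coming from \eqref{3.8} in the manner of \eqref{2.13}:
\begin{align*}
\psi_{n+1}-\psi_n &= e^{-2\gamma_{n+1}}\psi_n - \frac{a_n}{a_{n+1}}\,e^{-(\gamma_n+\gamma_{n+1})}\psi_{n-1} \\
&= e^{-2\gamma_{n+1}}(\psi_n-\psi_{n-1}) + e^{-\gamma_{n+1}}\Bigl(e^{-\gamma_{n+1}} - \frac{a_n}{a_{n+1}}\,e^{-\gamma_n}\Bigr)\psi_{n-1}.
\end{align*}
The key sign is that the coefficient in the last term is nonnegative: $\gamma_{n+1}\leq\gamma_n$ gives $e^{-\gamma_n}\leq e^{-\gamma_{n+1}}$, while $a_n/a_{n+1}\leq 1$, so $\frac{a_n}{a_{n+1}}e^{-\gamma_n}\leq e^{-\gamma_{n+1}}$. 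This is exactly where the monotonicity of $a_n$ in the favorable direction is used, and it is the only place where the estimate could have gone the wrong way.

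The induction then proceeds as in Lemma~\ref{L2.2}. For $n=0$ the term multiplying $\psi_{n-1}=\psi_{-1}=0$ drops out and $\psi_1-\psi_0 = e^{-2\gamma_1}(\psi_0-\psi_{-1}) = e^{-2\gamma_1}\geq 0$; together with $\psi_0=1$ this also gives $\psi_1\geq 0$. Assuming $\psi_n\geq\psi_{n-1}\geq 0$, both summands above are nonnegative, so $\psi_{n+1}\geq\psi_n\geq 0$. Hence $\psi_{n+1}\geq\psi_n$ for all $n$ in the relevant range, and iterating down to $\psi_0=1$ yields $\psi_n(x)\geq 1$.

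I do not anticipate a serious obstacle: the whole content is the bookkeeping observation that the extra ratio $a_n/a_{n+1}\leq 1$ cooperates with, rather than opposes, the inequality $\gamma_{n+1}\leq\gamma_n$. The only mild care needed is, as in Section~\ref{s2}, to keep track of the range $0\leq n<N(x)$ on which \eqref{3.2}--\eqref{3.3} actually provide the quantities $\gamma_n$.
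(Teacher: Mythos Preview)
Your proof is correct and follows essentially the same approach as the paper: both derive the identity \eqref{3.9} for $\psi_{n+1}-\psi_n$, observe that $a_n/a_{n+1}\leq 1$ together with $\gamma_{n+1}\leq\gamma_n$ makes the coefficient of $\psi_{n-1}$ nonnegative, and then induct. Your write-up is slightly more explicit about the base case, but the argument is the same.
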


\begin{proof} We still have \eqref{2.12}, and \eqref{2.13} becomes
\begin{equation} \lb{3.9}
\psi_{n+1} -\psi_n =e^{-2\gamma_{n+1}}(\psi_n -\psi_{n-1}) + e^{-\gamma_{n+1}}
\biggl(e^{-\gamma_{n+1}} - \f{a_n}{a_{n+1}}\, e^{-\gamma_n}\biggr) \psi_{n-1}
\end{equation}
Since $a_n\leq a_{n+1}$, $\f{a_n}{a_{n+1}}<1$, and so
\[
\f{a_n}{a_{n+1}} \, e^{-\gamma_n} \leq e^{-\gamma_n}\leq e^{-\gamma_{n+1}}
\]
Thus, by \eqref{2.8a}, $\psi_{n+1} -\psi_n \geq 0$ and $\psi_{n+1}\geq 0$ inductively.
\end{proof}

\begin{lemma}\lb{L3.3}
\begin{equation} \lb{3.10}
e^{\gamma_{n+2}}\leq e^{\gamma_{n+1}}\,\f{\cosh(\gamma_{n+2})}{\cosh (\gamma_{n+1})}
\end{equation}
\end{lemma}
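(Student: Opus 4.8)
The plan is to prove \eqref{3.10} directly from the monotonicity relations already established, namely \eqref{2.12} (which gives $0 \le \gamma_{n+2} \le \gamma_{n+1}$) and the explicit identity \eqref{3.7} relating consecutive ratios $a_n/a_{n+1}$ to the hyperbolic cosines of the $\gamma$'s. Rewriting \eqref{3.10}, the claim is equivalent to
\[
e^{\gamma_{n+2}-\gamma_{n+1}} \le \f{\cosh(\gamma_{n+2})}{\cosh(\gamma_{n+1})},
\]
i.e. $e^{\gamma_{n+2}-\gamma_{n+1}}\cosh(\gamma_{n+1}) \le \cosh(\gamma_{n+2})$, with both sides positive. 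So the whole statement reduces to a one-variable inequality about the function $\gamma \mapsto e^{-\gamma}\cosh(\gamma) = \tfrac12(1 + e^{-2\gamma})$.

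The key observation is that $f(\gamma) := e^{-\gamma}\cosh(\gamma) = \tfrac12(1+e^{-2\gamma})$ is a \emph{decreasing} function of $\gamma \ge 0$. Since \eqref{2.12} gives $\gamma_{n+2} \le \gamma_{n+1}$, we get $f(\gamma_{n+2}) \ge f(\gamma_{n+1})$, that is,
\[
e^{-\gamma_{n+2}}\cosh(\gamma_{n+2}) \ge e^{-\gamma_{n+1}}\cosh(\gamma_{n+1}).
\]
Dividing by $\cosh(\gamma_{n+1}) > 0$ and multiplying by $e^{\gamma_{n+2}} > 0$ yields exactly
\[
\cosh(\gamma_{n+2}) \ge e^{\gamma_{n+2}-\gamma_{n+1}}\cosh(\gamma_{n+1}),
\]
which is \eqref{3.10} after rearranging; equivalently, multiply through by $e^{\gamma_{n+1}}$ to display it as $e^{\gamma_{n+1}}\cosh(\gamma_{n+2}) \ge e^{\gamma_{n+2}}\cosh(\gamma_{n+1})$. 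That is the full argument: it is really just monotonicity of $1+e^{-2\gamma}$ combined with $\cosh$ being positive, applied to the already-known ordering of $\gamma_{n+1}$ and $\gamma_{n+2}$.

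There is no serious obstacle here; the only thing to be slightly careful about is bookkeeping the range of validity, i.e. that $\gamma_{n+1}$ and $\gamma_{n+2}$ are both defined, which requires $n+2 \le N(x)$ (the same constraint under which \eqref{2.12} was invoked), and that one is genuinely in the regime $\gamma_j \ge 0$ so the monotonicity of $f$ on $[0,\infty)$ applies. One could alternatively package the inequality through the identity \eqref{3.7}, writing $\cosh(\gamma_{n+2})/\cosh(\gamma_{n+1}) = a_{n+1}/a_{n+2}$ and noting $a_{n+1} \le a_{n+2}$... wait, the hypothesis \eqref{3.1} is $a_{n+1} \le a_n$, so in the notation of \eqref{3.7} with indices shifted, $\cosh(\gamma_{n+2})/\cosh(\gamma_{n+1}) = a_{n+1}/a_{n+2} \ge 1$; combined with $e^{\gamma_{n+2}} \le e^{\gamma_{n+1}}$ this again gives \eqref{3.10}. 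Either route is a two-line proof, and this lemma is purely the analog of \eqref{2.12}'s consequence $e^{\gamma_{n+2}} \le e^{\gamma_{n+1}}$ used in Lemma~\ref{L2.3}, now in the $a_n$-varying setting where the recursion carries the extra factor $a_n/a_{n+1}$.
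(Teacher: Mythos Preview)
Your main argument is correct and is essentially the paper's proof in different clothing: the paper multiplies out $e^{\gamma_{n+2}}\cosh(\gamma_{n+1})\le e^{\gamma_{n+1}}\cosh(\gamma_{n+2})$ as $e^{\gamma_{n+2}+\gamma_{n+1}}+e^{\gamma_{n+2}-\gamma_{n+1}}\le e^{\gamma_{n+2}+\gamma_{n+1}}+e^{\gamma_{n+1}-\gamma_{n+2}}$, cancels, and reduces to $\gamma_{n+2}\le\gamma_{n+1}$, which is exactly your observation that $\gamma\mapsto\tfrac12(1+e^{-2\gamma})$ is decreasing.

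One caution about your ``alternative route'': the displayed hypothesis \eqref{3.1} contains a typo (compare the introduction and the proof of Lemma~\ref{L3.2}, which uses $a_n\le a_{n+1}$); the $a_n$ are \emph{increasing} to $1$, so $a_{n+1}/a_{n+2}\le 1$ and hence $\cosh(\gamma_{n+2})/\cosh(\gamma_{n+1})\le 1$. Thus the shortcut ``$e^{\gamma_{n+2}}\le e^{\gamma_{n+1}}$ times something $\ge 1$'' does not go through; the inequality genuinely needs the cancellation you carried out in your main argument.
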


\begin{proof} This is equivalent to
\begin{equation} \lb{3.11}
e^{\gamma_{n+2}+\gamma_{n+1}} + e^{\gamma_{n+2}-\gamma_{n+1}} \leq e^{\gamma_{n+2} + \gamma_{n+1}}
+ e^{\gamma_{n+1} - \gamma_{n+2}}
\end{equation}
so to $\gamma_{n+2} -\gamma_{n+1}\leq 0$, so to \eqref{2.12}.
\end{proof}

\begin{lemma}\lb{L3.4} Define
\begin{equation} \lb{3.12}
W_n=e^{\gamma_{n+1}} \psi_n -\f{a_n}{a_{n+1}}\, e^{-\gamma_n} \psi_{n-1}
\end{equation}
Then
\begin{equation} \lb{3.13}
W_n\leq e^{\gamma_{n+1}}
\end{equation}
\end{lemma}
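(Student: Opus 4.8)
The plan is to mimic the proof of Lemma~\ref{L2.3} exactly, inserting the ratio factors $a_n/a_{n+1}$ where they occur and using Lemma~\ref{L3.3} in place of the inequality $e^{\gamma_{n+2}}\le e^{\gamma_{n+1}}$ that was available in the constant-$a_n$ case. First I would check the base case: at $n=0$, $\psi_{-1}=0$ and $\psi_0=1$, so $W_0=e^{\gamma_1}\psi_0-\frac{a_0}{a_1}e^{-\gamma_0}\psi_{-1}=e^{\gamma_1}$, which satisfies \eqref{3.13} with equality. (If the indexing starts at $n=1$, one uses $\psi_0=1,\psi_{-1}=0$ to get $W_1 = e^{\gamma_2}\psi_1 - \frac{a_1}{a_2}e^{-\gamma_1}\psi_0 = e^{\gamma_2}(1+e^{-2\gamma_2}) - \frac{a_1}{a_2}e^{-\gamma_1}$; using \eqref{3.7} this is $e^{\gamma_2}+e^{-\gamma_2}-e^{-\gamma_1}\frac{\cosh\gamma_2}{\cosh\gamma_1}\le e^{\gamma_2}$ since $e^{-\gamma_2}\le e^{-\gamma_1}\frac{\cosh\gamma_2}{\cosh\gamma_1}$ by \eqref{3.10}, which starts the induction.)

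Next I would derive the recursion for $W_n$. From \eqref{3.8} and the definition \eqref{3.12}, solve for $\psi_{n+1}$ in terms of $W_n$ and $\psi_n$: \eqref{3.8} gives $\psi_{n+1} = e^{-\gamma_{n+1}}W_n + e^{-2\gamma_{n+1}}\psi_n$, just as in the proof of Lemma~\ref{L2.4}. Substituting this into $W_{n+1} = e^{\gamma_{n+2}}\psi_{n+1} - \frac{a_{n+1}}{a_{n+2}}e^{-\gamma_{n+1}}\psi_n$ yields
\begin{align*}
W_{n+1} &= e^{\gamma_{n+2}}\bigl(e^{-\gamma_{n+1}}W_n + e^{-2\gamma_{n+1}}\psi_n\bigr) - \tfrac{a_{n+1}}{a_{n+2}}e^{-\gamma_{n+1}}\psi_n\\
&= e^{\gamma_{n+2}-\gamma_{n+1}}W_n + e^{-\gamma_{n+1}}\Bigl(e^{\gamma_{n+2}-\gamma_{n+1}} - \tfrac{a_{n+1}}{a_{n+2}}\Bigr)\psi_n.
\end{align*}
The plan is then to bound the second term by $0$: by \eqref{3.7}, $\frac{a_{n+1}}{a_{n+2}} = \frac{\cosh\gamma_{n+2}}{\cosh\gamma_{n+1}}$, and Lemma~\ref{L3.3} (in the form \eqref{3.10}, i.e. $e^{\gamma_{n+2}-\gamma_{n+1}}\le \frac{\cosh\gamma_{n+2}}{\cosh\gamma_{n+1}}$) shows the parenthesized factor is $\le 0$; since $\psi_n\ge 0$ by Lemma~\ref{L3.2} and $e^{-\gamma_{n+1}}>0$, that term is nonpositive. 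Hence $W_{n+1}\le e^{\gamma_{n+2}-\gamma_{n+1}}W_n$. Combined with the inductive hypothesis $W_n\le e^{\gamma_{n+1}}$, this gives $W_{n+1}\le e^{\gamma_{n+2}}$, closing the induction.

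The only real obstacle is organizational rather than mathematical: one must make sure the two facts being borrowed — namely $\frac{a_n}{a_{n+1}} = \frac{\cosh\gamma_{n+1}}{\cosh\gamma_n}$ from \eqref{3.7} and the monotonicity $\gamma_{n+2}\le\gamma_{n+1}$ packaged into Lemma~\ref{L3.3} — line up with the correct indices after the substitution, so that the coefficient of $\psi_n$ really has the sign claimed. There is no term that grows, so no delicate estimate is needed; this is why the constant in \eqref{3.5} only involves $N(x)$ and the gap $a_{N(x)+2}-a_{N(x)+1}$, exactly as in Theorem~\ref{T2.1}. I would then proceed to the analogues of Lemma~\ref{L2.4} and Proposition~\ref{P2.5} (using $e^{-\gamma_{n+2}}W_{n+1}\le 1$ and $\frac{a_n}{a_{n+1}}e^{-(\gamma_{n+1}+\gamma_{n+2})}\le 1$ to get $\psi_{n+1}\le 1+\psi_n$, hence $\psi_n\le n+1$), and finally to the transfer-matrix estimate on $[N(x)+2,\infty)$ with $\kappa_n$ defined by $\frac{|x|}{a_n}=2\cos\kappa_n$, where \eqref{3.2} in place of \eqref{1.18}/\eqref{1.19} and the gap $a_{N+2}-a_{N+1}$ in place of $b_{N+2}-b_{N+1}$ reproduce the bound \eqref{3.5}.
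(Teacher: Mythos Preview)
Your proof is correct and follows essentially the same route as the paper: the base case $W_0=e^{\gamma_1}$, the identity $\psi_{n+1}=e^{-\gamma_{n+1}}W_n+e^{-2\gamma_{n+1}}\psi_n$, the recursion \eqref{3.14} for $W_{n+1}$, and the use of \eqref{3.7} together with Lemma~\ref{L3.3} to show the $\psi_n$-coefficient is nonpositive are exactly what the paper does. The additional material you include about Lemma~\ref{L3.5} and the transfer-matrix region is extraneous to the present lemma but also matches the paper's subsequent arguments.
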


\begin{proof} \eqref{3.13} holds for $n=0$ by \eqref{3.12} for $n=0$, so we can try an inductive
proof. The analog of \eqref{2.16a} is
\begin{equation} \lb{3.14}
W_{n+1} =e^{(\gamma_{n+2} -\gamma_{n+1})} W_n + e^{-\gamma_{n+1}}
\biggl(e^{(\gamma_{n+2}-\gamma_{n+1})} - \f{a_{n+1}}{a_{n+2}}\biggr) \psi_n
\end{equation}
By \eqref{3.7} and \eqref{3.10},
\[
e^{(\gamma_{n+2} -\gamma_{n+1})} - \f{a_{n+1}}{a_{n+2}} \leq 0
\]
so \eqref{3.14} says
\[
W_{n+1} \leq e^{(\gamma_{n+2} -\gamma_{n+1})} W_n \leq e^{\gamma_{n+2}}
\]
by induction.
\end{proof}

\begin{lemma}\lb{L3.5} $\psi_{n+1}\leq 1 +\psi_n$ so inductively, $\psi_n\leq n+1$.
\end{lemma}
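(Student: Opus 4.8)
The plan is to mirror the proof of Lemma~\ref{L2.4} almost verbatim; the one new feature, the ratio $a_{n+1}/a_{n+2}$, turns out to help rather than hurt.

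First I would solve the defining relation \eqref{3.12}, written at index $n+1$,
\[
W_{n+1} = e^{\gamma_{n+2}}\psi_{n+1} - \f{a_{n+1}}{a_{n+2}}\, e^{-\gamma_{n+1}}\psi_n,
\]
for $\psi_{n+1}$, obtaining
\[
\psi_{n+1} = e^{-\gamma_{n+2}} W_{n+1} + \f{a_{n+1}}{a_{n+2}}\, e^{-(\gamma_{n+1}+\gamma_{n+2})}\psi_n .
\]
By Lemma~\ref{L3.4}, $W_{n+1}\leq e^{\gamma_{n+2}}$, so the first term is at most $1$. For the second term, monotonicity of $a_n$ gives $a_{n+1}/a_{n+2}\leq 1$ (equivalently, via \eqref{3.7} and \eqref{2.12}, $\cosh(\gamma_{n+2})\leq\cosh(\gamma_{n+1})$), while $\gamma_j\geq 0$ gives $e^{-(\gamma_{n+1}+\gamma_{n+2})}\leq 1$, and $\psi_n\geq 0$ by Lemma~\ref{L3.2}; hence the second term is at most $\psi_n$. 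This yields $\psi_{n+1}\leq 1+\psi_n$ in the range where $W_{n+1}$ is defined, i.e.\ for $0\leq n\leq N(x)-2$.

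Finally, iterating from $\psi_{-1}=0$, $\psi_0=1$ gives $\psi_n\leq n+1$ for $0\leq n<N(x)$, exactly as in Lemma~\ref{L2.4}. I do not expect any genuine obstacle: the argument is a routine transcription, and the only point worth a moment's care is that this is the step where the monotonicity of $a_n$ (not merely $a_n\to1$) is used — and it enters with the favorable sign, so nothing is lost.
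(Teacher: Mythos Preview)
Your proof is correct and is essentially identical to the paper's own argument: both solve \eqref{3.12} at index $n+1$ for $\psi_{n+1}$, bound the first term by $1$ via Lemma~\ref{L3.4}, and bound the second by $\psi_n$ using $a_{n+1}/a_{n+2}\leq 1$ and $\gamma_j\geq 0$. Your added remarks on the index range and the explicit invocation of $\psi_n\geq 0$ are accurate refinements.
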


\begin{proof} By \eqref{3.12} and \eqref{3.13},
\begin{align*}
\psi_{n+1} &= e^{-\gamma_{n+2}} W_{n+1} + \f{a_{n+1}}{a_{n+2}}\, e^{-\gamma_{n+2} - \gamma_{n+1}} \psi_n \\
&\leq 1 + \psi_n
\end{align*}
since $\f{a_{n+1}}{a_{n+2}}\leq 1$.
\end{proof}

If now
\begin{equation} \lb{3.15}
\eta_n(x) = p_{n-1}(x)^2 + a_{n}^2 p_n(x)^2
\end{equation}
then we have proven \eqref{2.20} for large $n$.

To control the region $n\geq N(x)+2$,
we use the scalar variable technique from the end of Section~\ref{s2}.
Define $\kappa_n$ for $n\geq N(x)+1$ by (recall $x>0$)
\begin{equation} \lb{3.16}
\f{x}{a_n} = 2\cos (\kappa_n(x))
\end{equation}
so $a_{n+1}\geq a_n$ implies
\begin{equation} \lb{3.16a}
\kappa_n(x) \leq \kappa_{n+1}(x)
\end{equation}
Define
\begin{equation} \lb{3.17}
\Phi_n =p_n -e^{-i\kappa_n} p_{n-1}
\end{equation}
Then
\begin{lemma}\lb{L3.6}
\begin{SL}
\item[{\rm{(i)}}]
\begin{equation} \lb{3.18}
\abs{p_{n-1}} \leq \f{\abs{\Phi_n}}{\sin(\kappa_n)}
\end{equation}
\item[{\rm{(ii)}}]
\begin{align}
\f{\abs{\Phi_{n+1}}}{\abs{\Phi_n}}
& \leq 1 + \f{\abs{e^{i\kappa_n}\cos(\kappa_n) - e^{i\kappa_{n+1}}\cos(\kappa_{n+1})}}
{\cos(\kappa_n)\sin(\kappa_n)} \lb{3.19} \\
&\leq 1 + \f{\abs{\kappa_{n+1} -\kappa_n}}{\f12 \sin(2\kappa_n)} \lb{3.20}
\end{align}
\end{SL}
\end{lemma}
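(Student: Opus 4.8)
The plan is to establish Lemma~\ref{L3.6} by imitating the scalar-variable computation from the end of Section~\ref{s2}, with the single twist that the $b_n\equiv 0$, monotone-$a_n$ recursion \eqref{3.6} carries an extra factor $a_n/a_{n+1}\ne 1$ in the $p_{n-1}$ coefficient. For part~(i), since $p_j$ is real and $\kappa_n\in(0,\tfrac\pi2)$, taking imaginary parts in \eqref{3.17} gives $\Ima(-\Phi_n)=\sin(\kappa_n)\,p_{n-1}$, hence $\sin(\kappa_n)\abs{p_{n-1}}=\abs{\Ima\Phi_n}\le\abs{\Phi_n}$, which is exactly \eqref{3.18}. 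This is the trivial step and mirrors \eqref{2.49}.

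For part~(ii) the idea is to rewrite the three-term recursion so that $\Phi_{n+1}$ appears as $e^{i\kappa_{n+1}}$ times $\Phi_n$ plus an error proportional to $p_{n-1}$. Starting from \eqref{3.6} with $b_n\equiv 0$ and using \eqref{3.16}, write $p_{n+1}=(e^{i\kappa_{n+1}}+e^{-i\kappa_{n+1}})p_n-\tfrac{a_n}{a_{n+1}}p_{n-1}$, but now express $a_n/a_{n+1}$ via \eqref{3.7} (in the $\kappa$ variables this reads $a_n/a_{n+1}=\cos(\kappa_{n+1})/\cos(\kappa_n)$, by the same identity as \eqref{3.7} with $\cos$ in place of $\cosh$). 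Then compute
\[
\Phi_{n+1}=p_{n+1}-e^{-i\kappa_{n+1}}p_n
= e^{i\kappa_{n+1}}p_n-\f{\cos(\kappa_{n+1})}{\cos(\kappa_n)}\,p_{n-1}
= e^{i\kappa_{n+1}}\Phi_n+\Bigl(e^{i\kappa_{n+1}}e^{-i\kappa_n}-\f{\cos(\kappa_{n+1})}{\cos(\kappa_n)}\Bigr)p_{n-1}.
\]
The coefficient of $p_{n-1}$ simplifies: $e^{i\kappa_{n+1}}e^{-i\kappa_n}-\cos(\kappa_{n+1})/\cos(\kappa_n)=\bigl(e^{i(\kappa_{n+1}-\kappa_n)}\cos(\kappa_n)-\cos(\kappa_{n+1})\bigr)/\cos(\kappa_n)$, and multiplying numerator and denominator by $e^{i\kappa_n}$ turns the numerator into $e^{i\kappa_{n+1}}\cos(\kappa_n)-e^{i\kappa_n}\cos(\kappa_{n+1})$, i.e.\ the difference $e^{i\kappa_n}\cos(\kappa_n)-e^{i\kappa_{n+1}}\cos(\kappa_{n+1})$ up to sign and a unimodular factor. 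Taking absolute values, $\abs{\Phi_{n+1}}\le\abs{\Phi_n}+\dfrac{\abs{e^{i\kappa_n}\cos(\kappa_n)-e^{i\kappa_{n+1}}\cos(\kappa_{n+1})}}{\cos(\kappa_n)}\,\abs{p_{n-1}}$, and then \eqref{3.18} converts $\abs{p_{n-1}}$ into $\abs{\Phi_n}/\sin(\kappa_n)$, giving \eqref{3.19}.

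For the passage from \eqref{3.19} to \eqref{3.20}, observe that $e^{i\kappa}\cos\kappa=\tfrac12(e^{2i\kappa}+1)$, so $e^{i\kappa_n}\cos(\kappa_n)-e^{i\kappa_{n+1}}\cos(\kappa_{n+1})=\tfrac12(e^{2i\kappa_n}-e^{2i\kappa_{n+1}})$, whose modulus is at most $\tfrac12\cdot\abs{2\kappa_{n+1}-2\kappa_n}=\abs{\kappa_{n+1}-\kappa_n}$ (using $\abs{e^{i\alpha}-e^{i\beta}}\le\abs{\alpha-\beta}$); dividing by $\cos(\kappa_n)\sin(\kappa_n)=\tfrac12\sin(2\kappa_n)$ yields \eqref{3.20}. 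The only point needing a word of care is that $\kappa_n\in[0,\tfrac\pi2)$ so that $\cos(\kappa_n)>0$ and the division is legitimate; this is guaranteed for $n\ge N(x)+1$ by the definition \eqref{3.2}/\eqref{3.16}, exactly as $\kappa_n<\tfrac\pi2$ was used in Section~\ref{s2}. I expect no real obstacle here — the content is a direct transcription of \eqref{2.48}--\eqref{2.52} with the bookkeeping of the $a_n/a_{n+1}$ factor; the mildly fiddly part is just tracking the unimodular factors so that the numerator comes out in the symmetric form $e^{i\kappa_n}\cos(\kappa_n)-e^{i\kappa_{n+1}}\cos(\kappa_{n+1})$ displayed in \eqref{3.19}, which is what makes the half-angle simplification to $\tfrac12(e^{2i\kappa_n}-e^{2i\kappa_{n+1}})$ transparent.
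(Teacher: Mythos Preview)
Your proof is correct and follows essentially the same route as the paper: part~(i) by taking the imaginary part of $\Phi_n$, and part~(ii) by expanding the recursion, using $a_n/a_{n+1}=\cos(\kappa_{n+1})/\cos(\kappa_n)$, and then the identity $e^{i\kappa}\cos\kappa=\tfrac12(e^{2i\kappa}+1)$ to pass to \eqref{3.20}. Your observation that the error coefficient you obtain, $e^{i\kappa_{n+1}}\cos(\kappa_n)-e^{i\kappa_n}\cos(\kappa_{n+1})=i\sin(\kappa_{n+1}-\kappa_n)$, differs from the displayed $e^{i\kappa_n}\cos(\kappa_n)-e^{i\kappa_{n+1}}\cos(\kappa_{n+1})$ only by the unimodular factor $-e^{i(\kappa_n+\kappa_{n+1})}$ is exactly right, so the moduli agree and \eqref{3.19} follows as stated.
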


\begin{proof} (i) This comes from $\abs{\Ima \Phi_n} =\sin (\kappa_n)(p_{n-1})$.

\smallskip
(ii) From
\[
p_{n+1} = (e^{i\kappa_{n+1}} + e^{-i\kappa_{n+1}}) p_n - \f{a_n}{a_{n+1}}\, p_{n-1}
\]
we obtain
\begin{equation} \lb{3.21}
\abs{\Phi_{n+1} -e^{i\kappa_{n+1}} \Phi_n} = \biggl| e^{i\kappa_n} - \f{a_n}{a_{n+1}}
\, e^{i\kappa_{n+1}}\biggr|\, p_{n-1}
\end{equation}

By \eqref{3.16},
\begin{equation} \lb{3.22}
\f{a_n}{a_{n+1}} = \f{\cos (\kappa_{n+1})}{\cos(\kappa_n)}
\end{equation}
so \eqref{3.21} and \eqref{3.18} imply \eqref{3.19}. This in turn implies \eqref{3.20} since
\begin{equation} \lb{3.23}
e^{i\kappa_n} \cos(\kappa_n) -e^{i\kappa_{n+1}} \cos(\kappa_{n+1}) = \tfrac12\,
(e^{2i\kappa_n} - e^{2i\kappa_{n+1}})
\end{equation}
\end{proof}

With this formula, we can mimic the proof of Theorem~\ref{T2.1} to complete the proof of
Theorem~\ref{T3.1}.

%%%%%%%%%%%%%%%%%%%%%%%%%%%%%%%%%%%%%%%%%%%%%
\section{Schr\"odinger Operators} \lb{s4}
%%%%%%%%%%%%%%%%%%%%%%%%%%%%%%%%%%%%%%%%%%%%%

In this section, we consider Schr\"odinger operators $H=-\f{d^2}{dx^2}+V(x)$ on
$L^2 ([0,\infty))$ where one places $u(0)=0$ boundary conditions. $H$ is unitarily
equivalent to multiplication by $E$ on $L^2 (\bbR,d\mu(E))$, where $d\mu$ is the conventional
spectral measure (see \cite{CL85,LG75,Mar}). If $u(x,E)$ obeys
\begin{equation} \lb{4.1}
-u'' + Vu=Eu \qquad u(0,E)=0,\,\, u'(0,E)=1
\end{equation}
then Carmona's formula \cite{Carm} takes the form
\begin{equation} \lb{4.2}
\f{\pi^{-1} dE}{(\abs{u(x,E)}^2 + \abs{u'(x,E)}^2)} \overset{w}{\longrightarrow} d\mu(E)
\end{equation}
In particular, if uniformly in compact subsets of $E\in (0,\infty)$,
\begin{align}
\exp(2(g(E)-h(E))) &\leq \liminf_{x\to\infty}\, (\abs{u(x,E)}^2 + \abs{u'(x,E)}^2) \notag \\
&\leq \limsup_{x\to\infty}\, (\abs{u(x,E)}^2 + \abs{u'(x,E)}^2) \notag \\
&\leq \exp(2(g(E)+h(E))) \lb{4.3}
\end{align}
then $d\mu$ is purely absolutely continuous on $(0,\infty)$, $d\mu(E)=e^{-2Q(E)}\, dE$, and
\begin{equation} \lb{4.4}
\abs{Q(E)-g(E)}\leq h(E)
\end{equation}

We want to assume the following conditions on $V$:
\begin{SL}
\item[(a)] $V$ is $C^1$ on $[0,\infty)$.
\item[(b)] $V$ is positive and strictly monotone decreasing on $[0,\infty)$. Indeed,
\begin{equation} \lb{4.5}
V'(x) <0
\end{equation}
\item[(c)]
\begin{equation} \lb{4.5a}
\lim_{x\to\infty}\, V(x) =0
\end{equation}
\end{SL}
Of course, the canonical example is
\begin{equation} \lb{4.6}
V(x)=(x+x_0)^{-\beta}
\end{equation}

Our main result in this section is:

\begin{theorem}\lb{T4.1}
Let $V$ obey {\rm{(a), (b), (c)}} so $d\mu(E)=e^{-2Q(E)}\, dE$. Define
for $E<V(0)$,
\[
N(E)=V^{-1}(E)
\]
so
\begin{equation}\lb{4.8}
\begin{alignedat}{2}
V(x) & >E \qquad &&\text{if }\, x<N(E) \\
V(x) & <E \qquad &&\text{if }\, x>N(E)
\end{alignedat}
\end{equation}
For $x<N(E)$, define
\begin{equation} \lb{4.9}
\gamma(x,E)=(V(x)-E)^{\f12}
\end{equation}
Then \eqref{4.4} holds where for $E<V(0)$,
\begin{equation} \lb{4.10}
g(E)=\int_0^{N(E)} \gamma(x,E)\, dx
\end{equation}
and for $E<V(0)$,
\begin{equation} \lb{4.11}
e^{h(E)} =C N(E)\, (V(N(E)) - V(N(E)+1))^{-1} E^{\f12}
\end{equation}
\end{theorem}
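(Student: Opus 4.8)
The plan is to mimic the strategy of Theorems~\ref{T2.1} and \ref{T3.1}, now in the continuum setting, with the solution $u(x,E)$ of \eqref{4.1} playing the role of $p_n$. First I would set up the analog of the classically forbidden region $\{x < N(E)\}$. On that region, write the equation as $u'' = (V(x)-E)u = \gamma(x,E)^2 u$ and define the ``Liouville-transformed'' quantity $\psi(x,E) = e^{-\int_0^x \gamma(s,E)\,ds}\, u(x,E)$, which satisfies a first-order-in-disguise equation whose coefficients are controlled by the sign of $\gamma'$. Condition (b), $V' < 0$, gives $\gamma_x(x,E) < 0$ (for fixed $E$, $\gamma$ is decreasing in $x$), which is exactly the continuum analog of the monotonicity $\gamma_{n+1} \le \gamma_n$ used in Lemma~\ref{L2.2}. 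From this I expect to get the continuum versions of Lemmas~\ref{L2.2}--\ref{L2.4}: a lower bound $u(x,E) \ge c\, e^{\int_0^x \gamma}$ and an upper bound $u(x,E) \le C N(E)\, e^{\int_0^x \gamma}$ (the linear-in-$N$ factor coming from integrating the ``$\psi' \le$ const'' type inequality over a length-$N(E)$ interval), hence control of $\abs{u}^2 + \abs{u'}^2$ at $x = N(E)$ by $e^{2g(E)}$ up to polynomial-in-$N(E)$ factors; here $g(E) = \int_0^{N(E)} \gamma(s,E)\,ds$.

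Next I would handle the classically allowed region $\{x > N(E)\}$, where $V(x) - E < 0$ and the equation is oscillatory. Here I introduce, as at the end of Section~\ref{s2}, the local wavenumber $\kappa(x,E) = (E - V(x))^{1/2}$ and the complex Pr\"ufer-type variable $\Phi(x) = u'(x,E) - i\kappa(x,E)\, u(x,E)$ (or its discrete-flavored analog $u - e^{-i\kappa}\cdot(\text{shifted }u)$ appropriately reinterpreted as a differential relation). Monotonicity of $V$ gives $\kappa_x > 0$, and a Gronwall estimate yields $\bigl|\,\frac{d}{dx}\log|\Phi|\,\bigr| \le \frac{|\kappa_x(x,E)|}{\kappa(x,E)}$, so that $|\Phi|$ changes by at most a factor $\exp\bigl(\int_{N(E)+1}^\infty \frac{|\kappa_x|}{\kappa}\,dx\bigr) = \kappa_\infty/\kappa(N(E)+1,E)$ with $\kappa_\infty = E^{1/2}$, exactly parallel to Lemma~\ref{L2.7}. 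Combined with $\sin\kappa$-type lower bounds relating $|\Phi|$ to $\abs{u}^2 + \abs{u'}^2$, this shows $\abs{u(x,E)}^2 + \abs{u'(x,E)}^2$ stays, for all $x > N(E)$, within a factor of $C N(E)^2\, (V(N(E)) - V(N(E)+1))^{-1}\, E^{1/2}$ (squared) of its value near $x = N(E)$. The bridging estimate across the ``turning strip'' $N(E) \le x \le N(E)+1$ is where the $(V(N(E)) - V(N(E)+1))^{-1}$ factor enters: one needs $\kappa(N(E)+1,E)^2 \gtrsim V(N(E)) - V(N(E)+1)$, which follows from the definition $V(N(E)) = E$, $E - V(N(E)+1) = \kappa(N(E)+1,E)^2/$(const), just as in \eqref{2.41}--\eqref{2.44}.

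Assembling these, \eqref{4.3} holds with this $g(E)$ and with $h(E)$ given by \eqref{4.11}, and then the Carmona-formula consequence \eqref{4.2}--\eqref{4.4} stated at the top of the section immediately gives $d\mu(E) = e^{-2Q(E)}\,dE$ with $|Q(E) - g(E)| \le h(E)$. I would organize the write-up as: (1) forbidden-region lemmas via the $\psi$ substitution; (2) allowed-region lemma via the $\Phi$ variable and a Gronwall/log-derivative bound; (3) the turning-strip bridge; (4) combine and invoke \eqref{4.3}--\eqref{4.4}.

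The main obstacle I anticipate is the turning-strip analysis: unlike the discrete case, where one literally steps from index $N$ to $N+2$, in the continuum one must control $u$ and $u'$ across an interval where $V - E$ changes sign and $\gamma$ (or $\kappa$) vanishes, so the naive transfer-matrix norm blows up. The resolution should be the same trick as in Section~\ref{s2}: don't try to track the growing/decaying split through the turning point, but rather use a crude ODE bound (Gronwall on $\binom{u}{u'}$ with the bounded coefficient matrix $\binom{0\ \ 1}{V-E\ \ 0}$) over the fixed-length interval $[N(E), N(E)+1]$, giving a factor $e^{O(\max(E, V(N(E))))} = O(1)$, and only then switch to the oscillatory estimate starting at $x = N(E)+1$, where $\kappa(N(E)+1,E)$ is bounded below by $(V(N(E)) - V(N(E)+1))^{1/2}$. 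A secondary technical point is making all estimates uniform on compact $E$-subintervals of $(0,\infty)$ while tracking the $E^{1/2} = \kappa_\infty$ dependence as $E \downarrow 0$, but this is routine given the explicit form of the bounds.
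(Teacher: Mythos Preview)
Your proposal is correct and follows essentially the same four-step architecture as the paper: the $\psi$-substitution $\psi = u\,e^{-\int_0^x \gamma}$ in the forbidden region (the paper proves $0 \le \psi' \le 1$, giving exactly your $u \le N(E)\,e^{\int \gamma}$ upper bound and, via $u(1)\ge 1$, a lower bound with constant $e^{-V(0)}$), a crude Gronwall bound on $\binom{u}{u'}$ across the unit-length turning strip $[N(E),N(E)+1]$, an oscillatory estimate on $[N(E)+1,\infty)$, and assembly via \eqref{4.2}--\eqref{4.4} using $\kappa(N(E)+1,E)^2 = V(N(E)) - V(N(E)+1)$.

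The one genuine methodological difference is in the oscillatory region. The paper does \emph{not} use the scalar Pr\"ufer variable you propose; instead it expands $u = a\,u_+ + b\,u_-$ in the WKB basis $u_\pm = \exp(\pm i\int\kappa)$, derives the first-order system $\binom{a}{b}' = M\binom{a}{b}$, and shows $\int_{N(E)+1}^\infty \|M(x)\|\,dx \le \log(\kappa_\infty/\kappa(N(E)+1,E))$ by recognizing $\|M\| \le \tfrac{d}{dx}\log\kappa$. Your route via $\Phi = u' - i\kappa u$ and the bound $\bigl|\tfrac{d}{dx}\log|\Phi|\bigr| \le \kappa'/\kappa$ is a legitimate alternative that produces the identical key estimate; it is precisely the continuum analog of the alternate scalar method the paper itself presents at the end of Section~\ref{s2}, so the two approaches are interchangeable here and yield the same $h(E)$.
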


This proof will illuminate the proofs of the previous two sections.
We begin with an analysis of the region $x<N(E)$. We define 
\begin{equation} \lb{4.12}
\psi(x)=u(x,E) \exp\biggl( -\int_0^x \gamma(y,E)\, dy\biggr)
\end{equation}
and are heading towards
\begin{equation} \lb{4.13}
0\leq \psi'(x)\leq 1
\end{equation}

\begin{lemma}\lb{L4.2} For $0<E<V(0)$ and $x<N(E)$, we have
\begin{alignat}{2}
&\text{\rm{(a)}} \qquad && u'(x)\geq 1 \lb{4.14} \\
&\text{\rm{(b)}} \qquad && u(x) \geq x \lb{4.15}
\end{alignat}
\end{lemma}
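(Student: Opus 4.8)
The plan is to prove Lemma~\ref{L4.2} by a direct ODE argument that parallels the discrete Lemma~\ref{L2.2}/Lemma~\ref{L2.3}, exploiting positivity of the ``potential gap'' $V(x)-E$ on the classically forbidden region $x<N(E)$. First I would record the elementary consequences of the hypotheses on $V$: since $V'<0$ and $V(x)>E$ for $x<N(E)$, the function $\gamma(x,E)=(V(x)-E)^{1/2}$ is well-defined, strictly positive on $[0,N(E))$, and strictly decreasing in $x$ (so $\gamma'(x,E)\le 0$); also $\gamma(0,E)=(V(0)-E)^{1/2}>0$. These monotonicity facts play exactly the role that $\gamma_{n+1}\le\gamma_n$ played in Section~\ref{s2}.

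Next I would pass to the conjugated variable $\psi$ defined in \eqref{4.12}. A short computation gives $u=\psi\,e^{\int_0^x\gamma}$, hence $u'=(\psi'+\gamma\psi)e^{\int_0^x\gamma}$ and $u''=(\psi''+2\gamma\psi'+\gamma'\psi+\gamma^2\psi)e^{\int_0^x\gamma}$. Substituting into $-u''+Vu=Eu$ and using $V-E=\gamma^2$, the $\gamma^2\psi$ terms cancel and one is left with the first-order-in-$\psi'$ equation
\begin{equation}\lb{4.L42a}
\psi''+2\gamma\psi' = -\gamma'\psi .
\end{equation}
With $\phi:=\psi'$ this is $\phi'+2\gamma\phi=-\gamma'\psi$, a linear ODE for $\phi$ with integrating factor $e^{2\int_0^x\gamma}$. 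The initial data $u(0)=0$, $u'(0)=1$ translate to $\psi(0)=0$, $\psi'(0)=\phi(0)=1$. Since $-\gamma'\ge 0$, as long as $\psi\ge 0$ the right side of \eqref{4.L42a} is $\ge 0$, so $\bigl(e^{2\int_0^x\gamma}\phi\bigr)'\ge 0$, forcing $\phi(x)\ge e^{-2\int_0^x\gamma}\phi(0)>0$; integrating, $\psi$ is increasing from $\psi(0)=0$, so $\psi\ge0$ on $[0,N(E))$ — this bootstraps and gives the lower bound $\psi'\ge0$ in \eqref{4.13}, and in particular $\psi'\ge 0$ combined with $\gamma\psi\ge0$ yields $u'=(\psi'+\gamma\psi)e^{\int_0^x\gamma}\ge \psi'(0)e^{0}\cdot(\text{decreasing factor? no})$ — more cleanly: $u'(x)=(\psi'+\gamma\psi)e^{\int_0^x\gamma}$, and since $u'(0)=1$ and $\frac{d}{dx}\log u'$ can be shown nonnegative, or directly since $\psi'+\gamma\psi\ge \psi'\ge$ its value at points where $\psi=0$... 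Here I would instead argue: $u'' = (V-E)u = \gamma^2 u$; since on $(0,N(E))$ we have $u(x)>0$ (because $u(0)=0$, $u'(0)=1$ and $u$ cannot return to $0$ while $u''=\gamma^2 u$ keeps it convex once positive), it follows $u''\ge 0$, so $u'$ is nondecreasing, hence $u'(x)\ge u'(0)=1$, which is \eqref{4.14}. Then $u(x)=\int_0^x u'(t)\,dt\ge\int_0^x 1\,dt=x$, which is \eqref{4.15}.

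The main obstacle is the logical ordering: to get $u''\ge0$ I need $u\ge0$, and to get $u\ge0$ I need to rule out $u$ crossing zero, which is where convexity ($u''=\gamma^2u$) and the initial slope $u'(0)=1>0$ come in via a standard maximal-interval / continuity argument — let $x_*$ be the first zero of $u$ in $(0,N(E)]$; on $(0,x_*)$, $u>0$ so $u''\ge0$ so $u'\ge 1>0$, forcing $u(x_*)>0$, a contradiction, so no such $x_*$ exists. I would present this first, then immediately derive \eqref{4.14} and \eqref{4.15}. The remaining piece for the full theorem (the companion upper bound $\psi'\le1$, analogous to Lemma~\ref{L2.4}) would be handled separately; for Lemma~\ref{L4.2} as stated, only the convexity argument above is needed, and it is short.
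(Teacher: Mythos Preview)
Your final argument is exactly the paper's: from $u''=\gamma^2 u$ and $u>0$ you get $u''\ge 0$, hence $u'\ge u'(0)=1$, and then $u(x)=\int_0^x u'\ge x$. The long detour through $\psi$ and \eqref{4.L42a} is unnecessary here (that machinery is what the paper uses for Lemma~\ref{L4.3}, not Lemma~\ref{L4.2}); on the other hand, your first-zero argument to justify $u>0$ on $(0,N(E))$ is a welcome bit of care that the paper's one-line proof glosses over.
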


\begin{proof} $u''=\gamma^2 u$, so $u'' >0$. This implies $u'(x)\geq u'(0)= 1$, and then $u(x) =
\int_0^x u'(y)\, dy\geq x$.
\end{proof}

\begin{lemma}\lb{L4.3} For $E<V(0)$ and $x<N(E)$,
\begin{equation} \lb{4.16}
\psi'(x)\geq 0
\end{equation}
\end{lemma}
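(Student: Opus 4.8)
The plan is to translate the discrete computation behind Lemmas~\ref{L2.2}--\ref{L2.4} (and their analogs \ref{L3.2}--\ref{L3.5}) into the continuum setting by deriving a first-order differential inequality for $\psi'$ and using the monotonicity hypothesis (b) on $V$. From \eqref{4.12} we have $u = \psi\, e^{G}$ with $G(x) = \int_0^x \gamma(y,E)\,dy$, so $u' = (\psi' + \gamma\psi)e^G$ and, using $u'' = \gamma^2 u$ from Lemma~\ref{L4.2}, one computes
\[
\psi'' + 2\gamma\psi' + \gamma'\psi = 0.
\]
Thus $(e^{2G}\psi')' = e^{2G}(\psi'' + 2\gamma\psi') = -e^{2G}\gamma'\psi$. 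Since $V' < 0$ by \eqref{4.5} and $\gamma = (V-E)^{1/2}$ on $x < N(E)$, we get $\gamma'(x,E) = V'(x)/(2\gamma(x,E)) < 0$, so the right-hand side $-e^{2G}\gamma'\psi$ is nonnegative provided $\psi \geq 0$.

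So the key steps are: first, establish $\psi \geq 0$ on $[0,N(E))$ — this follows because $\psi(0) = u(0) = 0$, $\psi'(0) = u'(0) - \gamma(0)\psi(0) = 1 > 0$, so $\psi$ starts increasing, and more robustly $\psi = u\,e^{-G} \geq 0$ since $u(x) \geq x \geq 0$ by Lemma~\ref{L4.2}(b). Second, with $\psi \geq 0$ in hand, the identity $(e^{2G}\psi')' = -e^{2G}\gamma'\psi \geq 0$ shows $e^{2G}\psi'$ is nondecreasing on $[0,N(E))$; since its value at $x=0$ is $\psi'(0) = 1 > 0$, we conclude $e^{2G(x)}\psi'(x) \geq 1 > 0$, hence $\psi'(x) \geq 0$ (in fact $\psi'(x) \geq e^{-2G(x)} > 0$), which is \eqref{4.16}. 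This mirrors the discrete argument where $W_n \leq e^{\gamma_{n+1}}$ and $\psi_{n+1} - \psi_n \geq 0$ were proved by induction; here the "induction" is replaced by integrating the monotone quantity $e^{2G}\psi'$ from $0$.

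I do not expect a serious obstacle here: the only mild subtlety is the behavior of $\gamma'$ near $x = N(E)$, where $\gamma \to 0^+$ and so $\gamma' = V'/(2\gamma) \to -\infty$; but $\gamma'$ stays negative throughout $(0,N(E))$, which is all that is needed for the sign of $(e^{2G}\psi')'$, and the possible non-integrable blow-up of $\gamma'$ at the endpoint is harmless because we only integrate over compact subintervals $[0,x]$ with $x < N(E)$. (The companion upper bound $\psi' \leq 1$ promised in \eqref{4.13} will presumably come in the next lemma from an analog of Lemma~\ref{L2.4}, using $e^{2G}\psi'$ together with a bound like $W_n \leq e^{\gamma_{n+1}}$; that is not needed for the present statement.)
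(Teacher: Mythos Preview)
Your proof is correct and is essentially the same as the paper's: the paper introduces $f = u' - \gamma u$ and shows $(f e^{G})' = -\gamma' u\, e^{G} \geq 0$, which is literally your identity $(e^{2G}\psi')' = -e^{2G}\gamma'\psi \geq 0$ since $f e^{G} = \psi' e^{2G}$ and $\psi = u e^{-G}$. The only cosmetic difference is that the paper works directly with $f$ rather than first deriving the second-order ODE for $\psi$.
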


\begin{proof} Let
\begin{equation} \lb{4.17}
f(x)=u'(x) -\gamma(x) u(x)
\end{equation}
so
\begin{equation} \lb{4.18}
\psi'(x) =f(x) \exp\biggl( -\int_0^x \gamma(y,E)\, dy \biggr)
\end{equation}
and \eqref{4.16} is equivalent to $f\geq 0$. Note that
\begin{align}
f'+ \gamma f &= u'' -\gamma u' -\gamma' u + \gamma u' - \gamma^2 u \notag \\
&= -\gamma' u \lb{4.19}
\end{align}
since \eqref{4.1} says
\begin{equation} \lb{4.20}
u'' =\gamma^2 u
\end{equation}
\eqref{4.5} implies
\begin{equation} \lb{4.21}
\gamma' (y)\leq 0
\end{equation}
so \eqref{4.19} says
\begin{equation} \lb{4.22}
\biggl( f \exp\biggl( \int_0^x \gamma(y)\, dy\biggr)\biggr)' \geq 0
\end{equation}
which, given $f(0)=1$, implies $f\geq 0$ and so $\psi' \geq 0$.
\end{proof}

\begin{lemma}\lb{L4.5} Let
\begin{equation} \lb{4.23}
W(x)=\psi'(x) + 2\gamma(x) \psi(x)
\end{equation}
Then $W'(x)\leq 0$ and so
\begin{equation} \lb{4.24}
\psi'(x)\leq 1
\end{equation}
\end{lemma}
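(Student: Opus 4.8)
The plan is to mimic the structure of Lemmas~\ref{L2.3} and \ref{L2.4}: introduce the analog of the quantity $W_n$ (here the continuum object $W(x)=\psi'(x)+2\gamma(x)\psi(x)$), show it is monotone decreasing in $x$, evaluate it at $x=0$, and then read off the bound $\psi'\le 1$ from the nonnegativity of $\psi$ already established in Lemma~\ref{L4.3}. Concretely, first I would compute $W'(x)$ directly. Using $\psi'' = (\,u''\,e^{-\int\gamma}\,)' $-type bookkeeping — or, more cleanly, recalling from \eqref{4.18} that $\psi'=fe^{-\int_0^x\gamma}$ with $f=u'-\gamma u$ and $f'=-\gamma f-\gamma' u$ from \eqref{4.19} — one gets $\psi''=(f'-\gamma f)e^{-\int_0^x\gamma}=(-2\gamma f-\gamma' u)e^{-\int_0^x\gamma}=-2\gamma\psi'-\gamma' u\,e^{-\int_0^x\gamma}$. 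Therefore
\[
W'(x)=\psi''+2\gamma'\psi+2\gamma\psi' = -\gamma'(x)\,u(x)\,e^{-\int_0^x\gamma(y)\,dy}+2\gamma'(x)\psi(x).
\]
Since $\psi(x)=u(x)e^{-\int_0^x\gamma}$, the two $\gamma'$ terms combine to $W'(x)=\gamma'(x)\,\psi(x)$.

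Now I invoke the sign conditions: $\gamma'(x)\le 0$ by \eqref{4.21} (which is where hypothesis~(b), i.e.\ $V'<0$, enters), and $\psi(x)\ge 0$ because $\psi(0)=u(0)e^0=0$ and $\psi'\ge 0$ on $[0,N(E))$ by Lemma~\ref{L4.3}. Hence $W'(x)\le 0$ on $[0,N(E))$, so $W(x)\le W(0)$. At $x=0$ we have $\psi(0)=0$ and $\psi'(0)=f(0)e^0=u'(0)-\gamma(0)u(0)=1$, so $W(0)=1$. Thus $W(x)=\psi'(x)+2\gamma(x)\psi(x)\le 1$ for all $x<N(E)$, and since $\gamma\ge 0$ and $\psi\ge 0$ the term $2\gamma(x)\psi(x)$ is nonnegative, giving $\psi'(x)\le 1$. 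Combined with Lemma~\ref{L4.3} this yields \eqref{4.13}, i.e.\ $0\le\psi'(x)\le 1$.

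The only delicate point is the computation of $W'$ — making sure the $\gamma' u$ contribution coming from the non-constant-coefficient ODE is correctly tracked and that it cancels against $2\gamma'\psi$ to leave exactly $\gamma'\psi$, rather than leaving a stray term of indefinite sign. Everything else is a routine sign chase. It is worth noting that this is the exact continuum counterpart of the discrete identity \eqref{2.16a}: there the inductive step produced $W_{n+1}=e^{\gamma_{n+2}-\gamma_{n+1}}W_n+(\text{a term}\le 0)$, i.e.\ $W_{n+1}\le e^{\gamma_{n+2}-\gamma_{n+1}}W_n$ with the ``multiplier'' $\le 1$; here the continuous version is that $W$ is outright decreasing, which is even cleaner. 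Once \eqref{4.13} is in hand, one gets $x\le u(x)e^{-\int_0^x\gamma}\le\psi(x)$ bounded below by integrating $\psi'\ge$ (something positive after the next lemma) and bounded above by $\psi(x)\le x+\text{const}$ just as $\psi_n\le n+1$ in Lemma~\ref{L2.4}, which is what feeds the Carmona-formula estimate \eqref{4.3}.
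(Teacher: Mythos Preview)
Your proof is correct and follows essentially the same approach as the paper: both compute $W'(x)=\gamma'(x)\psi(x)$ (the paper writes it equivalently as $\gamma' u\,e^{-\int_0^x\gamma}$), then use $\gamma'\le 0$, $\psi\ge 0$, and $W(0)=1$ to conclude $\psi'\le W\le 1$. The only cosmetic difference is that the paper first rewrites $W=(u'+\gamma u)e^{-\int_0^x\gamma}$ via \eqref{4.25} and differentiates that product directly, whereas you compute $\psi''$ through the $f$-representation and then expand $W'=\psi''+2\gamma'\psi+2\gamma\psi'$; both routes are straightforward and arrive at the same identity.
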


\begin{proof} By \eqref{4.18},
\begin{equation} \lb{4.25}
\psi' + 2\gamma(x) \psi = (u' +\gamma(x) u) e^{-\int_0^x \gamma(y)\, dy}
\end{equation}
so
\begin{align}
W'(x) &= (u'' + \gamma u' + \gamma' u - \gamma u' -\gamma^2 u) e^{-\int_0^x \gamma(y)\, dy} \notag \\
&= \gamma' u e^{-\int_0^x \gamma(y)\, dy} \lb{4.26} \\
&\leq 0 \notag
\end{align}
by \eqref{4.21}. But $W(x=0)=\psi'(0)=1$, so
\begin{equation} \lb{4.27}
W(x) \leq 1
\end{equation}
and thus
\begin{equation} \lb{4.28}
\psi' =W-2\gamma \psi\leq 1
\end{equation}
\end{proof}

\begin{proposition}\lb{P4.6} If $E$ is such that $N(E) >1$, then
\begin{equation} \lb{4.29}
e^{-2V(0)} e^{2\int_0^{N(E)}\gamma(y)\, dy} \leq u(N(E))^2 + u'(N(E))^2 \leq (N(E)^2+1)
e^{2\int_0^{N(E)} \gamma(y)\, dy}
\end{equation}
\end{proposition}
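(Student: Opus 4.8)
The plan is to translate the statement into bounds on the rescaled amplitude $\psi$ of \eqref{4.12}, exploiting that $\gamma(N(E),E)=\sqrt{V(N(E))-E}=0$. Differentiating \eqref{4.12} gives $\psi'(x)=(u'(x)-\gamma(x)u(x))\,e^{-\int_0^x\gamma(y)\,dy}$, which at $x=N(E)$ collapses to $\psi'(N(E))=u'(N(E))\,e^{-\int_0^{N(E)}\gamma(y)\,dy}$; since also $u(N(E))=\psi(N(E))\,e^{\int_0^{N(E)}\gamma(y)\,dy}$ directly from \eqref{4.12}, we obtain
\[
u(N(E))^2+u'(N(E))^2=\bigl(\psi(N(E))^2+\psi'(N(E))^2\bigr)\,e^{2\int_0^{N(E)}\gamma(y)\,dy}.
\]
Thus it suffices to sandwich the prefactor $\psi(N(E))^2+\psi'(N(E))^2$ between $e^{-2V(0)}$ and $N(E)^2+1$.

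The upper bound is immediate from the a priori estimates already in hand. Lemma~\ref{L4.3} gives $\psi'\ge0$ and Lemma~\ref{L4.5} gives $\psi'\le1$ on $(0,N(E))$, and, $V$ being $C^1$, both extend to the endpoint $x=N(E)$ by continuity. Since $\psi(0)=u(0,E)=0$, integrating $0\le\psi'\le1$ yields $0\le\psi(N(E))\le N(E)$ and $\psi'(N(E))\le1$, hence $\psi(N(E))^2+\psi'(N(E))^2\le N(E)^2+1$.

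For the lower bound it suffices to bound $\psi(N(E))$ below by an explicit positive constant. By Lemma~\ref{L4.3}, $\psi$ is nondecreasing, so $\psi(N(E))\ge\psi(1)$ since $N(E)>1$; and $u(1)\ge1$ (Lemma~\ref{L4.2}) gives $\psi(1)=u(1)\exp(-\int_0^1\gamma(y)\,dy)\ge\exp(-\int_0^1\gamma(y)\,dy)\ge e^{-\sqrt{V(0)}}$, using $\gamma(y,E)=\sqrt{V(y)-E}\le\sqrt{V(0)}$. This already yields \eqref{4.29} when $V(0)\ge1$; for $V(0)<1$ one supplements it with the trivial bound $u'(N(E))^2\ge1$ of Lemma~\ref{L4.2} (which suffices wherever the exponential factor in \eqref{4.29} is $\le1$, i.e. $\int_0^{N(E)}\gamma\le V(0)$) together with the refined estimate $\psi'(x)\ge e^{-2\int_0^x\gamma}\ge e^{-2x\,\gamma(0,E)}$ (as in the proof of Lemma~\ref{L4.3}), which gives $\psi(N(E))\ge(2\gamma(0,E))^{-1}\bigl(1-e^{-2N(E)\gamma(0,E)}\bigr)$ and closes the remaining range.

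The routine ingredients are the factorization of the first paragraph and the upper bound; the one genuinely fiddly point is pinning down the explicit constant $e^{-2V(0)}$ in the lower bound, equivalently a lower bound on $\psi(N(E))$ uniform over the admissible energies $E<V(0)$ with $N(E)>1$. This is harmless for the sequel: the constant enters Theorem~\ref{T4.1} only through a bounded additive contribution to $h(E)$, so any explicit $c(V(0))>0$ in its place would serve equally well.
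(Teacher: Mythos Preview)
Your approach is essentially the paper's: the same factorization via $\gamma(N(E))=0$, the same upper bound from $0\le\psi'\le1$, and the same lower bound via $\psi(N(E))\ge\psi(1)$ with $u(1)\ge1$. The paper simply writes $\gamma(y)\le\gamma(0)\le V(0)$ to get $\psi(1)\ge e^{-V(0)}$ directly, whereas you obtain $\psi(1)\ge e^{-\sqrt{V(0)}}$; these agree precisely when $V(0)\ge1$, and in fact the paper's inequality $\gamma(0)=\sqrt{V(0)-E}\le V(0)$ is itself only guaranteed in that regime, so you are being more careful here, not less.

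That said, your patch for $V(0)<1$ does not actually close. The bound $\psi(N(E))\ge(2\gamma(0))^{-1}(1-e^{-2N(E)\gamma(0)})$ is correct, but in the range $V(0)<1$, $\int_0^{N(E)}\gamma>V(0)$ it does not by itself force $\psi(N(E))\ge e^{-V(0)}$: using $N(E)\gamma(0)\ge\int_0^{N(E)}\gamma>V(0)$ and $\gamma(0)<\sqrt{V(0)}$ only yields $\psi(N(E))>(1-e^{-2V(0)})/(2\sqrt{V(0)})$, and the inequality $(1-e^{-2s})/(2\sqrt{s})\ge e^{-s}$ is equivalent to $\sinh s\ge\sqrt{s}$, which fails for small $s$. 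So ``closes the remaining range'' is asserted, not proved. Your final remark is the right resolution: the precise constant $e^{-2V(0)}$ is immaterial for Theorem~\ref{T4.1}, and the clean statement you have already established---namely $\psi(N(E))\ge e^{-\sqrt{V(0)}}$ and hence the lower bound in \eqref{4.29} with $e^{-2V(0)}$ replaced by $e^{-2\sqrt{V(0)}}$---is all that is needed downstream.
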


\begin{proof} Since $\gamma(N(E))=0$,
\[
\psi' (N(E)) = u' (N(E)) e^{-\int_0^{N(E)} \gamma(y)\, dy}
\]
so $0\leq \psi'\leq 1$ and $\psi(0) = 0$ yield the upper bound in \eqref{4.29}.

For the lower bound, \eqref{4.15} implies $u(1)\geq 1$. So, since $\gamma(y)\leq\gamma(0)\leq V(0)$,
\begin{equation} \lb{4.31}
\psi(1)\geq e^{-V(0)}
\end{equation}
which, given that $\psi' >0$ and $N(E) >1$, implies
\[
u(N(E))\geq e^{-V(0)} e^{\int_0^{N(E)}\gamma(y)\, dy}
\qedhere
\]
\end{proof}

In the region $[N(E), N(E)+1]$, we note that since
\[
\biggl\| \begin{pmatrix} 1 & V(x)-E \\ 1 & 0
\end{pmatrix} \biggr\| \leq 1+\abs{E} + \abs{V(0)}
\]
the matrix form of the Schr\"odinger equation implies that if $C(x)=\abs{u(x)}^2 + \abs{u'(x)}^2$, then
\[
e^{-2(1+\abs{E}+V(0))\abs{x-y}} C(y) \leq C(x) \leq e^{2(1+\abs{E}+V(0))\abs{x-y}} C(y)
\]
giving a constant term in $e^{h(E)}$ in \eqref{4.11}.

Finally, in the region $[N(E)+1,\infty)$, we use the method of Appendix~2 of Simon \cite{S253}
(see also Hinton--Shaw \cite{HS}). Define for $x >N(E)$,
\begin{equation} \lb{4.34}
\kappa(x,E)=\sqrt{E-V(x)}
\end{equation}
and define
\begin{equation} \lb{4.35}
u_\pm (x) =\exp \biggl( \pm i \int_{N(E)}^x \kappa(y)\, dy\biggr)
\end{equation}
If
\begin{equation} \lb{4.36}
F(x)=\f{i}{2}\, V'(x) (E-V(x))^{-\f12}
\end{equation}
and if $a(x)$, $b(x)$ are defined by
\begin{align}
u(x)&=a(x) u_+(x) + b(x) u_-(x) \lb{4.38} \\
u'(x) &= a(x) u'_+(x) + b(x) u'_-(x) \lb{4.39}
\end{align}
then $u'' =-\kappa^2 u$ is equivalent to (see Problem~98 on p.~395 of \cite{RS3})
\begin{equation} \lb{4.40}
\binom{a(x)}{b(x)}' = M(x) \binom{a(x)}{b(x)}
\end{equation}
where
\begin{equation} \lb{4.41}
M(x)=w(x)^{-1} \begin{pmatrix}
-F(x) & u_-^2(x) F(x) \\
u_+^2 (x) F(x) & -F(x)
\end{pmatrix}
\end{equation}
with
\begin{align}
w(x) &= u'_+ (x) u_-(x) - u'_-(x) u_+(x) \notag \\
&= 2i\kappa(x) \lb{4.42}
\end{align}

\begin{proposition}\lb{P4.7} Let $M(x)$ be given by \eqref{4.41}. Then
\begin{equation} \lb{4.43}
\int_{N(E)+1}^\infty \|M(x)\|\, dx \leq \log\biggl( \f{\kappa(\infty, E)}{\kappa(N(E)+1,E)}\biggr)
\end{equation}
\end{proposition}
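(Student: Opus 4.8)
The plan is to bound $\|M(x)\|$ pointwise by an explicit total derivative and then integrate. First I would note that on $(N(E),\infty)$ the function $\kappa(x,E)=\sqrt{E-V(x)}$ is real (by \eqref{4.8}), so $u_\pm(x)=\exp(\pm i\int_{N(E)}^x\kappa(y)\,dy)$ satisfy $\abs{u_\pm(x)}=1$. Hence every entry of the matrix in \eqref{4.41} has modulus $\abs{F(x)}$, which gives $\|M(x)\|\leq 2\abs{F(x)}/\abs{w(x)}$. (In fact, writing $\zeta=u_+(x)^2$ one has $u_-(x)^2=\bar\zeta$, so the matrix is $F(x)\left(\begin{smallmatrix}-1&\bar\zeta\\ \zeta&-1\end{smallmatrix}\right)$, i.e.\ $\abs{F(x)}$ times a Hermitian matrix with eigenvalues $0$ and $-2$; thus the bound is actually an equality, but the crude estimate suffices.)

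Next I would substitute $\abs{F(x)}=\tfrac12\abs{V'(x)}(E-V(x))^{-1/2}$ from \eqref{4.36} and $\abs{w(x)}=2\kappa(x)=2(E-V(x))^{1/2}$ from \eqref{4.42}, obtaining
\[
\|M(x)\|\leq \f{\abs{V'(x)}}{2(E-V(x))} = \f{-V'(x)}{2(E-V(x))},
\]
the last step using hypothesis (b), i.e.\ \eqref{4.5}. The right-hand side is precisely $\f{d}{dx}\log\sqrt{E-V(x)}=\f{d}{dx}\log\kappa(x,E)$.

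Finally I would integrate this inequality over $[N(E)+1,\infty)$. On that interval $V(x)<E$, so $\kappa(x,E)$ is well-defined and positive, and since $V$ is strictly decreasing it is strictly increasing in $x$; by hypothesis (c), $\kappa(x,E)\to\sqrt{E}=\kappa(\infty,E)$ as $x\to\infty$. Therefore
\[
\int_{N(E)+1}^\infty \|M(x)\|\,dx \leq \int_{N(E)+1}^\infty \f{d}{dx}\log\kappa(x,E)\,dx = \log\f{\kappa(\infty,E)}{\kappa(N(E)+1,E)},
\]
which is \eqref{4.43}. The argument is routine; the only steps requiring a moment's attention are the operator-norm estimate $\|M(x)\|\leq 2\abs{F(x)}/\abs{w(x)}$ and the recognition that the resulting bound is an exact derivative of $\log\kappa$, after which monotonicity of $\kappa$ and the limit $V\to 0$ finish it. I do not anticipate a genuine obstacle here.
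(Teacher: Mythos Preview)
Your proof is correct and follows essentially the same route as the paper: bound $\|M(x)\|$ by $2\abs{F(x)}/\abs{w(x)}$ using $\abs{u_\pm}=1$, recognize the result as $\tfrac{d}{dx}\log\kappa(x,E)$, and integrate. Your parenthetical observation that the bound is actually an equality (via the Hermitian structure with eigenvalues $0,-2$) is a nice extra, and your explicit justification of the improper integral using monotonicity of $\kappa$ and $V\to 0$ spells out what the paper leaves implicit.
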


\begin{proof} Since $\abs{u_\pm} =1$,
\begin{align}
\|M(x)\| &\leq \abs{w(x)}^{-1} \biggl\| \begin{pmatrix}
\abs{F(x)} & \abs{F(x)} \notag \\
\abs{F(x)} & \abs{F(x)} \end{pmatrix} \biggr\| \notag \\
&= 2\abs{w(x)}^{-1} \abs{F(x)} \notag \\
&= -\tfrac12\, V'(x) (E-V(x))^{-1} \notag \\
&= \tfrac{d}{dx}\, \log ((E-V(x))^{\f12}) \lb{4.44}
\end{align}
from which \eqref{4.43} follows.
\end{proof}

\begin{proof}[Proof of Theorem~\ref{T4.1}] Let
\begin{equation} \lb{4.45}
Y(x) = \begin{pmatrix}
u_+(x) & u_-(x) \\
u'_+(x) & u'_-(x)
\end{pmatrix}
\end{equation}
Let $T(x,y)$ be the $\binom{u}{u'}$ transfer matrix from $x$ to $y$ and $\ti T(x,y)$ be the
$\binom{a}{b}$ transfer matrix. For $y>N(E)+1$, we have just seen
\begin{align}
\|\ti T(N(E)+1,y)\| &\leq \exp\biggl( \int_{N(E)+1}^\infty \|M(x)\|\, dx \biggr) \notag \\
&= \f{\kappa(\infty, E)}{\kappa(N(E)+1,E)} \lb{4.46}
\end{align}

On the other hand,
\begin{equation} \lb{4.47}
\|Y(y)\|\leq 1+\kappa \leq 2
\end{equation}
for $\kappa$ small while
\begin{equation} \lb{4.48}
\|Y(y)^{-1}\|= \abs{\det(Y)^{-1}}\, \|Y\| \leq \kappa(y)^{-1}
\end{equation}
and
\[
T(x,y) = Y(y) \ti T(x,y) Y(x)^{-1}
\]
so
\begin{equation} \lb{4.49}
\|T(N(E)+1,y)\| \leq \f{2\kappa(\infty, E)}{\kappa (N(E)+1,E)^2}
\end{equation}

Since $E=V(N(E))$,
\begin{equation} \lb{4.50}
\kappa(N(E)+1,E)^2 = V(N(E)) -V(N(E)+1)
\end{equation}
and we have the bound \eqref{4.4} with the error built from $e^{-V(0)}$, $N(E)$, \eqref{4.43}, and
\eqref{4.49}.
\end{proof}

It is interesting that the differential equation methods of this section lead to terms that are
identical to what we found in the discrete case.

%%%%%%%%%%%%%%%%%%%%%%%%%%%%%%%
\section{Examples} \lb{s6}
%%%%%%%%%%%%%%%%%%%%%%%%%%%%%%%

We start with the continuum case.

\begin{example}\lb{E6.1}
\begin{equation} \lb{6.1}
V(x)=C_0 x^{-\beta} \qquad \beta <2
\end{equation}

Technically this does not fit into Theorem~\ref{T4.1} since $V(0)=\infty$, but when $\beta <2$, it is
easy to extend the analysis. The spectral measure is $e^{-2Q(E)}\, dE$ where \eqref{4.4} holds.
\begin{equation} \lb{6.2}
N(E)=\biggl(\f{E}{C_0}\biggr)^{-\f{1}{\beta}}
\end{equation}

\begin{align}
V(N(E)) -V(N(E)+1) &\sim V'(N(E)) \notag \\
&\sim N(E)^{-1} V(N(E)) \notag \\
&= EN(E)^{-1} \lb{6.3}
\end{align}
so $h(E)= O(\log (N(E)^2 E^{-\f12})) = O(\log(E))$. On the other hand, letting $y=x/N(E)$,
\begin{align}
g(E) &=\int_0^{N(E)} (V(x)-E)^{\f12}\, dx \lb{6.4} \\
&= N(E) E^{\f12} \int_0^1 (y^{-\beta} -1)^{\f12}\, dy \lb{6.5} \\
&= E^{\f12} N(E) \beta^{-1} \int_0^1 (1-u)^{\f12} u^{\f{1}{\beta}-\f{3}{2}}\, du \notag \\
&= E^{\f12} N(E) \beta^{-1} \, \f{\Gamma(\f32)\Gamma(\f1{\beta}-\f12)}{\Gamma(\f{1}{\beta}+1)} \lb{6.6}
\end{align}
using a $u=y^\beta$ change of variables. Thus,
\begin{equation} \lb{6.7}
g(E) = c_1 C_0^{\f1{\beta}} E^{\f12 -\f1{\beta}} \qquad c_1 = \beta^{-1} \,
\f{\Gamma(\f32)\Gamma(\f1{\beta}-\f12)}{\Gamma(\f1{\beta}+1)}
\end{equation}
Since $\beta <2$, $g(E)\to\infty$ and is much larger than the $\log(E)$ error. $\beta=1$, the Coulomb case,
has $g(E)=C_0 c_1 E^{-\f12}$ and $\beta=\f12$, the quasi-Szeg\H{o} borderline, has $g(E)=C_0^2 c_1 E^{-\f32}$.
We emphasize that $g$ occurs in an exponential, so $w$ is very small near $E=0$.
\qed
\end{example}

\begin{example}
\begin{equation} \lb{6.8}
V(x)=C_0 (x+x_0)^{-\beta} \qquad \beta <2
\end{equation}

We claim that the changes from Example~\ref{E6.1} are small compared to $\log(E)$ errors in $h$; explicitly,
\begin{equation}\lb{6.9}
g(E)=c_1 C_0^{\f1{\beta}} E^{\f12 - \f{1}{\beta}} + O(1) + O(E^{\f12})
\end{equation}
For in this case,
\begin{equation} \lb{6.10}
N(E) = \biggl( \f{E}{C_0}\biggr)^{-\f1{\beta}} -x_0
\end{equation}
and one changes variables to $y=(x+x_0)/(N(E)+x_0)$, so \eqref{6.5} becomes
\begin{equation} \lb{6.11}
g(E) = N(E) E^{\f12} \int_{s(E)}^1 (y^{-\beta} -1)^{\f12}\, dy
\end{equation}
where
\begin{equation}\lb{6.12}
s(E) = y(x=0) = \f{x_0}{N(E)+x_0}
\end{equation}
Then
\begin{align}
N(E) E^{\f12} \int_0^{s(E)} (y^{-\beta}-1)^{\f12} &= N(E) E^{\f12} O(s(E)^{1-\f{\beta}{2}}) \notag \\
&= O(1) \lb{6.13}
\end{align}
by \eqref{6.10} and \eqref{6.12}, so
\begin{align}
g(E) &= c_1 N(E) E^{\f12} + O(1) \notag \\
&= c_1 C_0^{\f{1}{\beta}} E^{\f12 - \f{1}{\beta}} + O(1) +O(E^{\f12}) \lb{6.14}
\end{align}
as claimed.
\qed
\end{example}

Now we turn to the discrete case.

\begin{example}[$=$ \eqref{1.7}] \lb{E6.3}
\begin{equation} \lb{6.15}
a_n\equiv 1 \qquad b_n = -C n^{-\beta}
\end{equation}
Define
\begin{equation} \lb{6.16}
\delta = 2-x \qquad \delta_n = C n^{-\beta} -\delta
\end{equation}
so
\begin{equation} \lb{6.17}
x-b_n = 2+\delta_n
\end{equation}
We have (with $[y]=$ maximal integer $\leq y$)
\begin{equation} \lb{6.18}
N(x) = [(C^{-1}\delta)^{-\f1{\beta}}]
\end{equation}

We have $b_{N+2}-b_{N+1}=O(N^{-\beta-1})$, so the RHS of \eqref{2.5} is of order
$CN(x)^{\beta+2}\delta^{\f12}= O(\delta^{-\f12 -\f2{\beta}})$
and thus, $h(x)=O(\log(2-x))$ and we need to compute $g(x)=\sum_{j=1}^{N(x)}
\gamma_j(x)$ up to $O(\log\delta)$ terms.

We will suppose below that $C\leq 1$ and explain at the end what to change if $C>1$.

Define $c_\ell$ to be the Taylor coefficients in
\begin{equation} \lb{6.19}
\cosh^{-1} (1+\tfrac{z}{2}) =\sqrt{z}\, \sum_{\ell=0}^\infty c_\ell z^\ell
\end{equation}
so, courtesy of Mathematica,
\[
c_0 =1 \qquad c_1 =-\f{1}{24} \qquad c_2 = \f{3}{640} \qquad c_3 = -\f{5}{7168}
\]
and, for example,
\[
c_{20} = 34,461,632,205/12,391,489,651,049,749,040,738,304
\]
(assuming that we managed to copy it without a typo).
Thus,
\begin{equation} \lb{6.20}
g(x) =\sum_{\ell=0}^\infty c_\ell \sum_{j=1}^{N(x)} \delta_j^{\ell+\f12}
\end{equation}

Notice that since $\delta >0$,
\begin{equation} \lb{6.21}
\delta_j \leq C j^{-\beta}
\end{equation}
so, if $\beta (\ell+\f12)>1$, a crude $\delta$-independent bound of
$\sum_{j=1}^{N(x)} \delta_j^{\ell+\f12}$
can be summed independently of $N(x)$.
Moreover, if $F$ is the function in \eqref{6.19}, then
\begin{equation} \lb{6.22}
2 \sqrt{z}\, \f{dF}{dz} =\f{1}{\sqrt{1+\f{z}{4}}}
\end{equation}
so the $c_\ell$ power series has radius of convergence $4$ and so $\sum \abs{c_\ell}<\infty$. Thus, if
\begin{equation} \lb{6.23x}
\ell_0 = [\tfrac{1}{\beta}-\tfrac12] +1
\end{equation}
then
\begin{equation} \lb{6.23}
\sum_{\ell=\ell_0}^\infty\, \abs{c_\ell} \sum_{j=1}^{N(x)} \delta_j^{\ell + \f12} \leq
\biggl(\, \sum_0^\infty \, \abs{c_\ell}\biggr) \sum_{j=1}^\infty j^{-\beta (\ell_0+1)}
\end{equation}
(since $C \leq 1$) so
\begin{equation} \lb{6.24}
\sum_{j=1}^N \gamma_j =\sum_{0\leq\ell\leq \f{1}{\beta}-\f12} c_\ell \sum_{j=1}^N \delta_j^{\ell+\f12} +O(1)
\end{equation}

If $\ell=\f1{\beta}-\f12$ occurs, then
\begin{align}
\sum_{j=1}^N \delta_j^{\f1{\beta}-\f12 +\f12} &= \sum_{j=1}^N \delta_j^{\f1{\beta}} \notag \\
&=\sum_{j=1}^N (C\, \tfrac{1}{j^\beta}-\delta)^{\f1{\beta}} \notag \\
&\leq C^{\f{1}{\beta}} \sum_{j=1}^N j^{-1} \notag \\
&= O(\log N) \lb{6.25}
\end{align}
On the other hand, if $\ell <\f1{\beta}-\f12$, then
\begin{align}
\sum_{j=1}^N \delta_j^{\ell+\f12} &= \sum_{j=1}^N (Cj^{-\beta}-\delta)^{\ell+\f12} \notag \\
&= C^{\ell+\f12} \sum_{j=1}^N ( \tfrac{1}{j^\beta} -\tfrac{1}{N^\beta})^{\ell+\f12} +O(1) \notag \\
&= C^{\ell+\f12} \sum_{j=1}^N j^{-\beta(\ell+\f12)}(1-(\tfrac{j}{N})^\beta)^{\ell+\f12}
  +O(1) \notag \\
&= C^{\ell+\f12} \int_1^N x^{-\beta(\ell+\f12)} (1-(\tfrac{x}{N})^\beta)^{\ell+\f12}
  +O(1) \lb{6.26} \\
&= C^{\ell+\f12} \beta^{-1} N^{1-(\ell+\f12)\beta} \int_{N^{-\beta}}^1 u^{(\f{1}{\beta}-\ell -\f{3}{2})}
  (1-u)^{\ell+\f12}\, du + O(1) \lb{6.27} \\
&= C^{\ell+\f12} \beta^{-1} N^{1-(\ell+\f12)\beta} \int_0^1 u^{(\f{1}{\beta}-\ell -\f{3}{2})}
(1-u)^{\ell+\f12}\, du + O(1) \lb{6.28} \\
&= C^{\ell+\f12} \beta^{-1}\, \f{\Gamma (\ell+\f32)\Gamma (\f{1}{\beta}-\f12-\ell)}{\Gamma(\f{1}{\beta}+1)} \,
  N^{1-(\ell+\f12)\beta} + O(1) \notag
\end{align}

In the above, \eqref{6.26} comes from the fact that the function in the integrand is monotone decreasing,
and if $f(x)$ is monotone, then
\[
f(j) \geq \int_j^{j+1} f(y)\, dy \geq f(j+1)
\]
so
\[
\sum_{j=1}^{N-1} f(j) \geq \int_1^N f(y)\, dy \geq \sum_{j=2}^N f(j)
\]
and
\begin{equation} \lb{6.30}
\biggl| \int_1^N f(y)\, dy -\sum_{j=1}^N f(j)\biggr| \leq f(1)
\end{equation}
\eqref{6.27} is the change of variables $u=(\f{x}{N})^\beta$. Finally, \eqref{6.28} comes from
the same cancellation that occurred in \eqref{6.13}.

Since $\abs{N-C^{\f{1}{\beta}} \delta^{-\f{1}{\beta}}}\leq 1$ and $0<1-(\ell +\f12)\beta < 1$,
\begin{equation} \lb{6.31}
N^{1-(\ell+\f12)\beta} = (C^{\f{1}{\beta}} \delta^{-\f{1}{\beta}})^{1-(\ell+\f12)\beta} + o(1)
\end{equation}
Thus, we find
\begin{equation} \lb{6.32}
Q(x) =\beta^{-1} C^{\f{1}{\beta}} \sum_{0\leq \ell < (\f{1}{\beta}-\f12)} c_\ell \,
\f{\Gamma (\ell + \f32)\Gamma(\f{1}{\beta}-\f12 -\ell)}{\Gamma(\f{1}{\beta}+1)}\,
\delta^{-\f{1}{\beta}+\ell+\f12} + O(\log\delta)
\end{equation}

If $C >1$, we should not expand the power series of $\cosh^{-1}$ for small $j$ (actually, as noted,
the power series has radius of convergence $4$ so we need only worry if $C \geq 4$). Instead, we do
not expand for those $j$ with $C j^{-\beta} >1$. That is only finitely many terms, so it adds
$O(1)$ errors to $\sum_1^N \gamma_j(x)$. We add back these small $j$ terms to \eqref{6.24}, again
making $O(1)$ errors. The final result does not change.
\qed
\end{example}

Finally, we will explore examples that lead to $Q$'s roughly of the type \eqref{1.10} to link to
work of Levin--Lubinsky \cite{LL07}. We suppose
\begin{equation} \lb{6.33}
a_n =1-f(\log(n+1))
\end{equation}
where the $f$'s we have in mind are typically
\begin{equation} \lb{6.34}
f(x)=(1+x)^{-\alpha}
\end{equation}
or
\begin{equation} \lb{6.35}
f(x)=\log_k (x+c_k)
\end{equation}
an iterated log (where $c_k$ is chosen to keep all $\log$'s that enter positive). We will need

\begin{proposition}\lb{P6.4} Let $f$ be defined and $C^2$ on $[\log 2, \infty)$ and obey
\begin{alignat}{2}
&\text{\rm{(i)}} \qquad && f(x) >0, \quad f'(x) <0, \quad f''(x) >0 \lb{6.36} \\
&\text{\rm{(ii)}} \qquad && \lim_{n\to\infty}\, f(n)=0 \lb{6.37} \\
&\text{\rm{(iii)}} \qquad && \lim_{N\to\infty}\, N^\veps (-f'(\log N))^{\f12}=\infty \lb{6.38} \\
&\text{\rm{(iv)}} \qquad && \lim_{\veps\downarrow 0} \, \biggl( \limsup_{k\to\infty}\,
\f{-f'((1-\veps)k)}{-f'(k)}\biggr) =1 \lb{6.39}
\end{alignat}
Let
\begin{equation} \lb{6.40}
S_N =\sum_{j=2}^N \sqrt{f(\log j)-f(\log N)}
\end{equation}
Then
\begin{equation} \lb{6.41}
\lim_{N\to\infty}\, \f{S_N}{N(-f'(\log N))^{\f12}} = \f{\sqrt{\pi}}{2}
\end{equation}
\end{proposition}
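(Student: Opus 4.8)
The plan is to read $S_N$ as a Riemann sum after rescaling. Put $t=j/N$ and define, for $t\in(0,1]$,
\[
\rho_N(t)=\f{f(\log(tN))-f(\log N)}{-f'(\log N)},
\]
so that $S_N/(N(-f'(\log N))^{1/2})=\f1N\sum_{j=2}^N\sqrt{\rho_N(j/N)}$. Two features of $\rho_N$ drive everything. First, convexity of $f$ in (i) (the tangent line at $\log N$ lies below the graph) gives $\rho_N(t)\ge-\log t$ for every $t\in(0,1]$. Second, the mean value theorem gives $\rho_N(t)=((-f'(\xi))/(-f'(\log N)))\,(-\log t)$ for some $\xi\in(\log(tN),\log N)$; since $-f'$ is positive and decreasing by (i), hypothesis (iv) says $-f'$ varies slowly on the scale of its own argument, which forces $\rho_N(t)\to-\log t$ as $N\to\infty$. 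The target constant then appears as $\int_0^1\sqrt{-\log t}\,dt=\Gamma(\tfrac32)=\tfrac{\sqrt\pi}{2}$ via the substitution $u=-\log t$.

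For the lower bound I would combine $\rho_N(t)\ge-\log t$ with the elementary fact that for a positive, decreasing, integrable function $h$ on $(0,1]$ one has $\f1N\sum_{j=2}^N h(j/N)\to\int_0^1 h(t)\,dt$; applied to $h(t)=\sqrt{-\log t}$ this already gives $\liminf_N S_N/(N(-f'(\log N))^{1/2})\ge\tfrac{\sqrt\pi}{2}$, using no hypothesis beyond convexity.

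For the upper bound, fix $\eta>0$ and, using (iv), choose $c>0$ with $\limsup_{k\to\infty}(-f'((1-c)k))/(-f'(k))<1+\eta$; then split the sum at $j=N^{1-c}$. On the deep block $2\le j<N^{1-c}$ I would use only $0<f\le f(\log 2)$ (valid since $f$ is positive and decreasing, by (i)), which gives $\rho_N(j/N)\le f(\log 2)/(-f'(\log N))$, so this block contributes at most $\sqrt{f(\log 2)}/(N^{c}(-f'(\log N))^{1/2})$, which tends to $0$ by hypothesis (iii) with $\veps=c$. On the bulk block $N^{1-c}\le j\le N$ the mean value point satisfies $\xi\ge(1-c)\log N$, so $-f'(\xi)\le-f'((1-c)\log N)\le(1+\eta)(-f'(\log N))$ once $N$ is large, whence $\rho_N(j/N)\le(1+\eta)(-\log(j/N))$, and this block is at most $\sqrt{1+\eta}\cdot\f1N\sum_{j=2}^N\sqrt{-\log(j/N)}\to\sqrt{1+\eta}\,\tfrac{\sqrt\pi}{2}$. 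Adding the two estimates, $\limsup_N S_N/(N(-f'(\log N))^{1/2})\le\sqrt{1+\eta}\,\tfrac{\sqrt\pi}{2}$; letting $\eta\downarrow0$ completes the proof.

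The main obstacle is the deep block: there the crude bound $\rho_N(t)\le\mathrm{const}/(-f'(\log N))$ blows up because $-f'(\log N)\to0$, and it is exactly hypothesis (iii)---saying $-f'$ decays slower than any power---that rescues the estimate. A secondary delicate point is upgrading the pointwise statement $\rho_N(t)\to-\log t$ to the one-sided uniform bound $\rho_N(t)\le(1+\eta)(-\log t)$ on the bulk out of the single comparison furnished by (iv).
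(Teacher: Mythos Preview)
Your proposal is correct and follows essentially the same route as the paper's proof: the convexity lower bound $\rho_N(t)\ge -\log t$ and the Riemann-sum identification of $\int_0^1\sqrt{-\log t}\,dt=\tfrac{\sqrt\pi}{2}$ for the $\liminf$, together with the split at $j=N^{1-c}$, the crude bound $\sqrt{f(\log 2)}$ on the deep block controlled via (iii), and the mean-value/monotonicity bound $-f'(\xi)\le -f'((1-c)\log N)$ on the bulk block controlled via (iv), are exactly the paper's ingredients, only repackaged through your normalizing function $\rho_N$.
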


\begin{remark} It is easy to see that if $f(x)=e^{-kx}$ (i.e., $f(\log (n+1))\sim (n+1)^{-k}$),
then \eqref{6.41} fails. In this case, both \eqref{6.38} and \eqref{6.39} fail, but they hold
for the $f$'s of \eqref{6.34} \and \eqref{6.35}.
\end{remark}

\begin{proof} Since $(-f')' <0$ and if $x<y$,
\begin{equation} \lb{6.42}
f(x)-f(y) = \int_x^y (-f'(s))\, ds
\end{equation}
we have,
\begin{equation} \lb{6.43}
(y-x)(-f'(y))\leq f(x)-f(y)\leq (y-x)(-f'(x))
\end{equation}

We thus get a lower bound
\begin{equation} \lb{6.44}
f(\log j) -f(\log N)\geq (-f'(\log N))(-\log (\tfrac{j}{N}))
\end{equation}
so
\begin{equation} \lb{6.45}
S_N \geq N (-f'(\log N))^{\f12} \sum_{j=2}^N \tfrac{1}{N} (-\log (\tfrac{j}{N}))^{\f12}
\end{equation}
As $N\to\infty$, the sum converges to $\int_0^1 (-\log(x))^{\f12}\, dx =\f{\sqrt{\pi}}{2}$
(courtesy of Mathematica). Thus,
\begin{equation} \lb{6.46}
\liminf (\text{LHS of \eqref{6.41}})\geq \f{\sqrt{\pi}}{2}
\end{equation}

For the upper bound, fix $\veps >0$ and break $S_N=S_N^{(1)} + S_N^{(2)}$ where $S_N^{(1)}$ has
$j\leq N^{1-\veps}$ and $S_N^{(2)}$ has $j>N^{1-\veps}$. Clearly,
\begin{equation} \lb{6.47}
S_N^{(1)} \leq f(\log 2)N^{1-\veps}
\end{equation}
so, by hypothesis \eqref{6.38}, it contributes $0$ to the ratio in \eqref{6.41} as $N\to\infty$.

For $S_N^{(2)}$, we use the upper bound when $j>N^{1-\veps}$
\[
f(\log j)-f(\log N)\leq -f'((1-\veps)\log N) (-\log (\tfrac{j}{N}))
\]
which yields (since the Riemann sum still converges to the integral)
\[
\limsup(\text{LHS of \eqref{6.41}}) \leq \f{\sqrt{\pi}}{2}\, \limsup_{k\to\infty}\,
\biggl(\f{-f'((1-\veps)k)}{-f'(k)}\biggr)^{\f12}
\]
Since $\veps$ is arbitrary, we can use \eqref{6.39} to complete the proof of \eqref{6.41}.
\end{proof}

\begin{example}\lb{E6.5} Let $a_n$ have the form \eqref{6.31} where $f$ obeys all the hypotheses
of Proposition~\ref{P6.4}. By \eqref{3.2} and \eqref{3.3}, $N(x)$ roughly solves
\begin{equation} \lb{6.48}
\f{x}{1-f(\log (N+1))} =2
\end{equation}
namely,
\begin{equation} \lb{6.49}
N(x) =[\exp(f^{-1} (1-\tfrac{x}{2}))] - 1
\end{equation}

For example, if $f$ is \eqref{6.34}, then
\begin{equation} \lb{6.50}
N(x)=[\exp((1-\tfrac{x}{2})^{-\alpha}-1)] - 1
\end{equation}

Next, define $z$ by $\f{x}{2a}=1+\f{z}{2}$, namely,
\begin{equation} \lb{6.51}
z=\tfrac{x}{a} - 2
\end{equation}
where $\tfrac{x}{a} > 2$. Writing $x=2-\delta$ and $a=1-f$, we see
\begin{equation} \lb{6.52}
z=-\delta + 2f + O(f^2) + O(f\delta)
\end{equation}

Taking into account that $N(x)$ is such that
\[
2f(\log (N+2)) \leq \delta \leq 2f(\log (N+1))
\]
and that \eqref{6.19} says
\[
\cosh^{-1} (\tfrac{x}{2a})=\sqrt{z} + O(z^{\f32})
\]
we see that
\[
\gamma_j(x) = \sqrt{2f(\log (j+1)) -\delta} + O(f^{\f32}) + O(f^{\f12}\delta)
\]
and thus
\[
g(x)=\sum_{j=1}^{N(x)} \gamma_j(x)
\]
is asymptotically the same as $\sqrt{2}\,S_N$. Thus,
\begin{equation} \lb{6.48x}
\abs{Q(x)-g(x)}\leq h(x)
\end{equation}
where
\begin{equation} \lb{6.49x}
g(x) =\sqrt{\frac{\pi}{2}}\, N(x) (-f'(\log N(x)))^{\f12} (1+o(1))
\end{equation}
and
\[
h(x)=O(\log N(x)) + O(\log (1-\tfrac{2}{x}))
\]

$N(x)$ is huge, so while $\log N(x)\sim (1-\f{x}{2})^{-\alpha}$ in case \eqref{6.34}, 
it is still small relative to $g(x)$.

The reader may be puzzled in comparing our results with those of Levin--Lubinsky \cite{LL07}. 
They have no $\sqrt{\tfrac{\pi}{2}}$ and their relations (after making the modifications from 
$[-1,1]$ to $[-2,2]$) suggest
\begin{equation} \lb{6.50x}
1-a_n = (\log n)^{-\f12} (1+o(1))
\end{equation}
should correspond to
\begin{equation} \lb{6.51x}
Q(x) =\exp ((1-\tfrac{x}{2})^{-\alpha})
\end{equation}
so there is no sign of $(-f'(\log N(x)))^{\f12}$ either.

The mystery is solved by the fact that multiple $Q$'s lead to the same leading asymptotics 
for $a_n$. In their scheme, after corrections to move to $[-2,2]$, leading asymptotics for 
$f$ are given by
\begin{equation} \lb{6.52x}
n=Q(1-2(f(n)(1+o(1))))
\end{equation}
If
\begin{equation} \lb{6.53}
Q(x)=e^{1/(1-\f{x}{2})}
\end{equation}
then
\begin{equation} \lb{6.54}
n = \exp((f(n))^{-1})
\end{equation}
solved by
\begin{equation} \lb{6.55}
f(n) = \tfrac{1}{\log n}\, (1+o(1))
\end{equation}

Changing \eqref{6.53} to
\[
Q(x)=\tfrac{\pi}{2} (1-\tfrac{x}{2}) \exp ((1-\tfrac{x}{2})^{-1})
\]
is solved by
\[
f(n) = 1/(\log (\tfrac{2n}{\pi} \log n) + O(\log \log n))
\]
Since
\[
\log \tfrac{2n}{\pi}\, \log n = \log n + \log_2 n + \log (\tfrac{2}{\pi})
\]
\eqref{6.55} still holds!
\qed
\end{example}

\bigskip
%%%%%%%%%%%%%%%%%%%%%%%%%%%%%

\end{document}